\newtheorem{theorem}{Theorem}[section]
\newtheorem{lemma}[theorem]{Lemma}
\newtheorem{proposition}[theorem]{Proposition}
\newtheorem{corollary}[theorem]{Corollary}
\newtheorem{claim}[theorem]{Claim}
\theoremstyle{definition}
\newtheorem{definition}[theorem]{Definition}
\newtheorem{notation}[theorem]{Notation}
\newtheorem{remark}[theorem]{Remark}
\def\KHaus{{\mathsf{KHaus}}}
\def\Stone{{\mathsf{Stone}}}
\def\Clop{{\mathsf{Clop}}}
\def\RO{{\mathsf{RO}}}
\def\RC{{\mathsf{RC}}}
\def\ba{{\mathsf{BA}}}
\def\G{{\mathcal{G}}}
\def\int{{\mathsf{int}}}
\def\cl{{\mathsf{cl}}}
\def\dev{{\mathsf{DeV}}}
\def\R{{\mathfrak{R}}}
\def\RO{{\mathsf{RO}}}
\def\Ult{{\mathsf{Ult}}}
\def\rofunc{\mathcal{D}}
\def\Qfunc{{\mathcal{Q}}}
\def\R{\mathrel{R}}
\def\S{\mathrel{S}}
\def\T{\mathrel{T}}
\def\Q{\mathrel{Q}}
\def\E{\mathrel{E}}
\newcommand{\StoneR}{\mathsf{Stone}^{\mathsf{R}}}
\newcommand{\KHausR}{\mathsf{KHaus}^\mathsf{R}}
\newcommand{\BAS}{\mathsf{BA}^\mathsf{S}}
\newcommand{\BAQ}{\mathsf{BA}^\mathsf{Q}}
\newcommand{\Gle}{\mathsf{Gle}}
\newcommand{\GleR}{\mathsf{Gle}^\mathsf{R}}
\newcommand{\StoneER}{\mathsf{StoneE}^\mathsf{R}}
\newcommand{\baS}{\ba^\mathsf{S}}
\newcommand{\SubSfive}{\mathsf{SubS5^S}}
\newcommand{\devS}{\mathsf{DeV^S}}
\newcommand{\devF}{\mathsf{DeV^F}}
\newcommand{\id}{\mathrm{id}}
\newcommand{\StoneEf}{\mathsf{StoneE^F}}
\newcommand{\SubSfivef}{\mathsf{SubS5^F}}
\newcommand{\Rel}{\mathsf{Rel}}
\newcommand{\Map}{\mathsf{Map}}
\newcommand\conv[1]{{#1 \raisebox{-2pt}{\scalebox{1.2}{$\breve{\hspace{3.5pt}}$}}}}
\newcommand\convrs[1]{{\conv{(#1)}}}
\newcommand\convgxprime[1]{{\conv{(#1)}}}
\newcommand\inv[1]{\widehat{#1}}
\newcommand{\C}{{\mathsf{C}}}
\newcommand{\A}{{\mathsf{A}}}
\newcommand{\Eq}{\mathsf{Eq}}
\newcommand{\Split}{\mathsf{Split}}
\title[A generalization of de Vries duality to closed relations]{A generalization of de Vries duality to closed relations between compact Hausdorff spaces}
\author[M. Abbadini]{Marco Abbadini}
\address{Department of Mathematics\\
University of Salerno\\
84084 Fisciano (SA)\\
Italy}
\email{mabbadini@unisa.it}
\author[G. Bezhanishvili]{Guram Bezhanishvili}
\address{Department of Mathematical Sciences\\
New Mexico State University\\
Las Cruces NM 88003\\
USA}
\email{guram@nmsu.edu}
\author[L. Carai]{Luca Carai}
\address{Department of Philosophy\\
University of Barcelona\\
08001 Barcelona\\
Spain}
\email{luca.carai.uni@gmail.com}
\date{}
\keywords{Compact Hausdorff space, closed relation, subordination, de Vries algebra}
\subjclass[2020]{54D30, 18F70, 06E15, 18B10, 18E08}
\begin{document}

\maketitle

\begin{abstract}
	Stone duality 
	generalizes to an equivalence 
	between the categories $\StoneR$ of Stone spaces and closed relations and 
	$\BAS$ of boolean algebras and subordination relations. 
Splitting equivalences in $\StoneR$
	yields a category that is equivalent to 
	the category $\KHausR$ of compact Hausdorff spaces and closed relations. 
	Similarly, splitting equivalences in $\BAS$ yields a category that 
	is equivalent to the category $\devS$ of de Vries algebras and compatible subordination relations. Applying the machinery of allegories then yields that $\KHausR$ is equivalent to $\devS$, thus resolving a problem recently raised in the literature.
	
	The equivalence between $\KHausR$ and $\devS$ further restricts to an equivalence between the category $\KHaus$ of compact Hausdorff spaces and continuous functions and the wide subcategory $\devF$ of $\devS$ whose morphisms satisfy additional conditions.
	This yields an alternative to de Vries duality.
	One advantage of this approach is that composition of morphisms is usual relation composition.
\end{abstract}

\section{Introduction}

In \cite{deV62} de Vries generalized Stone duality to a duality for the category $\KHaus$ of compact Hausdorff spaces and continuous maps. 
The objects of the dual category $\dev$ are complete boolean algebras equipped with a proximity relation, known as de Vries algebras. 
The morphisms of $\dev$ are functions satisfying certain conditions. 
One drawback of $\dev$ is that composition of morphisms is not usual function composition.
In this paper we propose an alternative approach to de Vries duality, where morphisms between de Vries algebras become certain relations and composition is usual relation composition. 

For our purpose, it is more natural to start with the category $\KHausR$ whose objects are compact Hausdorff spaces and morphisms are closed relations (i.e., relations $R \colon X\to Y$ such that $R$ is a closed subset of $X \times Y$). This category was studied in \cite{BezhanishviliGabelaiaEtAl2019}, and earlier in \cite{JungKegelmannEtAl2001} in the more general setting of stably compact spaces. The latter paper establishes a duality for $\KHausR$ that generalizes Isbell duality \cite{Isb72} between $\KHaus$ and the category of compact regular frames and frame homomorphisms. This is obtained by generalizing the notion of a frame homomorphism to that of a preframe homomorphism. 
However, a similar duality in the language of de Vries algebras remained problematic (see \cite[Rem.~3.14]{BezhanishviliGabelaiaEtAl2019}). We resolve this problem as follows. 

By Stone duality, the category $\Stone$ of Stone spaces (zero-dimensional compact Hausdorff spaces) and continuous maps is dually equivalent to the category $\ba$ of boolean algebras and boolean homomorphisms. Halmos \cite{Hal56} generalized Stone duality by showing that continuous relations between Stone spaces dually correspond to functions $f\colon A\to B$ between boolean algebras that preserve finite meets. 
This was further generalized by Celani \cite{Cel18} to closed relations between Stone spaces and certain functions from a boolean algebra $A$ to the ideal frame $\mathcal J(B)$ of a boolean algebra $B$. We show that such functions correspond to subordination relations between $A$ and $B$ studied in \cite{BBSV17}. Consequently, we obtain that the category $\StoneR$ of Stone spaces and closed relations is equivalent to the category $\baS$ of boolean algebras and subordination relations (identity is $\le$ and composition is usual composition of relations). This equivalence is in fact an equivalence of allegories, hence self-dual categories. Thus, the choice of direction of morphisms is ultimately a matter of taste and the equivalence can alternatively be stated as a dual equivalence.

We point out that the equivalence between $\StoneR$ and $\baS$ is also a consequence of a more general result of Jung, Kurz, and Moshier \cite[Thm.~5.9]{KurzMoshierEtAl2019} who worked with order-enriched categories to show that Priestley duality extends to a dual equivalence between the category of bounded distributive lattices and subordination relations and the category of Priestley spaces and Priestley relations. 

As we pointed out above, both $\StoneR$ and $\baS$ are allegories, and the equivalence between $\StoneR$ and $\baS$ is an equivalence of allegories. Therefore, we may utilize the machinery of allegories \cite{FS90} to split equivalences in both $\StoneR$ and $\baS$. Splitting equivalences in $\StoneR$ yields the category $\StoneER$ which is equivalent to $\KHausR$. On the other hand, splitting equivalences in $\baS$ yields the category $\SubSfive$ which we show is equivalent to $\devS$. By piggybacking the equivalence of $\StoneR$ and $\baS$, we obtain an equivalence between $\StoneER$ and $\SubSfive$, which then yields our desired equivalence between $\KHausR$ and $\devS$. This resolves an open problem raised in \cite[Rem.~3.14]{BezhanishviliGabelaiaEtAl2019} (see \cref{rem:open problem}).

One drawback of $\StoneER$ is that isomorphisms are not structure-preserving bijections. However, $\StoneER$ has a full subcategory $\GleR$ of Gleason spaces \cite{BBSV17,BezhanishviliGabelaiaEtAl2019} which is more directly related to $\devS$. We prove that in both $\devS$ and $\GleR$ isomorphisms are structure-preserving bijections. 

The equivalence between $\KHausR$ and $\devS$ can also be obtained by directly generalizing the regular open functor of de Vries duality. Indeed,
associate with each compact Hausdorff space $X$ the de Vries algebra $(\RO(X),\prec)$, where $\RO(X)$ is the complete boolean algebra of regular open subsets of $X$ and $\prec$ is the de Vries proximity on $\RO(X)$ defined by $U \prec V$ iff $\cl(U) \subseteq V$.
The key is to associate with each closed relation $R \colon X \to Y$ the relation $S_R \colon \RO(X) \to \RO(Y)$ given by 
\[
U \S_R V \iff R[\cl(U)]\subseteq V
\] 
(here $R[-]$ denotes the direct image under $R$).
This gives a more explicit description of the equivalence 
between $\KHausR$ and $\devS$, which further restricts to an equivalence between $\KHaus$ and a wide subcategory $\devF$ of $\devS$, thus providing an alternative to de Vries duality. 

This paper is related to the line of research initiated by D.~Scott~\cite{Scott1982}, and further developed in \cite{LarsenWinskel1984,Hoofman1993,Vickers1993,AbramskyJung1994,JungSuenderhauf1996,JungKegelmannEtAl1999,JungKegelmannEtAl2001,Kegelmann2002,Moshier2004,KurzMoshierEtAl2019}, that uses certain relations as morphisms.
We apply the insights of these works specifically to the category of compact Hausdorff spaces and closed relations. We do so in a way that is optimal to explain the connection with de Vries duality.
The setting of compact Hausdorff spaces allows the use of simple entities such as closed equivalence relations on Stone spaces and their quotients, which are familiar to a wide range of mathematicians approaching topology from an algebraic or logical perspective.

\section{Lifting Stone duality to closed relations}

For two sets $X$ and $Y$, we write $R \colon X \to Y$ to indicate that $R$ is a relation from $X$ to $Y$. As usual, for $F\subseteq X$, we write $R[F]$ for the $R$-image of $F$ in $Y$; and for $G\subseteq Y$, we write $R^{-1}[G]$ for the $R$-inverse image of $G$ in $X$. 

If $X,Y$ are Stone spaces, then we call $R \colon X \to Y$ {\em closed} if $R$ is a closed subset of $X\times Y$ (equivalently, $R[F]$ is closed for each closed $F\subseteq X$ and $R^{-1}[G]$ is closed for each closed $G\subseteq Y$).

\begin{definition}
Let $\StoneR$ be the category of Stone spaces and closed relations between them. Identity morphisms in $\StoneR$ are identity relations and composition in $\StoneR$ is relation composition. 
\end{definition}
 
 As we pointed out in the introduction, Celani \cite{Cel18} extended Stone duality to $\StoneR$ by generalizing boolean homomorphisms to what he termed ``quasi-semi-homomorphisms." For a boolean algebra $B$, let $\mathcal{J}(B)$ be the complete lattice of ideals of $B$. 

\begin{definition} \cite[Def.~1]{Cel18}
Let $A,B$ be boolean algebras. A {\em quasi-semi-homomorphism} is a function $\Delta \colon A\to\mathcal{J}(B)$ such that $\Delta(1)=B$ and $\Delta(a\wedge b)=\Delta(a)\cap\Delta(b)$ for all $a,b\in A$.
\end{definition}

By \cite[Lem.~2]{Cel18}, boolean algebras and quasi-semi-homomorphisms between them form a category, which we denote by $\BAQ$. The identity quasi-semi-homomorphism on $A$ is given by $I_A(a)={\downarrow}a$ and the composition of two quasi-semi-homomorphisms $\Delta_1 \colon A \to B$ and $\Delta_2 \colon B \to C$ by
\[
(\Delta_2 \circ \Delta_1)(a) = \bigcup \{ \Delta_2(b) \mid b \in \Delta_1(a) \}
\]
(note that this union is an ideal because $\{ \Delta_2(b) \mid b \in \Delta_1(a) \}$ is directed).

\begin{theorem} [{\cite[Thm.~4]{Cel18}}] \label{thm: Celani}
$\StoneR$ is dually equivalent to $\BAQ$.
\end{theorem}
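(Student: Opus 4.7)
The plan is to extend ordinary Stone duality from objects to morphisms in parallel on both sides. On objects keep the Stone duality assignments $X \mapsto \Clop(X)$ and $A \mapsto \Ult(A)$. To a closed relation $R \colon X \to Y$, assign
\[
\Delta_R \colon \Clop(Y) \to \mathcal{J}(\Clop(X)), \qquad \Delta_R(V) = \{ U \in \Clop(X) \mid R[U] \subseteq V \}.
\]
To a quasi-semi-homomorphism $\Delta \colon A \to \mathcal{J}(B)$, assign the relation $R_\Delta \subseteq \Ult(B) \times \Ult(A)$ defined by $x \R_\Delta y$ iff $\Delta(a) \cap x \neq \emptyset$ implies $a \in y$ for every $a \in A$.

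Checking well-definedness is routine. The set $\Delta_R(V)$ is an ideal because $R[-]$ is monotone on subsets and $R[U_1 \cup U_2] = R[U_1] \cup R[U_2]$; the identities $\Delta_R(Y) = \Clop(X)$ and $\Delta_R(V_1 \cap V_2) = \Delta_R(V_1) \cap \Delta_R(V_2)$ are immediate. For closedness of $R_\Delta$, if $(x, y) \notin R_\Delta$ pick $a \in A \setminus y$ and $b \in \Delta(a) \cap x$; then $\widehat{b} \times \widehat{\neg a}$ is a basic clopen neighborhood of $(x, y)$ disjoint from $R_\Delta$.

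Next comes functoriality. Preservation of identities is immediate: $\Delta_{\Id_X}(V) = {\downarrow}V$, which is Celani's identity quasi-semi-homomorphism on $\Clop(X)$. Preservation of composition is the substantive step: for $R \colon X \to Y$, $S \colon Y \to Z$, and $W \in \Clop(Z)$, one must show that $U \in \Delta_{S \circ R}(W)$ iff there exists $V \in \Clop(Y)$ with $R[U] \subseteq V$ and $S[V] \subseteq W$. The nontrivial direction produces such a $V$ by clopen-separating the two disjoint closed sets $R[U]$ and $S^{-1}[Z \setminus W]$ inside the Stone space $Y$.

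It remains to establish the two natural isomorphisms that make the assignments mutually inverse. For $R \cong R_{\Delta_R}$ (through the Stone homeomorphism $X \cong \Ult(\Clop(X))$), the key is the characterization $(p, q) \in R$ iff every clopen $V \ni q$ meets $R[\{p\}]$, combined with the fact that $R[U]$ is closed for every clopen $U$ and, by compactness, intersects down to $R[\{p\}]$ as $U$ ranges over clopen neighborhoods of $p$. For $\Delta \cong \Delta_{R_\Delta}$, the key is an ultrafilter extension argument: if $b \notin \Delta(a)$, then the proper filter $B \setminus \Delta(a)$ extends to an ultrafilter $x \ni b$, and the filter $\{a' \in A : \Delta(a') \cap x \neq \emptyset\}$ (which does not contain $a$) extends to an ultrafilter $y$ with $a \notin y$, witnessing that $R_\Delta[\widehat{b}] \not\subseteq \widehat{a}$. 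The main obstacle is compositionality, where the topological structure of Stone spaces is essentially used via clopen separation of disjoint closed sets; the remaining steps are direct lifts of classical Stone duality.
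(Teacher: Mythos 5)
The paper offers no proof of this theorem: it is imported from Celani \cite[Thm.~4]{Cel18}, and the paper only records the induced constructions afterwards (\cref{rem: Clop and Uf}, \cref{thm: BAS=BAQ}, \cref{lem:StoneR and BAS allegories}). So your proposal has to be measured against that surrounding machinery rather than an in-text proof. Your functors are exactly the ones the paper obtains by composing its equivalences: your $\Delta_R(V)=\{U \in \Clop(X) \mid R[U]\subseteq V\}$ is $\Delta_{S_R}$ and your $R_\Delta$ is $R_{S_\Delta}$, and your two key separation arguments (clopen separation of disjoint closed sets in a Stone space, for compositionality and for $R=R_{\Delta_R}$; filter--ideal separation, for $\Delta=\Delta_{R_\Delta}$) are precisely the arguments the paper runs in the proof of \cref{lem:StoneR and BAS allegories}. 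The route is the intended one, and the substantive steps (disjointness of $R[U]$ and $S^{-1}[Z\setminus W]$; the compactness argument giving a clopen $U\ni p$ with $R[U]\subseteq V$ whenever $V$ is a clopen neighbourhood of $R[\{p\}]$) are correct.

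One step is wrong as written. In the argument for $\Delta=\Delta_{R_\Delta}$ you assert that $B\setminus\Delta(a)$ is a proper filter and extend it to an ultrafilter. The complement of an ideal is not a filter in general (in the four-element algebra, $B\setminus\{0\}$ contains an element together with its complement), and no ultrafilter can contain $B\setminus\Delta(a)$ unless $\Delta(a)$ is already prime. What you actually need --- and what your next sentence silently uses, since the claim that $\{a'\in A : \Delta(a')\cap x\neq\emptyset\}$ omits $a$ requires $x\cap\Delta(a)=\emptyset$ --- is an ultrafilter $x$ with $b\in x$ and $x\cap\Delta(a)=\emptyset$. This follows from the Boolean prime ideal theorem applied to the filter ${\uparrow}b$ and the ideal $\Delta(a)$, which are disjoint because $\Delta(a)$ is downward closed and $b\notin\Delta(a)$; this is exactly the form in which the paper invokes the ultrafilter lemma in \cref{lem:StoneR and BAS allegories}. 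With that repair the rest of your argument goes through: $\{a'\in A:\Delta(a')\cap x\neq\emptyset\}$ is a proper filter (closure under meets uses $\Delta(a_1\wedge a_2)=\Delta(a_1)\cap\Delta(a_2)$ and that $x$ is a filter) not containing $a$, so it extends to an ultrafilter $y$ disjoint from ${\downarrow}a$, witnessing $R_\Delta[\widehat{b}]\nsubseteq\widehat{a}$.
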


Quasi-semi-homomorphisms from A to $\mathcal J(A)$ were studied in \cite{Celani2001} under the name of quasi-modal operators. It was pointed out in \cite[Rem.~2.6]{BBSV17} that quasi-modal operators on $A$ are in one-to-one correspondence with subordinations on $A$. We show that this generalizes to a dual isomorphism between $\BAQ$ and the category of boolean algebras and subordination relations between them. 

\begin{definition} \label{def:subordination}
	\begin{enumerate}
		\item[]
		\item A \emph{subordination relation} from a boolean algebra $A$ to a boolean algebra $B$ is a relation $S \colon A \to B$ satisfying, for $a,b\in A$ and $c,d\in B$:
		\begin{enumerate}[label = (S\arabic*), ref = S\arabic*]
			\item \label{S1} $0 \S 0$ and $1 \S 1$;
			\item \label{S2} $a,b \S c$ implies $(a\vee b) \S c$; 
			\item \label{S3} $a \S c,d$ implies $a \S (c\wedge d)$;  
			\item \label{S4} $a\le b \S c\le d$ implies $a \S d$. 
		\end{enumerate}
		\item Let $\BAS$ be the category of boolean algebras and subordination relations. The identity morphism on $A$ is the order relation $\le$ on $A$, and composition in $\BAS$ is relation composition. 
	\end{enumerate}
\end{definition}

\begin{theorem} \label{thm: BAS=BAQ}
$\BAS$ is dually isomorphic to $\BAQ$.
\end{theorem}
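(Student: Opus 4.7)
The plan is to exhibit the dual isomorphism by a single explicit rule on relations. Both categories share the same objects (boolean algebras), so it suffices to set up a bijection on hom-sets
\[
\BAS(A,B) \;\longleftrightarrow\; \BAQ(B,A)
\]
that swaps identities and composition. Given a subordination $S \colon A\to B$, define $\Delta_S\colon B \to \mathcal J(A)$ by
\[
\Delta_S(b) := \{a \in A : a \S b\},
\]
and given a quasi-semi-homomorphism $\Delta\colon B\to\mathcal J(A)$, define $S_\Delta\colon A\to B$ by $a \, S_\Delta \, b \iff a \in \Delta(b)$. The key claim is that these two operations are mutually inverse bijections, and the core of the proof is checking that each lands in the correct class.

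First, I would verify that $\Delta_S$ is a quasi-semi-homomorphism. Down-closedness of each $\Delta_S(b)$ follows from \ref{S4} with $b=c$, and closure under binary joins from \ref{S2}; so $\Delta_S(b)\in\mathcal J(A)$. The equality $\Delta_S(1)=A$ comes from $1\S 1$ together with \ref{S4}, and $\Delta_S(b\wedge c)=\Delta_S(b)\cap\Delta_S(c)$ is exactly the combination of \ref{S3} (for $\supseteq$) and \ref{S4} applied to $b\wedge c\le b,c$ (for $\subseteq$). In the other direction, for $\Delta\in\BAQ(B,A)$, conditions \ref{S1}--\ref{S3} for $S_\Delta$ follow from $\Delta(1)=A$, the ideal property of $\Delta(b)$, and $\Delta(c\wedge d)=\Delta(c)\cap\Delta(d)$; the only subtle point is \ref{S4}, where I would derive monotonicity of $\Delta$ from the meet condition by writing $\Delta(b')=\Delta(b'\wedge b)=\Delta(b')\cap\Delta(b)\subseteq\Delta(b)$ whenever $b'\le b$. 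That these two constructions invert each other is immediate from their definitions.

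Finally, I would check functoriality contravariantly. The identity morphism on $A$ in $\BAS$ is $\le$, and $\Delta_{\le}(a)=\{a':a'\le a\}={\downarrow}a=I_A(a)$, the identity in $\BAQ$. For composition, given $S_1\colon A\to B$ and $S_2\colon B\to C$ in $\BAS$, unwinding definitions gives
\[
\Delta_{S_2\circ S_1}(c) \;=\; \{a : \exists b\ (a\S_1 b \S_2 c)\} \;=\; \bigcup\{\Delta_{S_1}(b) : b\in\Delta_{S_2}(c)\} \;=\; (\Delta_{S_1}\circ\Delta_{S_2})(c),
\]
matching the composition formula in $\BAQ$ with the order of the two factors swapped. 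This is exactly the contravariance required for a dual isomorphism.

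The whole argument is really a bookkeeping exercise: no step is an obstacle in the usual sense, but the one place to be careful is extracting monotonicity of $\Delta$ from the meet-preservation axiom when verifying \ref{S4}, since that axiom is the algebraic shadow of two separate clauses in the definition of a subordination. Once that is in hand, the bijection, preservation of identities, and reversal of composition all read off directly, giving the dual isomorphism $\BAS\cong\BAQ^{\op}$.
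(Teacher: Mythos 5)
Your proof is correct and follows essentially the same route as the paper: the same assignments $S \mapsto \Delta_S$ (with $\Delta_S(b) = S^{-1}[b]$) and $\Delta \mapsto S_\Delta$, the same identity and composition computations, and the same observation that the two constructions are mutually inverse; you merely fill in the ``straightforward to check'' verifications that the paper omits. (One trivial slip: down-closedness of $\Delta_S(b)$ comes from \eqref{S4} instantiated with $c=d$, not with $b=c$.)
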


\begin{proof}
For a subordination $S \colon A\to B$, define $\Delta_S \colon B \to \mathcal J(A)$ by $\Delta_S(b) = S^{-1}[b]$. It is straightforward to check that $\Delta_S$ is a well-defined quasi-semi-homomorphism. Moreover, $\Delta_{\le_A}(a) = {\downarrow}a$, so $\Delta_{\le_A} = I_A$. Furthermore, if $S_1 \colon A \to B$ and $S_2 \colon B \to C$ are subordinations, then 
\[
\Delta_{S_2 \circ S_1}(c) = (S_2 \circ S_1)^{-1}[c] = S_1^{-1}S_2^{-1}[c] 
= (\Delta_{S_1} \circ \Delta_{S_2})(c). 
\]

For a quasi-semi-homomorphism $\Delta \colon A \to B$, define $S_\Delta \colon B \to A$ by $b \S_\Delta a$ iff $b \in \Delta(a)$. Again, it is straightforward to check that $S_\Delta$ is a well-defined subordination. Moreover, $b \S_{I_A} a$ iff $b \in {\downarrow} a$ iff $b\le a$, so $S_{I_A} = {\le_A}$. Furthermore, if $\Delta_1 \colon A \to B$ and $\Delta_2 \colon B \to C$ are quasi-semi-homomorphisms, then
\begin{eqnarray*}
c \S_{\Delta_2 \circ \Delta_1} a & \Longleftrightarrow & c \in (\Delta_2 \circ \Delta_1)(a) \Longleftrightarrow c \in \bigcup \{ \Delta_2(b) \mid b \in \Delta_1(a) \} \\
& \Longleftrightarrow & \exists b\in\Delta_1(a) : c \in \Delta_2(b) \Longleftrightarrow c \ (\S_{\Delta_1} \circ \S_{\Delta_2}) \ a.
\end{eqnarray*}

In addition, for each subordination $S$, we have
$
b \S_{\Delta_S} a \Longleftrightarrow b \in \Delta_S(a) \Longleftrightarrow b \S a,
$ 
so $S_{\Delta_S} = S$. Also, for each quasi-semi-homomorphism $\Delta$, we have
$
\Delta_{S_\Delta}(a) = S_\Delta^{-1}[a] = \Delta(a),
$
and hence $\Delta_{S_\Delta}=\Delta$. 
Thus, $\BAS$ is dually isomorphic to $\BAQ$.
\end{proof}

Putting \cref{thm: Celani,thm: BAS=BAQ} together yields:

\begin{corollary} \label{cor: StoneR=BAS}
$\StoneR$ is equivalent to $\BAS$.
\end{corollary}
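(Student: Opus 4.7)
The plan is to simply compose the two results already established. \Cref{thm: Celani} provides a dual equivalence $\StoneR \simeq \BAQ^\op$, and \Cref{thm: BAS=BAQ} provides a dual isomorphism $\BAS \cong \BAQ^\op$, equivalently an isomorphism $\BAS^\op \cong \BAQ$. Reading the second result as an isomorphism $\BAQ^\op \cong \BAS$ and composing with the first dual equivalence yields the desired equivalence $\StoneR \simeq \BAS$.

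Concretely, I would proceed as follows. First, denote by $F \colon \StoneR \to \BAQ^\op$ the equivalence of categories underlying \Cref{thm: Celani} and by $G \colon \BAQ \to \BAS^\op$ the isomorphism exhibited in the proof of \Cref{thm: BAS=BAQ} (which sends $\Delta \mapsto S_\Delta$ with inverse $S \mapsto \Delta_S$). Taking opposites, $G$ gives an isomorphism $G^\op \colon \BAQ^\op \to \BAS$. Then the composite $G^\op \circ F \colon \StoneR \to \BAS$ is a composition of an equivalence with an isomorphism, hence an equivalence of categories.

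The main thing to verify is really just bookkeeping about variance: one must check that the directions of arrows compose correctly, i.e., that the dual equivalence from \Cref{thm: Celani} and the dual isomorphism from \Cref{thm: BAS=BAQ} combine into an ordinary (covariant) equivalence rather than another dual equivalence. This is where care is needed, but amounts to the observation that taking $\op$ twice is the identity. There is no genuine obstacle; the proof is essentially one sentence invoking the two previous results and the general fact that the composition of a dual equivalence with a dual isomorphism is an equivalence.
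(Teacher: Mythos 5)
Your proposal is correct and matches the paper exactly: the paper's entire proof is the phrase ``Putting \cref{thm: Celani,thm: BAS=BAQ} together yields'' the corollary, i.e., composing the dual equivalence $\StoneR \simeq \BAQ^\op$ with the dual isomorphism $\BAS \cong \BAQ^\op$. Your variance bookkeeping is the same one-line observation the authors leave implicit.
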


\begin{remark} \label{rem: Clop and Uf}
The functors establishing the equivalence of \cref{cor: StoneR=BAS} generalize the well-known clopen and ultrafilter functors $\Clop$ and $\Ult$ of Stone duality. Indeed, if
$R \colon X \to Y$ is a closed relation between Stone spaces, then composing the functors $\StoneR\to\BAQ\to\BAS$, we obtain that the corresponding subordination relation $S_R \colon \Clop(X) \to \Clop(Y)$ is given by 
	$U \S_R V$ iff 
	$R[U] \subseteq V$. 
	Conversely, if 
		$S \colon A \to B$ is a subordination relation, then composing the functors $\BAS\to\BAQ\to\StoneR$ in the other direction yields that the corresponding closed relation $R_S \colon \Ult(A) \to \Ult(B)$ is given by
	$
	x \R_S y \mbox{ iff } S[x] \subseteq y.
	$
We will slightly abuse the notation and use $\Clop$ and $\Ult$ also for the functors establishing the equivalence between $\StoneR$ and $\BAS$.
\end{remark}

\begin{remark}
That in \cref{cor: StoneR=BAS} we have an equivalence instead of a  dual equivalence
is explained by the fact that the categories $\StoneR$ and $\BAS$ are self-dual (see \cref{lem:StoneR and BAS allegories}), and hence equivalence implies dual equivalence.
\end{remark}

\begin{remark}
\cref{cor: StoneR=BAS} is also a consequence of \cite[Thm.~5.9]{KurzMoshierEtAl2019}, where a more general order-enriched duality result is established
between distributive lattices and Priestley spaces, with appropriate relations as morphisms.
\end{remark}

From now on we will work with the additional structure of allegory on $\StoneR$ and $\BAS$ \cite{FS90,Joh02}. 
In fact, it is enough to have the structure of order-enriched category with involution \cite{TsalenkoGisinEtAl1984,Lambek1999}. However, we prefer to work with allegories because their well-developed machinery is readily available to us. One could instead work with bicategories (see \cite{CW87,CV98}).

\begin{definition}\label{def:allegory}
\mbox{}\begin{enumerate}
\item{\cite[p.~105]{HST14}}\label{def:allegory:item locally posetal} A category $\C$ is \emph{order enriched} if each hom-set of $\C$ is equipped with a partial order $\le$ such that $f \le f'$ and $g \le g'$ imply $gf \le g'f'$ for all $f,f' \colon C \to D$ and $g,g' \colon D \to E$.
\item{\cite[p.~74]{HV19}} A \emph{dagger} on a category $\C$ is a contravariant endofunctor $(-)^\dagger \colon \C \to \C$ that is the identity on objects and the composition of $(-)^\dagger$ with itself is the identity on $\C$. A \emph{dagger category} is a category equipped with a dagger.
\item{\cite[p.~136]{Joh02}}\label{def:allegory:item allegory} An \emph{allegory} $\A$ is an order-enriched dagger category such that: 
\begin{enumerate}[label = (\roman*), ref = (\roman*)]
\item each partially ordered hom-set of $\A$ has binary meets;
\item $(-)^\dagger$ preserves the order on the hom-sets ($f \le g \ \Longrightarrow \ f^\dagger \le g^\dagger$);
\item the modular law holds: $g f \wedge h \le (g \wedge h f^\dagger) f$  for all $f \colon C \to D$, $g \colon D \to E$, and $h \colon C \to E$.
\end{enumerate}
\end{enumerate}
\end{definition}

We next extend the notion of equivalence of categories to allegories.

\begin{definition}\label{def:morphism of allegories}
\mbox{}\begin{enumerate}
\item{\cite[p.~142]{Joh02}}\label{def:allegory:item allegory morphism} If $F \colon \A \to \A'$ is a functor between two allegories, we say that $F$ is a \emph{morphism of allegories} if it preserves the binary meets of morphisms and commutes with $(-)^\dagger$.

\item\label{def:allegory:item allegory equivalence} A morphism of allegories is an \emph{equivalence of allegories} if it is an equivalence of categories.
\end{enumerate}
\end{definition}

\begin{remark}\label{rem:quasi-inverse allegories}
If $F \colon \mathsf{A} \to \mathsf{A'}$ is an equivalence of allegories with quasi-inverse $G \colon \mathsf{A'} \to \mathsf{A}$, then $G$ is also an equivalence of allegories.
To see this, if $f$ is an isomorphism in $\mathsf{A'}$, then $f^{-1}=f^\dagger$ (see~\cite[p.~199]{FS90}).
Therefore, since $F$ is an essentially surjective, full, and faithful functor that preserves the dagger, the proof of \cite[Lem.~V.1]{Vic11} shows that $G$ also preserves the dagger. Because $F$ is a bijection on the hom-sets that preserves meets, it is an order-isomorphism on the hom-sets. Thus, $G$ is also an order-isomorphism on the hom-sets, and hence an equivalence of allegories.
\end{remark}

\begin{lemma}\label{lem:StoneR allegory}
$\StoneR$ is an allegory.
\end{lemma}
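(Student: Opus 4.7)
The plan is to verify the four clauses of \cref{def:allegory} in turn. First, I equip each hom-set $\StoneR(X,Y)$ with the partial order of set-theoretic inclusion among closed subsets of $X \times Y$. Since relation composition is manifestly monotone in both arguments (if $R \subseteq R'$ and $S \subseteq S'$, then $SR \subseteq S'R'$), this makes $\StoneR$ order-enriched.

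Next, I define the dagger by sending a closed relation $R \colon X \to Y$ to its converse $R^\dagger := \{(y,x) \mid x \R y\}$. This is a closed subset of $Y \times X$ because it is the image of $R$ under the coordinate-swap homeomorphism $X \times Y \to Y \times X$. A routine check shows that $\id_X^\dagger = \id_X$ (the diagonal of a Hausdorff space is self-converse), $(S \circ R)^\dagger = R^\dagger \circ S^\dagger$, and $(R^\dagger)^\dagger = R$, so $(-)^\dagger$ is a well-defined dagger on $\StoneR$.

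For the three additional allegory axioms: binary meets in each hom-set are given by intersection, since the intersection of two closed subsets of $X \times Y$ is again closed; the dagger preserves inclusion trivially, as $R \subseteq S$ gives $R^\dagger \subseteq S^\dagger$; and the modular law $gf \wedge h \le (g \wedge h f^\dagger) f$ is a standard purely set-theoretic identity in the allegory $\Rel$ of all sets and relations, which transfers verbatim to $\StoneR$ because composition, intersection, and converse all preserve closedness. The only genuinely topological input — namely that the composition of closed relations between Stone spaces is again closed — is already implicit in $\StoneR$ being a well-defined category, and follows from the closed map lemma: $S \circ R$ is the image of the closed set $(R \times Z) \cap (X \times S) \subseteq X \times Y \times Z$ under the projection to $X \times Z$, which is closed because $Y$ is compact. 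Beyond this observation I expect no real obstacle.
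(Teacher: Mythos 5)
Your proposal is correct and follows the same route as the paper: order the hom-sets by inclusion, take the dagger to be the converse relation, and observe that meets are intersections and that the remaining axioms (including the modular law) are inherited from the allegory of sets and relations. The paper leaves all of these verifications as ``straightforward,'' whereas you usefully spell them out, including the closed-projection argument showing that composition preserves closedness.
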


\begin{proof}
$\StoneR$ is an order-enriched category if we order its hom-sets by inclusion.
For a relation $R \colon X \to Y$, let $\conv{R} \colon Y \to X$ be its converse defined by 
$y \mathrel{\conv{R}} x$ iff $x \R y$.
If $R$ is a closed relation, then $\conv{R}$ is also closed. The assignment $R \mapsto \conv{R}$ defines a dagger on $\StoneR$.
It is straightforward to check that all the conditions of \cref{def:allegory}\eqref{def:allegory:item allegory} hold in $\StoneR$. 
\end{proof}

\begin{theorem}\label{lem:StoneR and BAS allegories}
$\BAS$ is an allegory and $\Clop$ and $\Ult$ yield an equivalence of $\StoneR$ and $\BAS$ as allegories.
\end{theorem}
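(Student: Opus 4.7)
The plan is to transport the allegory structure of $\StoneR$ (from \cref{lem:StoneR allegory}) across the equivalence of \cref{cor: StoneR=BAS}, and then verify that the transported operations admit an explicit description in terms of subordinations.

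For the dagger, motivated by the requirement that $\Clop(\conv{R})$ should match the dagger of $\Clop(R)$, define $S^\dagger \colon B \to A$ for a subordination $S \colon A \to B$ by $b \, S^\dagger \, a \iff \neg a \, S \, \neg b$. A short check using De Morgan's laws shows that $S^\dagger$ satisfies (\ref{S1})--(\ref{S4}) and that $(-)^\dagger$ is an involutive contravariant endofunctor on $\BAS$ with $(\le_A)^\dagger = \le_A$. The chain of equivalences
\[
V \, (S_R)^\dagger \, U \iff \neg U \, S_R \, \neg V \iff R[X \setminus U] \subseteq Y \setminus V \iff R^{-1}[V] \subseteq U \iff V \, S_{\conv{R}} \, U
\]
then shows that $\Clop$ commutes with $(-)^\dagger$.

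For the order, observe from $U \, S_R \, V \iff R[U] \subseteq V$ that a smaller closed relation $R$ yields a larger subordination $S_R$, so $\Clop$ reverses set-inclusion on hom-sets. Accordingly, equip each hom-set of $\BAS$ with the \emph{reverse}-inclusion order, making $\Clop$ an order-isomorphism on hom-sets. Composition and $(-)^\dagger$ remain monotone in this order, since both are monotone for set-inclusion. Arbitrary intersections of subordinations are again subordinations (a routine check of (\ref{S1})--(\ref{S4})), so inclusion-suprema exist in $\BAS(A,B)$, and these are precisely the reverse-inclusion binary meets. Preservation of binary meets by $\Clop$ follows formally: any subordination $S$ containing $\Clop(R_1) \cup \Clop(R_2)$ has the form $S = \Clop(R)$ with $R \subseteq R_1 \cap R_2$ (using bijectivity and order-reversal of $\Clop$), so $S \supseteq \Clop(R_1 \cap R_2)$, identifying $\Clop(R_1 \cap R_2)$ as the reverse-inclusion meet of $\Clop(R_1)$ and $\Clop(R_2)$.

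Finally, since $\Clop$ preserves composition, dagger, and binary meets, the modular law in $\BAS$ transfers from $\StoneR$ (where it holds by \cref{lem:StoneR allegory}). Thus $\BAS$ is an allegory and $\Clop$ is an equivalence of allegories; by \cref{rem:quasi-inverse allegories}, $\Ult$ is one too. The main subtlety is the order-reversal inherent in $\Clop$, which forces the reverse-inclusion convention on $\BAS$ and leads to the somewhat non-obvious description of binary meets as subordination-closures of unions.
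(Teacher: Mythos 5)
Your overall strategy --- transport the allegory structure along the equivalence, using the explicit dagger $b \mathrel{S^\dagger} a \iff \neg a \mathrel{S} \neg b$ and the reverse-inclusion order --- is exactly the paper's, and your verification that $\Clop$ commutes with the dagger matches the paper's computation. But there is a genuine gap at the step where you declare $\Clop$ an ``order-isomorphism on hom-sets.'' What you actually verify is only one direction: $R_1 \subseteq R_2$ implies $S_{R_2} \subseteq S_{R_1}$. A bijection that preserves an order need not reflect it (consider the identity from a discrete order to a chain), so you still owe the converse: if $S_{R_2} \subseteq S_{R_1}$ then $R_1 \subseteq R_2$. This is the main non-formal content of the paper's proof, and it is not a routine check: given $x \mathrel{R_1} y$ with $x \mathrel{\nR_2} y$, one must use that $R_2$ is \emph{closed} and that $X,Y$ are Stone spaces to produce clopens $U \ni x$ and $V \not\ni y$ with $R_2[U] \subseteq V$, witnessing $U \mathrel{S_{R_2}} V$ but $U \mathrel{\not{S_{R_1}}} V$. (The paper also proves the analogous reflection for $\Ult$ via the ultrafilter lemma, though your appeal to \cref{rem:quasi-inverse allegories} to handle $\Ult$ is a legitimate shortcut once $\Clop$ is fully established.)

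The gap propagates: your identification of the reverse-inclusion meet of $\Clop(R_1)$ and $\Clop(R_2)$ with $\Clop(R_1 \cap R_2)$ invokes precisely this reflection when you pass from $\Clop(R) \supseteq \Clop(R_i)$ to $R \subseteq R_i$, so meet-preservation --- and hence the transfer of the modular law --- is not yet justified. Once order-reflection is proved, the rest of your argument goes through; your explicit description of binary meets in $\BAS$ as the subordination generated by the union is correct and is a detail the paper leaves implicit (it simply transports meets along the order-isomorphism), but it cannot substitute for the missing separation argument.
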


\begin{proof}
If we order the hom-sets of $\BAS$ by reverse inclusion, then $\BAS$ becomes an order-enriched category. 
For a subordination $S \colon A \to B$, define $\inv{S} \colon B \to A$ by $b \mathrel{\inv{S}} a$ iff $\neg a \S \neg b$. It is straightforward to check that $\inv{S}$ is a subordination and that the assignment $S \mapsto \inv{S}$ defines a dagger on $\BAS$.
We show that the bijections between the hom-sets induced by the functors $\Clop$ and $\Ult$ preserve and reflect the orders, and commute with the daggers.

Suppose $R_1 \subseteq R_2$. Let $U \in \Clop(X)$ and $V \in \Clop(Y)$ such that $U \S_{R_2} V$. Then $R_1[U] \subseteq R_2[U] \subseteq V$, and so $U \S_{R_1} V$. Thus, $S_{R_2} \subseteq S_{R_1}$. Conversely, assume that $R_1 \nsubseteq R_2$. Let $x \in X$ and $y \in Y$ be such that $x \R_1 y$ but $x \not\R_2 y$. Since $R_2$ is a closed relation, $R_2[x]$ is a closed subset of $Y$ and $y \notin R_2[x]$. Because $Y$ is a Stone space, there is clopen $V \subseteq Y$ such that $R_2[x] \subseteq V$ and $y \notin V$. Since $R_2$ is closed, $X \setminus R_2^{-1}[Y \setminus V]$ is an open subset of $X$ containing $x$. Therefore, there is $U \subseteq X$ clopen such that $x \in U$ and $X \setminus U \subseteq R_2^{-1}[Y \setminus V]$, so $R_2[U] \subseteq V$. Thus, $x \in U$, $y \notin V$, and $x \R_1 y$, yielding $R_1[U] \nsubseteq V$. Consequently, $U \S_{R_2} V$ but $U \not\S_{R_1} V$, which implies that $S_{R_2} \nsubseteq S_{R_1}$.

Suppose $S_1 \subseteq S_2$. Let $x \in \Ult(A)$ and $y \in \Ult(B)$ such that $x \R_{S_2} y$. Then $S_1[x] \subseteq S_2[x] \subseteq y$, and so $x \R_{S_1} y$. Thus, $R_{S_2} \subseteq R_{S_1}$. Conversely, assume that $S_1 \nsubseteq S_2$. Let $a \in A$ and $b \in B$ be such that $a \S_1 b$ but $a \not\S_2 b$. Since $S_2$ is a subordination, $S_2^{-1}[b]$ is an ideal of $A$ that $a \notin S_2^{-1}[b]$. By the ultrafilter lemma, there is $x \in \Ult(A)$ such that $a \in x$ and $x \cap S_2^{-1}[b] = \varnothing$. Therefore, $b \notin S_2[x]$ and $S_2[x]$ is a filter of $B$ because $S_2$ is a subordination. Thus, there is $y \in \Ult(B)$ such that $b \notin y$ and $S_2[x] \subseteq y$. Since $a \in x$, $b \in y$, and $a \S_1 b$, we have $S_1[x] \nsubseteq y$. Consequently, $x \R_{S_2} y$ but $x \not\R_{S_1} y$, which implies that $R_{S_2} \nsubseteq R_{S_1}$.

	The functors $\Clop$ and $\Ult$ commute with the daggers on $\StoneR$ and $\BAS$.
	Indeed, let $R\colon X\to Y$ be a morphism in $\StoneR$, $U\in\Clop(X)$, and $V\in\Clop(Y)$. 
	Then $\conv{R}[V] \subseteq U$ iff $R[X \setminus U] \subseteq Y \setminus V$. Therefore, $S_{\conv{R}}=\inv{S_R}$.	
	Also, let $S \colon A \to B$ be a morphism in $\baS$, $x\in \Ult(A)$, and $y\in \Ult(B)$. Then $\inv{S}[y] \subseteq x$ iff $S[x] \subseteq y$. Thus, $R_{\inv{S}} = \convrs{R_S}$.

Since $\StoneR$ is an allegory and
the functors $\Clop$ and $\Ult$ are quasi-inverses of each other that preserve and reflect the orders on the hom-sets and commute with the daggers, it follows that $\BAS$ is also an allegory and $\Clop$ and $\Ult$ are morphisms of allegories. Thus, $\StoneR$ and $\BAS$ are equivalent as allegories.
\end{proof}

\begin{remark}
Each hom-set $\hom_{\StoneR}(X,Y)$ is a complete lattice because it is the set of closed subsets of $X \times Y$. Thus, \cref{lem:StoneR and BAS allegories} implies that each hom-set $\hom_{\BAS}(A,B)$ is also a complete lattice.
In \cite{ABC22b} we give an explicit description of meets and joins of subordinations.
\end{remark}

\begin{remark}\label{rem:isos in StoneR and BAS}
A closed relation $R \colon X \to Y$ is an isomorphism in $\StoneR$ iff it is a homeomorphism. To see this, the inverse of $R$ in $\StoneR$ is a closed relation $Q \colon Y \to X$ such that $Q \circ R=\id_X$ and $R \circ Q=\id_Y$.
Since $\id_X$ and $\id_Y$ are identities, $R$ is a bijective function and $Q$ is its inverse. Since a function that is closed as a relation is a continuous function, we conclude that $R$ is a homeomorphism.
Therefore, by \cref{cor: StoneR=BAS}, two boolean algebras are isomorphic in $\BAS$ iff they are isomorphic as boolean algebras. 
\end{remark}

\section{Further lifting to closed relations between compact Hausdorff spaces}\label{sec:lifting equivalences}

The goal of this section is to lift the equivalence between $\StoneR$ and $\BAS$ to the category $\KHausR$ of compact Hausdorff spaces and closed relations. Guided by the fact that $\KHaus$ is the exact completion of $\Stone$ (see, e.g., \cite[Thm.~8.1]{MR20}), we think of compact Hausdorff spaces as the quotients of Stone spaces by closed equivalence relations. We will obtain a category equivalent to $\KHausR$ by splitting closed equivalence relations in $\StoneR$, a construction that we describe in the language of allegories. 

Closed equivalence relations on Stone spaces and their corresponding subordinations on boolean algebras are instances of the notion of an equivalence in an allegory.

\begin{definition}{\cite[p.~198]{FS90}}
Let $C$ be an object of an allegory. A morphism $f \colon C \to C$ is called an \emph{equivalence} if $\id_C \le f$ (\emph{reflexivity}), $f = f^\dagger$ (\emph{symmetry}), and $ff \le f$ (\emph{transitivity}).
\end{definition}

It is immediate to see that a closed relation $R$ on a Stone space is reflexive, symmetric, and transitive as a morphism in $\StoneR$ iff it is reflexive, symmetric, and transitive as a relation. Thus, $R$ is an equivalence in $\StoneR$ iff it is a closed equivalence relation. We have a similar result for subordinations.

\begin{lemma}\label{lem:equivalences StoneR and BAS}
A subordination $S \colon B \to B$ is an equivalence in $\BAS$ iff it satisfies the following conditions:
\begin{enumerate}[label = \normalfont(S\arabic*), ref = S\arabic*, start=5]
	\item \label{S5} $a \S b$ implies $a \le b$.
	\item \label{S6} $a \S b$ implies $\neg b \S \neg a$.
	\item \label{S7} $a \S b$ implies there is $c \in B$ such that $a \S c$ and $c \S b$.
\end{enumerate}
\end{lemma}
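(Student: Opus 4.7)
The plan is to unpack the notion of equivalence in the allegory $\BAS$ and translate each of the three axioms (reflexivity, symmetry, transitivity) into a condition on the subordination $S$, identifying these with (S5), (S6), (S7) respectively. Since the proof of \cref{lem:StoneR and BAS allegories} has already fixed the relevant structure on $\BAS$ — hom-sets ordered by reverse inclusion, identity on $B$ equal to $\le_B$, composition equal to relational composition, and dagger given by $b \inv{S} a \iff \neg a \S \neg b$ — the argument is essentially bookkeeping.

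First I would handle reflexivity. The condition $\id_B \le S$ in the hom-set order is, because the order is reverse inclusion, precisely the statement $S \subseteq {\le_B}$; that is, $a \S b$ implies $a \le b$, which is exactly (S5). Next for symmetry: $S = \inv{S}$ means that for all $a,b \in B$, $a \S b$ if and only if $a \inv{S} b$, i.e.\ $\neg b \S \neg a$. The forward implication is (S6), and applying (S6) to $\neg b \S \neg a$ yields $\neg\neg a \S \neg\neg b$, i.e.\ $a \S b$, which gives the converse implication; hence (S6) is equivalent to $S = \inv{S}$.

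For transitivity, $SS \le S$ in the hom-set order is, again by reverse inclusion, the statement $S \subseteq S \circ S$. Unfolding relational composition, $a \mathrel{(S \circ S)} b$ iff there exists $c \in B$ with $a \S c$ and $c \S b$, so $S \subseteq S \circ S$ is exactly (S7). Combining the three steps proves both directions of the equivalence.

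I do not expect any real obstacle; the only point that needs care is keeping track of the reverse-inclusion convention on the hom-sets of $\BAS$, which in particular makes reflexivity of the equivalence (an ``upper bound'' condition in the allegory) translate to the ``smallness'' condition (S5) on the relation, and similarly turns the transitivity inequality into an interpolation-type statement (S7) rather than a literal transitivity.
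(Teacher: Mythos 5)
Your proof is correct and follows essentially the same route as the paper: each of reflexivity, symmetry, and transitivity in the allegory $\BAS$ is translated through the reverse-inclusion ordering into (\ref{S5}), (\ref{S6}), and (\ref{S7}) respectively. The only cosmetic difference is in the symmetry step, where the paper notes that $S = \inv{S}$ iff $S \subseteq \inv{S}$ while you obtain the reverse inclusion by applying (\ref{S6}) twice; both amount to the same observation.
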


\begin{proof}
Since the hom-sets of $\BAS$ are ordered by reverse inclusion, a subordination $S$ on $B$ is a reflexive morphism in $\BAS$ iff $S$ is contained in the partial order $\le$ on $B$, which holds iff $S$ satisfies \eqref{S5}. Since $S=\inv{S}$ iff $S \subseteq \inv{S}$, it follows that $S$ is a symmetric morphism in $\BAS$ iff $S$ satisfies \eqref{S6}. Finally, $S$ is a transitive morphism in $\BAS$ iff $S \subseteq S \circ S$, which means that \eqref{S7} holds. Thus, $S$ is an equivalence in $\BAS$ iff it satisfies conditions \eqref{S5}--\eqref{S7}.

\end{proof}

\begin{definition}
An \emph{$\mathsf{S5}$-subordination} is a subordination $S \colon B \to B$ satisfying conditions \eqref{S5}--\eqref{S7}.
\end{definition}

\begin{remark}
The terminology in the above definition is motivated by the connection to the modal logic $\mathsf{S5}$, which is the logic of relational structures $(X,R)$ such that $R$ is an equivalence relation (see, e.g., \cite[Table~4.1]{BRV01}).
\end{remark}

\begin{definition}
Let $\KHausR$ be the category of compact Hausdorff spaces and closed relations between them. Identity morphisms in $\KHausR$ are identity relations and composition in $\KHausR$ is relation composition.
\end{definition}

Clearly $\StoneR$ is a full subcategory of $\KHausR$.
By arguing as we did in \cref{lem:StoneR and BAS allegories} for $\StoneR$, we obtain:

\begin{lemma}
$\KHausR$ is an allegory.
\end{lemma}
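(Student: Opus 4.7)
The plan is to mirror the proof of \cref{lem:StoneR allegory}, the only nontrivial additional point being that composition of closed relations between compact Hausdorff spaces is again a closed relation — a fact that uses compactness essentially. First I would check that $\KHausR$ is a well-defined category: given closed relations $R \colon X \to Y$ and $R' \colon Y \to Z$, the composite $R' \circ R \subseteq X \times Z$ is the image of the closed set $(R \times Z) \cap (X \times R') \subseteq X \times Y \times Z$ under the projection $\pi \colon X \times Y \times Z \to X \times Z$. Since $Y$ is compact Hausdorff, this projection is a closed map, and hence $R' \circ R$ is closed in $X \times Z$.

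Next I would endow each hom-set $\hom_{\KHausR}(X,Y)$ with the partial order of inclusion of subsets of $X \times Y$. Order-enrichment (monotonicity of composition on both sides) is immediate from the set-theoretic description of relational composition. For the dagger, I would take the converse $\conv{R} \colon Y \to X$ defined by $y \mathrel{\conv{R}} x$ iff $x \R y$; this is closed because it is the image of $R$ under the canonical homeomorphism $X \times Y \to Y \times X$, and the assignment $R \mapsto \conv{R}$ is evidently an identity-on-objects contravariant involution that preserves inclusion.

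It remains to verify the axioms of \cref{def:allegory}\eqref{def:allegory:item allegory}. Binary meets in a hom-set are given by intersection, which preserves closedness. The modular inequality $g f \wedge h \le (g \wedge h f^\dagger) f$ is a classical fact in the allegory $\Rel$ of sets and relations (see \cite[p.~196]{FS90}); since composition, converse, intersection, and inclusion in $\KHausR$ are all inherited set-theoretically from $\Rel$, the inequality transfers verbatim to closed relations between compact Hausdorff spaces.

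The only point requiring genuine work (as opposed to routine verification) is the closedness of composition, which is where the compact Hausdorff hypothesis on the objects is used via the closedness of the projection away from the middle factor. Every other step is formal and parallels the Stone case in \cref{lem:StoneR allegory}.
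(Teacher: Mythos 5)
Your proposal is correct and follows the same route as the paper, which simply invokes the argument of \cref{lem:StoneR allegory} (inclusion order, converse as dagger, intersection as meet, modular law inherited from relations on sets); you merely spell out the details the paper leaves as "straightforward." Your identification of the one genuinely non-formal point --- that composition preserves closedness because the projection away from the compact middle factor is a closed map --- is exactly right and is the only place the compact Hausdorff hypothesis is used.
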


We will show that $\KHausR$ is equivalent to the allegory obtained by splitting the equivalences in $\StoneR$.

\begin{definition}{(\cite[p.~15]{FS90})}
Let $\mathcal{E}$ be a class of idempotent morphisms of a category $\C$, where $e \colon C \to C$ is \emph{idempotent} if $ee=e$.  The category $\Split(\mathcal{E})$ obtained by \emph{splitting $\mathcal{E}$} is defined as follows. The objects of $\Split(\mathcal{E})$ are pairs $(C,e)$ where $C \in \C$ and $e \in \mathcal{E}$ with $e \colon C \to C$. A morphism $f \colon (C,e) \to (C',e')$ in $\Split(\mathcal{E})$ is a morphism $f \colon C \to C'$ in $\C$ such that $fe=f=e'f$.
\[
\begin{tikzcd}
	C \arrow{r}{f} \arrow{rd}{f} \arrow[swap]{d}{e} & C' \arrow{d}{e'}\\
	C \arrow[swap]{r}{f} & C'
\end{tikzcd}
\]
When $\mathcal{E}$ is the class of all idempotents of $\C$, then $\Split(\mathcal{E})$ is called the \emph{Karoubi envelope} or \emph{Cauchy completion} of $\C$.
\end{definition}

\begin{proposition}[{\cite[p.~211]{FS90}}]\label{prop:split is allegory}
Let $\A$ be an allegory and $\mathcal{E}$ a class of symmetric idempotent morphisms of $\A$. Then $\Split(\mathcal{E})$ inherits the structure of an allegory from $\A$. In particular, if $\mathsf{Eq}_{\A}$ is the class of all equivalences in $\A$, then $\Split(\Eq_{\A})$ is an allegory.
\end{proposition}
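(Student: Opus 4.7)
The plan is to inherit the allegory structure of $\A$ pointwise to $\Split(\mathcal{E})$: I would order each hom-set of $\Split(\mathcal{E})$ as a sub-poset of the corresponding hom-set of $\A$, define the dagger and binary meet by the same formulas as in $\A$, and then verify that this data is well-defined on $\Split(\mathcal{E})$ and satisfies the axioms of \cref{def:allegory}\eqref{def:allegory:item allegory}. Throughout, the identity morphism on $(C,e)$ in $\Split(\mathcal{E})$ is $e$ itself.

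Well-definedness of the dagger is immediate from the symmetry of the idempotents: if $f \colon (C,e) \to (C',e')$, so $f e = f = e' f$, then applying $(-)^\dagger$ and using $e^\dagger = e$, $(e')^\dagger = e'$ yields $f^\dagger e' = f^\dagger = e f^\dagger$, so $f^\dagger \colon (C',e') \to (C,e)$. The delicate step is closure under binary meets. For parallel $f,g \colon (C,e) \to (C',e')$, monotonicity of composition gives $(f \wedge g)\, e \le f e \wedge g e = f \wedge g$. For the reverse inequality, I would apply the modular law of $\A$ to the triple $(e, f, g)$:
\[
f e \wedge g \;\le\; (f \wedge g\, e^\dagger)\, e \;=\; (f \wedge g)\, e,
\]
using $e^\dagger = e$ and $g e = g$. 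Since $f e = f$, this reads $f \wedge g \le (f \wedge g)\, e$, so $(f \wedge g)\, e = f \wedge g$. The identity $e'(f \wedge g) = f \wedge g$ follows by applying the same argument to $f^\dagger, g^\dagger$ relative to $e'$ and then taking daggers, noting that $(f \wedge g)^\dagger = f^\dagger \wedge g^\dagger$ because the dagger preserves the order on hom-sets.

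The allegory axioms for $\Split(\mathcal{E})$ then transfer directly from those of $\A$: composition with morphisms preserves order by hypothesis, $(-)^\dagger$ preserves the inherited order, and the modular law in $\Split(\mathcal{E})$ is the restriction of the modular law in $\A$ once one observes that the auxiliary term $\gamma \alpha^\dagger$ appearing on the right-hand side is again a morphism of $\Split(\mathcal{E})$ by the well-definedness of composition and dagger. For the final claim, every equivalence $e$ of $\A$ is a symmetric idempotent: symmetry is by definition, while $e e = e$ follows from $e e \le e$ (transitivity) together with $e = e \cdot \id_{C} \le e \cdot e$ (reflexivity and monotonicity). Hence $\Eq_\A$ is contained in the class of symmetric idempotents, and the first part applies. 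The main obstacle throughout is the closure of $\Split(\mathcal{E})$ under binary meets, which is precisely where the modular law and the symmetry of the idempotents are essential.
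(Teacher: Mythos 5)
Your proof is correct. Note that the paper does not actually prove this proposition: it is quoted from Freyd--Scedrov \cite[p.~211]{FS90}, so there is no in-paper argument to compare against. What you have written is the standard direct verification that the reference delegates, and you correctly isolate the one genuinely nontrivial point, namely that the hom-sets of $\Split(\mathcal{E})$ are closed under the binary meets of $\A$: the inequality $(f\wedge g)e \le f\wedge g$ is mere monotonicity, while the reverse inequality $f\wedge g = fe\wedge g \le (f\wedge g e^\dagger)e = (f\wedge g)e$ is exactly an instance of the modular law combined with the symmetry $e^\dagger=e$ of the idempotent, and the left-handed identity $e'(f\wedge g)=f\wedge g$ follows by applying the dagger (which, being an order-involution, preserves binary meets, so $(f\wedge g)^\dagger = f^\dagger\wedge g^\dagger$). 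Two small points worth making explicit: the functoriality of the dagger on $\Split(\mathcal{E})$ requires $\id_{(C,e)}^\dagger = \id_{(C,e)}$, i.e.\ $e^\dagger = e$, which is a second place where symmetry of the idempotents enters beyond the well-definedness of $f\mapsto f^\dagger$ on morphisms; and in your final sentence the symbols $\gamma$ and $\alpha$ appear without having been introduced (they should be the $h$ and $f$ of the modular law as stated in \cref{def:allegory}). Neither affects the correctness of the argument, and your derivation that every equivalence is a symmetric idempotent ($ee\le e$ from transitivity, $e = e\,\id_C \le ee$ from reflexivity and monotonicity) is exactly right.
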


\begin{notation}
\mbox{}\begin{enumerate}
\item To simplify notation, we denote $\Split(\Eq_{\StoneR})$ by $\StoneER$.
Thus, an object of $\StoneER$ is a pair $(X, E)$ where $X$ is a Stone space and $E$ is a closed equivalence relation on $X$. We call such pairs \emph{$\mathsf{S5}$-subordination spaces}. 
A morphism in $\StoneER$ between $(X,E)$ and $(X',E')$ is a closed relation $R \colon X \to X'$ satisfying $R\circ E=R=E'\circ R$.
We call such relations \emph{compatible closed relations}. The composition of compatible closed relations in $\StoneER$ is the standard relation composition and the identity on $(X,E)$ is the relation $E$. 
\item Similarly, we denote $\Split(\Eq_{\BAS})$ by $\SubSfive$, so
 an object of $\SubSfive$ is a pair $(B,S)$ where $B$ is a boolean algebra and $S$ is an $\mathsf{S5}$-subordination on $B$. We call such pairs {\em $\mathsf{S5}$-subordination algebras}.\footnote{Celani~\cite{Cel05} calls these algebras \emph{quasi-monadic algebras} because they generalize Halmos' monadic algebras~\cite{Hal56}.}
A morphism in $\SubSfive$ between $(B,S)$ and $(B',S')$ is a subordination relation $T \colon B \to B'$ satisfying $T\circ S=T=S'\circ T$.
We call such subordinations \emph{compatible subordinations}. The composition of compatible subordinations in $\SubSfive$ is the standard relation composition and the identity on $(B,S)$ is the subordination $S$. 
\end{enumerate}
\end{notation}

By \cref{prop:split is allegory}, $\StoneER$ is an allegory with the hom-sets ordered by inclusion and the dagger defined as in $\StoneR$. In addition, $\SubSfive$ is an allegory with the hom-sets ordered by reverse inclusion and the dagger defined as in $\BAS$.
The following is now immediate because $\StoneR$ and $\BAS$ are equivalent as allegories (see \cref{lem:StoneR and BAS allegories}):

\begin{theorem}\label{thm:StoneER and SubSfive equivalent}
$\StoneER$ and $\SubSfive$ are equivalent allegories.
\end{theorem}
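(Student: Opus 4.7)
The plan is to lift the equivalence $(\Clop, \Ult)$ of \cref{lem:StoneR and BAS allegories} to an equivalence between $\StoneER$ and $\SubSfive$. My starting observation is that any morphism of allegories sends equivalences to equivalences: the three defining inequalities $\id_C \le e$, $e = e^\dagger$, and $ee \le e$ are preserved by any functor that preserves identities, composition, the dagger, and the order on hom-sets. In particular, $\Clop$ sends each closed equivalence $E$ on a Stone space $X$ to the $\mathsf{S5}$-subordination $S_E$ on $\Clop(X)$, and $\Ult$ does the dual.

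I would then define functors $\Clop' \colon \StoneER \to \SubSfive$ and $\Ult' \colon \SubSfive \to \StoneER$ by $(X, E) \mapsto (\Clop(X), S_E)$, $R \mapsto S_R$ (and symmetrically for $\Ult'$). Well-definedness on morphisms is immediate from functoriality: the compatibility conditions $R \circ E = R = E' \circ R$ become $S_R \circ S_E = S_R = S_{E'} \circ S_R$ under $\Clop$. The identity on $(X, E)$ in $\StoneER$ is $E$ itself, which $\Clop'$ sends to $S_E$, the identity of $(\Clop(X), S_E)$ in $\SubSfive$; composition is preserved because it coincides with composition in $\StoneR$. The allegorical structure on the splittings (order, dagger, meets) is computed as in the ambient allegory by \cref{prop:split is allegory}, so $\Clop'$ and $\Ult'$ preserve it because $\Clop$ and $\Ult$ do.

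The main step, and the only one requiring a non-formal argument, is to show that $\Clop'$ and $\Ult'$ are quasi-inverses. Let $\eta_X \colon X \to \Ult\Clop(X)$ and $\epsilon_B \colon \Clop\Ult(B) \to B$ be the natural isomorphisms of the original equivalence. By naturality of $\eta$ applied to $E$, the map $\eta_X$ intertwines $E$ with $\Ult\Clop(E) = R_{S_E}$, so $\eta_X \circ E$ is a compatible closed relation from $(X, E)$ to $(\Ult\Clop(X), R_{S_E}) = \Ult'\Clop'(X, E)$; since $\eta_X$ is an iso in $\StoneR$ with inverse equal to $\eta_X^\dagger$, a short dagger computation shows $\eta_X \circ E$ is an iso in $\StoneER$. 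The symmetric argument for $\epsilon_B$ yields the other natural isomorphism, and naturality of the lifted families is inherited from naturality of $\eta$ and $\epsilon$. Combined with the preservation of meets and dagger above, this exhibits $\Clop'$ and $\Ult'$ as an equivalence of allegories in the sense of \cref{def:morphism of allegories}.
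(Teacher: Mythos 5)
Your proposal is correct and takes essentially the same route as the paper, which simply declares the theorem ``immediate'' from the equivalence of allegories between $\StoneR$ and $\BAS$ together with the splitting construction, and records the lifted functors $(X,E)\mapsto(\Clop(X),S_E)$, $R\mapsto S_R$ in a subsequent remark. You have merely filled in the details the paper leaves implicit, and you do so correctly --- in particular your choice of $\eta_X\circ E$ (rather than $\eta_X$ itself) as the component of the natural isomorphism is exactly the right fix for the fact that identities in the split category are the equivalences $E$ rather than $\id_X$.
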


\begin{remark}\label{rem: Clop Ult StoneER and SubSfive}
The functors establishing the equivalence of allegories are obtained by lifting the functors $\Clop$ and $\Ult$ yielding the equivalence between $\StoneR$ and $\BAS$. An $\mathsf{S5}$-subordination space $(X,E)$ is mapped to $(\Clop(X), S_E)$ and a compatible closed relation $R$ to $S_R$. An $\mathsf{S5}$-subordination algebra $(B,S)$ is mapped to $(\Ult(B), R_S)$ and a compatible subordination $T$ to $R_T$.
\end{remark}

We next show that $\KHausR$ and $\StoneER$ are equivalent as allegories. We first observe that each compact Hausdorff space is homeomorphic to a quotient of a Stone space.
Indeed, any compact Hausdorff space is a quotient of its Gleason cover (see, e.g., \cite[pp.~107--108]{Joh82}). The Gleason cover is not the only possible option. In fact, it follows from the universal mapping property of Stone-\v Cech compactifications (see, e.g., \cite[Thm.~3.6.1]{Eng89}) that any compact Hausdorff space $X$ is homeomorphic to a quotient of the Stone-\v Cech compactification of the discrete space with the same underlying set as $X$. There are many other options as well.
Clearly, if a compact Hausdorff space $X$ is homeomorphic to a quotient $Y/E$ of a Stone space $Y$, then $E$ is necessarily closed (see, e.g., \cite[Thm.~3.2.11]{Eng89}). We will use this in the proof of the next theorem.

\begin{theorem}\label{t:StoneER-KHausR-equivalent}
$\KHausR$ and $\StoneER$ are equivalent allegories. 
\end{theorem}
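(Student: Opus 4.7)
The plan is to define a functor $F \colon \StoneER \to \KHausR$ and show it is an equivalence of allegories. On objects, set $F(X, E) := X/E$, which is compact Hausdorff since $E$ is a closed equivalence relation. On a compatible closed relation $R \colon (X, E) \to (X', E')$, define $\overline{R} := (q \times q')(R) \subseteq (X/E) \times (X'/E')$, where $q, q'$ denote the quotient maps; equivalently, $([x], [x']) \in \overline{R}$ iff $(x, x') \in R$, and compatibility guarantees this is well-defined. Because $X \times X'$ is compact, $R$ is closed there, and $(X/E) \times (X'/E')$ is Hausdorff, the image $\overline{R}$ is closed. The identity $E$ on $(X,E)$ descends to the diagonal of $X/E$, and compatibility of composable pairs $R, R'$ yields $\overline{R' \circ R} = \overline{R'} \circ \overline{R}$, so $F$ is a functor.

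Next I would check that $F$ is a morphism of allegories. The intersection of two compatible closed relations is again compatible and closed, and compatibility gives $\overline{R_1 \cap R_2} = \overline{R_1} \cap \overline{R_2}$; preservation of the dagger is immediate from $\conv{(q \times q')(R)} = (q' \times q)(\conv{R})$. Essential surjectivity follows from the discussion preceding the theorem: every compact Hausdorff space $Y$ is homeomorphic to a quotient $X/E$ for some Stone space $X$ and closed equivalence $E$ (e.g.\ via the Gleason cover). For fullness, given a closed relation $T \colon X/E \to X'/E'$ in $\KHausR$, set $R := (q \times q')^{-1}(T)$; then $R$ is closed as the preimage of a closed set under a continuous map, compatibility follows because $q$ and $q'$ identify points in each equivalence class, and $\overline{R} = T$ by surjectivity of $q \times q'$. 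Faithfulness is immediate: if $R$ is compatible then $R = (q \times q')^{-1}(\overline{R})$, so $\overline{R}$ determines $R$. Combined with the previous paragraph, this shows $F$ is an equivalence of allegories.

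The main bookkeeping obstacle is keeping track of the interplay between the compatibility condition $R = E' \circ R \circ E$ and the topological operations on the quotients. The technical linchpin is the identity $R = (q \times q')^{-1}(\overline{R})$, which is valid precisely when $R$ is compatible and which transports statements about closed relations on $X/E \times X'/E'$ to statements about closed relations on $X \times X'$. This allows one to transfer closedness, meet-preservation, converse-preservation, and the functorial identities between the two levels uniformly, and is what makes the verifications go through with minimal fuss.
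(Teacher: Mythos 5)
Your proposal is correct and is essentially the paper's proof: your functor $F$ coincides with the paper's $\Qfunc$, since $(q\times q')(R)=\pi'\circ R\circ\conv{\pi}$, and your key identity $R=(q\times q')^{-1}(\overline{R})$ for compatible $R$ is exactly the paper's $R=\conv{(\pi')}\circ\Qfunc(R)\circ\pi$, used in the same way for fullness and faithfulness. The only difference is stylistic: you verify the functoriality, meet, and dagger conditions elementwise via saturation under $E\times E'$, whereas the paper runs the same checks through the relational-calculus identities $E=\conv{\pi}\circ\pi$ and $\pi\circ\conv{\pi}=\id_{X/E}$.
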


\begin{proof}
Define $\Qfunc \colon \StoneER \to \KHausR$ as follows: each $(X,E) \in \StoneER$ is sent to the quotient space $X / E$, which is well known to be compact Hausdorff (see, e.g., \cite[Thm.~3.2.11]{Eng89}); and each morphism $R \colon (X,E) \to (X',E')$ in $\StoneER$ is mapped to the morphism $\Qfunc(R) \colon \Qfunc(X,E) \to \Qfunc(X',E')$ in $\KHausR$ given by $\Qfunc(R)=\pi' \circ R \circ \conv{\pi}$, where $\pi \colon X \to X/E$ and $\pi' \colon X' \to X'/E'$ are the projections onto the quotients.

We show that $\Qfunc$ preserves identities and compositions. Note that if $\pi \colon X \to X/E$ is the projection onto the quotient, then $E=\conv{\pi} \circ \pi$ and $\pi \circ \conv{\pi}= \id_{X/E}$.
Therefore, 
\[
\Qfunc(\id_{(X,E)})=\Qfunc(E)=\pi \circ E \circ \conv{\pi} = \pi \circ \conv{\pi} \circ \pi \circ \conv{\pi} =\id_{X/E}.
\] 
Let $R_1 \colon (X_1,E_1) \to (X_2,E_2)$ and $R_2 \colon (X_2,E_2) \to (X_3,E_3)$ be compatible closed relations. Then
\begin{align*}
\Qfunc(R_2) \circ \Qfunc(R_1) & = (\pi_3 \circ R_2 \circ \conv{\pi_2}) \circ (\pi_2 \circ R_1 \circ \conv{\pi_1}) = \pi_3 \circ R_2 \circ E_2 \circ R_1 \circ \conv{\pi_1}\\
 & = \pi_3 \circ R_2 \circ R_1 \circ \conv{\pi_1} = \Qfunc(R_2 \circ R_1).
\end{align*}
We prove that $\Qfunc$ is full, faithful, and essentially surjective.
Let $R' \colon X/E \to X'/E'$ be a closed relation, then $R= \conv{(\pi')} \circ R' \circ \pi$ is a compatible closed relation such that $\Qfunc(R)=R'$ because
\[
\Qfunc(R) =\Qfunc( \conv{(\pi')} \circ R' \circ \pi) = \pi' \circ \conv{(\pi')} \circ R' \circ \pi \circ \conv{\pi} = \id_{X'/E'} \circ R' \circ \id_{X/E} = R'.
\]
Therefore, $\Qfunc$ is full.
If $R \colon (X,E) \to (X',E')$ is a compatible closed relation, then $R=\conv{(\pi')} \circ \Qfunc(R) \circ \pi$ because
\[
\conv{(\pi')} \circ \Qfunc(R) \circ \pi = \conv{(\pi')} \circ \pi' \circ R \circ \conv{\pi}  \circ \pi = E' \circ R \circ E = R.
\]
Thus, if $R,R' \colon (X,E) \to (X',E')$ are two compatible closed relations such that $\Qfunc(R)=\Qfunc(R')$, then $R=\conv{(\pi')} \circ \Qfunc(R) \circ \pi = \conv{(\pi')} \circ \Qfunc(R') \circ \pi = R'$. This shows that $\Qfunc$ is faithful.
As we pointed out before the theorem, if $Y$ is a compact Hausdorff space, then there exists $(X,E) \in \StoneER$ such that $Y$ is homeomorphic to $X/E$. It follows that $\Qfunc$ is essentially surjective, hence an equivalence \cite[p.~93]{Mac98}. It remains to show that $\Qfunc$ is a morphism of allegories.

Let $R_1,R_2 \colon (X,E) \to (X',E')$ be compatible closed relations. If $R_1 \subseteq R_2$, then 
\[
\Qfunc(R_1) = \pi' \circ R_1 \circ \conv{\pi} \subseteq \pi' \circ R_2 \circ \conv{\pi} = \Qfunc(R_2).
\] 
Conversely, if $\Qfunc(R_1) \subseteq \Qfunc(R_2)$, then 
\[
R=\conv{(\pi')} \circ \Qfunc(R_1) \circ \pi \subseteq \conv{(\pi')} \circ \Qfunc(R_2) \circ \pi = R_2.
\] 
Therefore, $\Qfunc$ preserves and reflects the inclusion of relations, and so it yields an order-isomorphism on hom-sets. Finally, since $\conv{(-)}$ is a contravariant endofunctor,
$\Qfunc(\conv{R})=\pi \circ \conv{R} \circ \conv{(\pi')}$ is the converse relation of $\pi' \circ R \circ \conv{\pi} = \Qfunc(R)$. Thus, $\Qfunc$ commutes with the dagger, and hence is a morphism of allegories.
\end{proof}

\begin{remark}\label{rem:equiv from exact completion}
The following is a more categorical approach to \cref{t:StoneER-KHausR-equivalent}. If $\A$ is an allegory, then a morphism $f \colon C \to D$ in $\A$ is a \emph{map} if $f^\dagger f \le \id_C$ and $\id_D \le f f^\dagger$ (see \cite[p.~199]{FS90}). The wide subcategory of $\A$ consisting of maps is denoted by $\Map(\A)$.

If $\mathsf{C}$ is a regular category, then we can define the allegory $\Rel(\mathsf{C})$ whose objects are the same as $\mathsf{C}$ and whose morphisms are internal relations, where an \emph{internal relation} $R \colon C \to D$ is a subobject of $C \times D$ (see \cite[Sec.~A3.1]{Joh02}). Both $\KHaus$ and $\Stone$ are regular categories and their internal relations correspond to closed relations. Therefore, $\KHausR$ and $\Rel(\KHaus)$ are isomorphic allegories, and so are $\StoneR$ and $\Rel(\Stone)$. Thus, the allegories $\Split(\Eq_{\Rel(\Stone)})$ and $ \Split(\Eq_{\StoneR})$ are isomorphic. 

If $\mathsf{C}$ is a regular category, then $\Map(\Split(\Eq_{\Rel(\mathsf{C})}))$ is the effective reflection of $\mathsf{C}$ in the category of regular categories \cite[p.~213]{FS90}.
This is also known as the exact completion or ex/reg completion.\footnote{This should not be confused with the effective reflection of $\mathsf{C}$ in the category of lex categories, also known as ex/lex completion (see \href{https://ncatlab.org/nlab/show/regular+and+exact+completions}{https://ncatlab.org/nlab/show/regular+and+exact+completions}).} Roughly speaking, the exact completion is obtained by closing under quotients.
Since $\KHaus$ is the exact completion of $\Stone$ (see, e.g., \cite[Thm.~8.1]{MR20}), it follows that $\KHaus$ is equivalent to $\Map(\Split(\Eq_{\Rel(\Stone)}))$. Therefore, $\Rel(\KHaus)$ and $\Rel(\Map(\Split(\Eq_{\Rel(\Stone)})))$ are equivalent allegories (see, e.g., \cite[p.~204]{FS90}). By \cite[A3.2.10, A3.3.4(i), A3.3.9(ii)]{Joh02}, the allegories
$\Rel(\Map(\Split(\Eq_{\Rel(\Stone)})))$ and $\Split(\Eq_{\Rel(\Stone)})$ are isomorphic. 
Thus, we have the following chain of equivalences of allegories:
	\[
		\KHausR \cong \Rel(\KHaus) \simeq \Rel(\Map(\Split(\Eq_{\Rel(\Stone)}))) \cong \Split(\Eq_{\Rel(\Stone)}) \cong \Split(\Eq_{\StoneR}) = \StoneER,
	\]
where $\cong$ stands for isomorphism and $\simeq$ for equivalence of allegories.
\end{remark}

As an immediate consequence of \cref{t:StoneER-KHausR-equivalent,thm:StoneER and SubSfive equivalent} we obtain:

\begin{corollary}
$\KHausR$, $\StoneER$, and $\SubSfive$ are equivalent as allegories.
\end{corollary}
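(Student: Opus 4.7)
The plan is to simply chain together the two equivalences of allegories established just above. By \cref{thm:StoneER and SubSfive equivalent}, the pair of functors built from $\Clop$ and $\Ult$ (see \cref{rem: Clop Ult StoneER and SubSfive}) gives an equivalence of allegories $\StoneER \simeq \SubSfive$. By \cref{t:StoneER-KHausR-equivalent}, the functor $\Qfunc$ gives an equivalence of allegories $\StoneER \simeq \KHausR$. So it suffices to observe that equivalence of allegories is transitive, which then links $\KHausR$ with $\SubSfive$ via $\StoneER$.

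Concretely, I would argue as follows. The composition of two morphisms of allegories is again a morphism of allegories, since the defining conditions (functoriality, preservation of binary meets on hom-sets, and commutation with $(-)^\dagger$) are all closed under composition. Likewise, the composition of two equivalences of categories is an equivalence of categories. Combining these two facts, the composition of two equivalences of allegories is an equivalence of allegories. Moreover, by \cref{rem:quasi-inverse allegories}, a quasi-inverse of an equivalence of allegories is itself an equivalence of allegories, so we may freely invert either of the two equivalences above in order to compose them in the appropriate direction.

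Putting these pieces together, picking a quasi-inverse $G \colon \SubSfive \to \StoneER$ of the equivalence $\StoneER \to \SubSfive$ from \cref{thm:StoneER and SubSfive equivalent} and composing with $\Qfunc \colon \StoneER \to \KHausR$ from \cref{t:StoneER-KHausR-equivalent} yields an equivalence of allegories $\Qfunc \circ G \colon \SubSfive \to \KHausR$. Hence all three of $\KHausR$, $\StoneER$, and $\SubSfive$ are pairwise equivalent as allegories.

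There is essentially no obstacle here: this is a bookkeeping step whose only content is the trivial observation that the binary relation ``equivalent as allegories'' on the class of allegories is transitive. The substantive work has already been done in \cref{thm:StoneER and SubSfive equivalent,t:StoneER-KHausR-equivalent}, which is precisely why the authors phrase the corollary as an ``immediate consequence.''
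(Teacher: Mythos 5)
Your proof is correct and matches the paper exactly: the corollary is stated there as an immediate consequence of \cref{t:StoneER-KHausR-equivalent,thm:StoneER and SubSfive equivalent}, i.e.\ precisely the chaining of the two equivalences (using \cref{rem:quasi-inverse allegories} to reverse direction) that you spell out. Your extra remarks on closure of allegory morphisms under composition only make explicit what the paper leaves implicit.
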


We conclude this section by characterizing isomorphisms in $\StoneER$ and $\SubSfive$.

\begin{remark}\label{rem:isos in StoneER}\label{r:iso}
	\mbox{}\begin{enumerate}[label=\normalfont(\arabic*), ref = \arabic*]
		\item \label{i:iso}\label{i:E-iso} By \cref{t:StoneER-KHausR-equivalent}, $\Qfunc$ is full and faithful. Therefore, a morphism $R \colon (X_1, E_1) \to (X_2, E_2)$ in $\StoneER$ is an isomorphism iff  $\Qfunc(R) \colon X_1/E_1 \to X_2/E_2$ is a homeomorphism. 
		Thus, $(X_1,E_1)$ and $(X_2,E_2)$ are isomorphic in $\StoneER$ iff $X_1/E_1$ and $X_2/E_2$ are homeomorphic.
		\item \label{i:E-inverse} 
Let $R \colon (X_1, E_1) \to (X_2, E_2)$ be an isomorphism in $\StoneER$. Since inverses in an allegory are daggers (see \cite[p.~199]{FS90}) and $R^\dagger=\conv{R}$ in $\StoneER$ (see \cref{lem:StoneR allegory} and the paragraph before \cref{thm:StoneER and SubSfive equivalent}), it follows that the inverse of $R$ in $\StoneER$ is $\conv{R}$.
\item \label{i:isos StoneER and structure preserving} 	Not every isomorphism in $\StoneER$ is a structure-preserving bijection; two objects $(X_1, E_1)$ and $(X_2, E_2)$ may be isomorphic even if $X_1$ and $X_2$ do not share the same cardinality.
	However, the converse is true.
If $f \colon X_1 \to X_2$ is a structure-preserving bijection, then $f$ is a homeomorphism such that $x \E_1 y$ iff $f(x) \E_2 f(y)$. Therefore, $f$ and $f^{-1}=\conv{f}$ are compatible closed relations, and hence $f$ is an isomorphism in  $\StoneER$. 
\item\label{rem:isos in StoneER:item4} A similar result is true for $\SubSfive$: two objects $(B_1, S_1)$ and $(B_2, S_2)$ may be isomorphic even if $B_1$ and $B_2$ do not share the same cardinality.
	However, 
	every structure-preserving bijection gives rise to an isomorphism in $\SubSfive$.
	Indeed, let $(B_1,S_1),(B_2, S_2) \in \SubSfive$ and suppose there is a boolean isomorphism $f \colon B_1 \to B_2$ such that for all $a,b \in B_1$ we have $a \S_1 b$ iff $f(a) \S_2 f(b)$.
	Similarly to \cref{rem:isos in StoneR and BAS}, an isomorphism between $(B_1,S_1)$ and $(B_2, S_2)$ is given by the relation $T \colon B_1 \to B_2$ defined by 
	\[
	a \T b \iff f(a) \S_2 b \iff a \S_1 f^{-1}(b),
	\]
	and its inverse in $\SubSfive$ is the relation $Q \colon B_2 \to B_1$ defined by
	\[
	b \Q a \iff b \S_2 f(a) \iff f^{-1}(b) \S_1 a.
	\]
	By (\ref{S6}),
	\[
	b \mathrel{\inv{T}} a \iff \neg a \T \neg b \iff f(\neg a) \S_2 \neg b \iff b \S_2 f(a) \iff b \Q a.
	\]
	Thus, every structure-preserving bijection $f \colon B_1 \to B_2$ gives rise to an isomorphism $T \colon B_1 \to B_2$ in $\SubSfive$ whose inverse is $\inv{T} \colon B_2 \to B_1$.
	\end{enumerate}
\end{remark}

\section{De Vries algebras and Gleason spaces} \label{sec: main equivalences}

As we saw in \cref{sec:lifting equivalences}, $\KHausR$ is equivalent to both $\StoneER$ and $\SubSfive$. In this section we introduce two important full subcategories $\GleR$ of $\StoneER$ and $\devS$ of $\SubSfive$. The objects of $\GleR$ are Gleason spaces and those of $\devS$ are de Vries algebras. We prove that $\GleR$ and $\devS$ are also equivalent to $\KHausR$. In \cref{sec: 5} we will see that, unlike $\StoneER$ and $\SubSfive$, isomorphisms in $\GleR$ and $\devS$ are structure-preserving bijections.

For a compact Hausdorff space $X$, let $g_X \colon \widehat{X} \to X$ be the Gleason cover of $X$ (see, e.g., \cite[pp.~107--108]{Joh82}).
Let $E$ be the equivalence relation on $\widehat{X}$ given by 
\begin{equation}\label{eq:def E Gleason}
x \E y \iff g_X(x) = g_X(y).\tag{$\ast$}
\end{equation}
Since $X$ is homeomorphic to the quotient space $\widehat{X}/E$, in a certain sense, we can identify $X$ with the pair $(\widehat{X}, E)$.
This was made precise in \cite{BezhanishviliGabelaiaEtAl2019}, where an equivalence is exhibited between $\KHausR$ and the full subcategory of $\StoneER$ that we next define.

\begin{definition} \mbox{}
	\begin{enumerate}
		\item \cite[p.~368]{Eng89} A topological space is \emph{extremally disconnected} if the closure of each open set is open. 
		\item \cite[Def.~6.6]{BBSV17}
		A \emph{Gleason space} is an object $(X,E)$ of $\StoneER$ such that $X$ is extremally disconnected and $E$ is \emph{irreducible} (i.e., if $F$ is a proper closed subset of $X$, then so is $E[F]$). 
		\item \cite[Def.~3.6]{BezhanishviliGabelaiaEtAl2019} Let $\GleR$ be the full subcategory of $\StoneER$ whose objects are Gleason spaces.
	\end{enumerate}
\end{definition}

We next define the category $\devS$.

\begin{definition}
\hfill
\begin{enumerate}
\item \cite[Def.~3.2]{Bez10} A {\em de Vries algebra} is an $\mathsf{S5}$-subordination algebra $(B,S)$ such that $B$ is a complete boolean algebra and $S$ satisfies the following axiom:
	\begin{enumerate}[label = (S\arabic*), ref = S\arabic*]
		\setcounter{enumii}{7}
		\item \label{S8} If $a \neq 0$, then there is $b \neq 0$ such that $b \S a$.
	\end{enumerate}
\item Let $\devS$ be the full subcategory of $\SubSfive$ consisting of de Vries algebras.  
\end{enumerate}
\end{definition}

\begin{remark}\label{rem:prec de Vries}
A relation $S$ on a boolean algebra $B$ satisfying \eqref{S1}--\eqref{S8} is called a \emph{de Vries proximity} on $B$ and is usually denoted by $\prec$.
\end{remark}

\begin{lemma}
$\GleR$ and $\devS$ are allegories.
\end{lemma}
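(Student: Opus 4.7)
The strategy is to exploit the fact that $\GleR$ and $\devS$ are, by definition, \emph{full} subcategories of allegories ($\StoneER$ and $\SubSfive$ respectively), which have already been shown to be allegories. For a full subcategory, each hom-set coincides with the corresponding hom-set of the ambient category, so the only thing one really needs to verify is that the allegorical structure (order, identities, dagger, binary meets, modular law) restricts to the subcategory — and all of this is essentially automatic once one checks that identity morphisms lie in the subcategory and that the dagger sends objects of the subcategory to objects of the subcategory.

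In detail, I would proceed as follows. First, since the hom-sets of $\GleR$ and $\devS$ are subsets (in fact equalities) of the corresponding hom-sets in $\StoneER$ and $\SubSfive$, they inherit a partial order satisfying the order-enrichment condition of \cref{def:allegory}\eqref{def:allegory:item locally posetal}. Next, for a Gleason space $(X,E)$, the identity morphism of $(X,E)$ in $\StoneER$ is the relation $E$, which is a morphism in $\StoneER$ from $(X,E)$ to itself, hence a morphism in $\GleR$; similarly the identity on a de Vries algebra $(B,S)$ in $\SubSfive$ is $S$, which lies in $\devS$. Composition is inherited directly.

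For the dagger: the dagger on $\StoneER$ sends $R \colon (X_1,E_1) \to (X_2,E_2)$ to $\conv{R} \colon (X_2,E_2) \to (X_1,E_1)$. If both objects are Gleason spaces, then $\conv{R}$ has source and target in $\GleR$, so by fullness $\conv{R}$ is a morphism of $\GleR$; thus the dagger of $\StoneER$ restricts to a dagger on $\GleR$. The same argument, applied to the dagger of $\SubSfive$ given by $S \mapsto \inv{S}$, shows that the dagger restricts to $\devS$.

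Finally, for binary meets of morphisms and the modular law: if $R_1, R_2 \colon (X_1,E_1) \to (X_2,E_2)$ are morphisms in $\GleR$, then their meet in $\StoneER$ is again a morphism with source $(X_1,E_1)$ and target $(X_2,E_2)$, hence a morphism in $\GleR$ by fullness; so $\GleR$ has binary meets of hom-sets and dagger preserves order because this is true in the ambient allegory. The modular law, being a universally quantified inequality among morphisms, is inherited as well. The same verification applies to $\devS$ as a full subcategory of $\SubSfive$. There is no real obstacle here: the entire argument reduces to invoking fullness to ensure closure under the allegorical operations, and appealing to \cref{lem:StoneR and BAS allegories} and \cref{prop:split is allegory} for the ambient structures.
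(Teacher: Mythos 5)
Your proof is correct and takes essentially the same approach as the paper, which simply cites \cite[Examples~A.3.2.2(c)]{Joh02} for the general fact that a full subcategory of an allegory inherits the allegory structure; you have merely unwound that citation into an explicit verification. All the steps you check (inherited order, identities, restriction of the dagger, closure under binary meets, and the modular law as an inherited universally quantified inequality) are exactly what the cited result packages up.
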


\begin{proof}
By~\cite[Examples~A.3.2.2(c)]{Joh02}, every full subcategory of an allegory inherits the allegory structure.
\end{proof}

\begin{theorem} \label{i:restricts}
 The equivalence between $\StoneER$ and $\SubSfive$ restricts to an equivalence of allegories between $\GleR$ and $\devS$.
\end{theorem}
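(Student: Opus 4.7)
The plan is to show that the equivalence $\Clop \colon \StoneER \to \SubSfive$ and its quasi-inverse $\Ult$ (from \cref{rem: Clop Ult StoneER and SubSfive}) restrict to functors between the full subcategories $\GleR$ and $\devS$. Since these subcategories are full, faithfulness and fullness on hom-sets come for free, and the restricted functor will still be an equivalence of allegories (inheriting its allegorical structure) provided that the two functors send objects of one subcategory to objects of the other. So I reduce the statement to two object-level implications: (i) $(X,E) \in \GleR$ implies $(\Clop(X), S_E) \in \devS$, and (ii) $(B,S) \in \devS$ implies $(\Ult(B), R_S) \in \GleR$.

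For (i), I will use the classical Stone-duality fact that $X$ is extremally disconnected if and only if $\Clop(X)$ is a complete boolean algebra, which takes care of the underlying algebra condition. To verify \eqref{S8} for $S_E$, given a nonempty clopen $a \subseteq X$, note that $X \setminus a$ is a proper closed subset, so by irreducibility of $E$ the set $E[X \setminus a]$ is also proper closed. Its complement is then a nonempty open subset of the Stone space $X$, so it contains a nonempty clopen $b$. Since $E$ is symmetric, $b \cap E[X \setminus a] = \emptyset$ yields $E[b] \subseteq a$, that is, $b \S_E a$ with $b \neq \emptyset$, as required. For (ii), the fact that $B$ complete implies $\Ult(B)$ extremally disconnected is again classical. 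For irreducibility of $R_S$, let $F \subseteq \Ult(B)$ be proper closed. Pick a nonempty clopen $\hat a$ disjoint from $F$, so $a \neq 0$ and $F \subseteq \widehat{\neg a}$. Apply \eqref{S8} to produce $b \neq 0$ with $b \S a$, then apply \eqref{S6} to get $\neg a \S \neg b$. Using the definition $x \R_S y \iff S[x] \subseteq y$ together with $\neg a \in x$ for every $x \in F$, one checks that $\neg b \in y$ for every $y \in R_S[F]$, hence $R_S[F] \subseteq \widehat{\neg b} \subsetneq \Ult(B)$ since $b \neq 0$; closedness of $R_S[F]$ is automatic from $R_S$ being a closed relation on a compact space.

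The only mildly delicate step is the translation between the algebraic axiom \eqref{S8} and the topological notion of irreducibility; the correct dualization requires invoking \eqref{S6} to pass from a witness $b \S a$ on the ``positive'' side to a witness on the ``negative'' side where $F$ lives, and I expect that to be the one piece worth writing out carefully. With (i) and (ii) established, the functors of \cref{rem: Clop Ult StoneER and SubSfive} restrict to $\GleR \rightleftarrows \devS$, these restrictions remain quasi-inverse to one another, and they preserve meets of relations and the dagger because they did so in the ambient equivalence of \cref{thm:StoneER and SubSfive equivalent}. This gives the desired equivalence of allegories between $\GleR$ and $\devS$.
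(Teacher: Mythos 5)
Your proof is correct and follows essentially the same route as the paper: reduce to checking that $\Clop$ and $\Ult$ restrict on objects (fullness of the subcategories and the allegory structure then come for free), use Stone duality to match completeness of the boolean algebra with extremal disconnectedness of the space, and match irreducibility of the equivalence relation with axiom \eqref{S8}. The only difference is that the paper cites \cite[Lem.~6.3]{BBSV17} for the irreducibility--\eqref{S8} correspondence, whereas you prove both directions directly (correctly, including the use of symmetry of $E$ and of \eqref{S6} to move the witness to the side where the closed set lives).
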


\begin{proof}
Let $B \in \ba$. By Stone duality, $B$ is complete iff $\Ult(B)$ is extremally disconnected. Let $S$ be an $\sf{S5}$-subordination on $B$ and $R_S$ the corresponding closed equivalence relation on $\Ult(B)$. By \cite[Lem.~6.3]{BBSV17}, $R_S$ is irreducible iff $S$ satisfies \eqref{S8}. Thus, the restriction of $\Clop\colon \StoneER \to \SubSfive$ and $\Ult\colon \SubSfive \to \StoneER$ gives the desired equivalence.
\end{proof}

For a compact Hausdorff space $X$, let $\G(X)=(\widehat{X},E)$, where $\widehat{X}$ is the Gleason cover of $X$ and $E$ the corresponding equivalence relation defined in \eqref{eq:def E Gleason}. 
For a closed relation $R \colon X \to X'$, let $\G(R) \colon \G(X) \to \G(X')$ be given by $\G(R)=\convgxprime{g_{X'} } \circ R\circ g_X$. This defines a functor $\G \colon \KHausR \to \StoneER$, which is a quasi-inverse of $\Qfunc$:

\begin{theorem} \label{prop:replete}
	Each object $(X, E)$ of $\StoneER$ is isomorphic in $\StoneER$ to the Gleason space $\G(X/E)$.
	Thus, $\G$ is a quasi-inverse of $\Qfunc$ and the inclusion of $\GleR$ into $\StoneER$ is an equivalence of allegories.
\end{theorem}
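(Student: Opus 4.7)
The plan is to deduce the three parts of the statement from \cref{r:iso}(\ref{i:iso}), the definition of $\G$, and the standard properties of the Gleason cover. The key observation is that for any compact Hausdorff space $Y$, the defining equation \eqref{eq:def E Gleason} makes the $E$-classes on $\widehat{Y}$ coincide with the fibers of $g_Y$.

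For the first assertion, I would apply \cref{r:iso}(\ref{i:iso}): two objects of $\StoneER$ are isomorphic iff their quotients are homeomorphic. Writing $\G(X/E) = (\widehat{X/E}, E')$, the map $g_{X/E}$ is a continuous surjection between compact Hausdorff spaces whose fibers coincide with the $E'$-classes, so it descends to a continuous bijection $\widehat{X/E}/E' \to X/E$, which is automatically a homeomorphism. Hence $(X,E) \cong \G(X/E)$ in $\StoneER$.

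For the second assertion, the same argument shows $\Qfunc(\G(Y)) = \widehat{Y}/E \cong Y$ for each $Y \in \KHausR$. The naturality in $Y$ of this family of isomorphisms follows from a routine computation using the formula $\G(R) = \convgxprime{g_{X'}} \circ R \circ g_X$ together with $g_Y \circ \convepsy{g_Y} = \id_Y$ in $\KHausR$; this is the one computational piece of the argument and is analogous to the functoriality verification carried out in the proof of \cref{t:StoneER-KHausR-equivalent}. Since $\Qfunc$ is already known to be an equivalence of categories by \cref{t:StoneER-KHausR-equivalent}, producing a natural isomorphism $\Qfunc \circ \G \cong \id_{\KHausR}$ automatically makes $\G$ a quasi-inverse of $\Qfunc$.

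For the third assertion, the inclusion $\GleR \hookrightarrow \StoneER$ is full and faithful because $\GleR$ is by definition a full subcategory. Essential surjectivity is immediate from the first part: every $(X, E) \in \StoneER$ is isomorphic to $\G(X/E)$, and $\G(X/E)$ is a Gleason space since $\widehat{X/E}$ is extremally disconnected and $E'$ is irreducible --- both classical properties of the Gleason cover. Because $\GleR$ inherits its allegory structure from $\StoneER$, the inclusion preserves meets and the dagger, so it is an equivalence of allegories. The main obstacle, as I see it, is checking the naturality in the second assertion; the rest reduces either to a direct application of \cref{r:iso}(\ref{i:iso}) or to standard facts about the Gleason cover.
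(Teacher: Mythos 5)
Your proposal is correct and follows essentially the same route as the paper: both deduce $(X,E)\cong\G(X/E)$ from \cref{r:iso}(\ref{i:iso}) together with the homeomorphism $\widehat{X/E}/E'\cong X/E$ coming from the Gleason cover, conclude essential surjectivity of the full inclusion $\GleR\hookrightarrow\StoneER$, and defer the naturality of $\Qfunc(\G(Y))\cong Y$ to a routine computation. Your extra remarks (the continuous-bijection-between-compact-Hausdorff-spaces argument, the explicit check that $\G(X/E)$ is a Gleason space, and the observation that one natural isomorphism suffices because $\Qfunc$ is already known to be an equivalence) are all sound and only make explicit what the paper leaves implicit.
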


\begin{proof}
	Let $(X,E) \in \StoneER$ and let $(X',E')=\G(X/E)$. Then $(X',E')\in\GleR$ and $(X,E)$ is isomorphic to $(X',E')$ by \cref{rem:isos in StoneER}\eqref{i:E-iso}.
	Thus, the inclusion of $\GleR$ into $\StoneER$ is full, faithful, and essentially surjective, hence an equivalence of allegories (because it is a morphism of allegories by~\cite[Examples~A.3.2.2(c)]{Joh02}).
	Since any Gleason space is an object of $\StoneER$, what we observed above implies that any $(X,E) \in \GleR$ is isomorphic to $\G(X/E) = \G(\Qfunc(X,E))$.
	Straightforward computations show that the isomorphisms $(X,E) \cong \G(\Qfunc(X,E))$ for $(X,E) \in \GleR$ and $\Qfunc(\G(X)) \cong X$ for $X \in \KHausR$ yield natural isomorphisms $\id_{\GleR} \cong \G \circ \Qfunc$ and $\id_{\KHausR} \cong \Qfunc \circ \G$. Thus, $\G$ is a quasi-inverse of $\Qfunc$.
\end{proof}

\begin{corollary} \label{cor:all-equivalent}
$\KHausR$, $\GleR$, and $\devS$ are equivalent allegories.
\end{corollary}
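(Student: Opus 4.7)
The plan is to chain together the three equivalences of allegories already established in this section, since the corollary is essentially a bookkeeping statement. First, by \cref{prop:replete}, the inclusion $\GleR \hookrightarrow \StoneER$ is an equivalence of allegories, with quasi-inverse given (up to natural isomorphism) by the composite $\G \circ \Qfunc$. Second, by \cref{t:StoneER-KHausR-equivalent}, the functor $\Qfunc \colon \StoneER \to \KHausR$ is an equivalence of allegories. Composing these yields $\GleR \simeq \StoneER \simeq \KHausR$, and since a composition of morphisms of allegories is again a morphism of allegories (as it preserves binary meets and commutes with $(-)^\dagger$), the composite is an equivalence of allegories.

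For the second link, I would invoke \cref{i:restricts}, which tells us that the restriction of the equivalence $\StoneER \simeq \SubSfive$ (from \cref{thm:StoneER and SubSfive equivalent}) to the full subcategories $\GleR$ and $\devS$ is itself an equivalence of allegories. Chaining this with the previous step gives the full statement $\KHausR \simeq \GleR \simeq \devS$.

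I do not expect any genuine obstacle here: each of the three pieces has been proved as a theorem in this section, and equivalences of allegories compose (since they are both equivalences of categories and morphisms preserving meets and the dagger). The only thing to be careful about is to cite the correct quasi-inverses so that the composites are indeed morphisms of allegories; this is automatic from \cref{rem:quasi-inverse allegories}, which guarantees that a quasi-inverse of an equivalence of allegories is again an equivalence of allegories. The whole proof is therefore a one-line citation of \cref{t:StoneER-KHausR-equivalent}, \cref{thm:StoneER and SubSfive equivalent} (via \cref{i:restricts}), and \cref{prop:replete}.
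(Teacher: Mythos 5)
Your proof is correct and matches the paper's intent exactly: the corollary is stated without proof as an immediate consequence of \cref{t:StoneER-KHausR-equivalent}, \cref{prop:replete}, and \cref{i:restricts}, which is precisely the chain of equivalences you compose. No gaps.
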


As a consequence, we arrive at the following diagram of equivalences of allegories that commutes up to natural isomorphisms. 
\[
\begin{tikzcd}
	&[-10pt] \StoneER \arrow[r, shift left= 0.2em]{}{\Clop}  \arrow[ld, shift left= 0.2em]{}{\Qfunc} &[50pt] \SubSfive \arrow[l, shift left = 0.2em]{}{\Ult}\\
	\KHausR \arrow[ur, shift left= 0.2em]{}{\G} \arrow[dr, shift left= 0.2em]{}{\G} & & \\
	& \GleR \arrow[uu, hookrightarrow] \arrow[ul, shift left= 0.2em]{}{\Qfunc} \arrow[r, shift left= 0.2em]{}{\Clop} & \devS \arrow[uu, hookrightarrow] \arrow[l, shift left= 0.2em]{}{\Ult}
\end{tikzcd}
\]

\begin{remark}\label{rem:open problem}
That $\KHausR$ and $\GleR$ are equivalent categories was first established in \cite[Thm.~3.13]{BezhanishviliGabelaiaEtAl2019}.
By \cref{cor:all-equivalent}, $\KHausR$ is equivalent to $\devS$. This resolves the problem raised in \cite[Rem.~3.14]{BezhanishviliGabelaiaEtAl2019} to find a generalization of the category of de Vries algebras that is (dually) equivalent to $\KHausR$. The key is to work with the category $\devS$ in which functions between de Vries algebras are replaced by relations. 
\end{remark}

We conclude this section by providing an explicit description of the functor from $\KHausR$ to $\devS$ yielding the equivalence, which is a direct generalization of the regular open functor of de Vries duality.

For a compact Hausdorff space $X$, let $\RO(X)$ be the complete boolean algebra of regular open subsets of $X$. We recall that joins in $\RO(X)$ are computed as $\bigvee_{i}U_i = \int(\cl(\bigcup_i U_i))$ and the negation is computed as $\lnot U = \int(X \setminus U)$. Similarly, if $\RC(X)$ is the complete boolean algebra of regular closed subsets of $X$, then  
meets in $\RC(X)$ are computed as $\bigwedge_{i}F_i = \cl(\int(\bigcap_i F_i))$ and the negation is computed as $\lnot F = \cl(X \setminus F)$.

Parts of the next lemma are known, but it is convenient to collect all the relevant isomorphisms together.

\begin{lemma}\label{lem:iso RO(X) RC(X) Clop(GX)}
	Let $X$ be a compact Hausdorff space and $g_X \colon \widehat{X} \to X$ its Gleason cover.
	The boolean algebras $\RO(X)$, $\RC(X)$, and $\Clop(\widehat{X})$ are isomorphic, with the isomorphisms given by\\
	\begin{minipage}{0.33\textwidth}
		\begin{align*}
			\RO(X) & \longleftrightarrow \RC(X)\\
			U & \longmapsto \cl(U)\\
			\int(F) & \longmapsfrom F,
		\end{align*}
	\end{minipage}%
	\begin{minipage}{0.34\textwidth}
		\begin{align*}
			\Clop(\widehat{X}) & \longleftrightarrow \RC(X)\\
			V & \longmapsto g_X[V]\\
			\cl(g_X^{-1}[\int (F)]) & \longmapsfrom F,
		\end{align*}
	\end{minipage}%
	\begin{minipage}{0.33\textwidth}
		\begin{align*}
			\Clop(\widehat{X}) & \longleftrightarrow \RO(X)\\
			V & \longmapsto \int(g_X[V])\\
			\cl(g_X^{-1}[U]) & \longmapsfrom U.
		\end{align*}
	\end{minipage}%
\end{lemma}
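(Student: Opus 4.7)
The plan is to prove the three isomorphisms in turn, with the first being a textbook consequence of the definitions, the third arising by composition from the first two, and the middle one being the only real piece of content.

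For $\RO(X) \cong \RC(X)$, the maps $U \mapsto \cl U$ and $F \mapsto \int F$ are mutually inverse by the very definitions of regular open and regular closed ($U = \int\cl U$ for $U \in \RO(X)$ and $F = \cl\int F$ for $F \in \RC(X)$). That they preserve the boolean operations follows from the explicit formulas for joins, meets, and complements recalled in the paragraph preceding the lemma, and can be dispatched in a short paragraph.

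The substantive content is the isomorphism $\Clop(\widehat{X}) \cong \RC(X)$. I would invoke three standard properties of the Gleason cover $g_X \colon \widehat{X} \to X$: it is a continuous closed surjection, it is irreducible, and $\widehat{X}$ is extremally disconnected. Given a clopen $V \subseteq \widehat{X}$, compactness of $V$ gives that $g_X[V]$ is closed in $X$; setting $W := X \setminus g_X[\widehat{X}\setminus V]$ (open since $g_X$ is closed), one has $W \subseteq g_X[V]$, and applying irreducibility to arbitrarily small open subsets of $V$ shows $W$ is dense in $g_X[V]$. Hence $g_X[V] = \cl W \subseteq \cl\int g_X[V] \subseteq g_X[V]$, so $g_X[V] \in \RC(X)$. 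Conversely, extremal disconnectedness of $\widehat{X}$ makes $\cl g_X^{-1}[\int F]$ clopen for any $F \in \RC(X)$. For the boolean structure, I would note that $g_X[V]$ and $g_X[\widehat{X}\setminus V]$ are two regular closed sets covering $X$ and meeting only on a nowhere dense set (again by irreducibility), hence boolean complements in $\RC(X)$; joins translate to joins similarly.

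The main obstacle is the round-trip $\cl g_X^{-1}[\int g_X[V]] = V$. Here the key auxiliary fact is $\int g_X[V] = W$: the inclusion $W \subseteq \int g_X[V]$ is clear, while the reverse requires another application of irreducibility. Granted this, $g_X^{-1}[W] \subseteq V$ trivially, so $\cl g_X^{-1}[W] \subseteq V$; the reverse inclusion requires, for each $y \in V$ and each clopen neighborhood $V' \subseteq V$ of $y$, exhibiting a point of $g_X^{-1}[W] \cap V'$, which is produced by applying irreducibility to $V'$. The opposite round-trip $g_X[\cl g_X^{-1}[\int F]] = F$ is easier, using only continuity and surjectivity of $g_X$. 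Finally, $\Clop(\widehat{X}) \cong \RO(X)$ is the composition of the first two; the displayed formulas match, e.g.\ $U \mapsto \cl U \mapsto \cl g_X^{-1}[\int \cl U] = \cl g_X^{-1}[U]$.
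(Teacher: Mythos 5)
Your proposal is correct and follows the same decomposition as the paper's (sketched) proof: the isomorphism $\RO(X)\cong\RC(X)$ is immediate from the definitions, the isomorphism $\Clop(\widehat{X})\cong\RC(X)$ carries the real content, and the third is the composite of the first two. The one difference is that the paper disposes of the middle isomorphism by citing Porter and Woods for the fact that the direct image under an onto irreducible map is a boolean isomorphism $\RC(\widehat{X})\to\RC(X)$ (together with $\RC(\widehat{X})=\Clop(\widehat{X})$ by extremal disconnectedness), whereas you reprove this from scratch; your argument via $W=X\setminus g_X[\widehat{X}\setminus V]$ and repeated use of irreducibility is sound, and the round-trip identities check out (note only that the direction $g_X[\cl(g_X^{-1}[\int(F)])]=F$ uses closedness of $g_X$ in addition to continuity and surjectivity, which you do list among your hypotheses). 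What your version buys is self-containedness; what the paper's buys is brevity, consistent with its explicitly labelled proof sketch.
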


\begin{proof}(Sketch) That $\cl \colon \RO(X) \to \RC(X)$
	and $\int \colon \RC(X) \to \RO(X)$ are inverses of each other is an immediate consequence of the definition of regular open and regular closed sets.
	
	Since $g_X \colon \widehat{X} \to X$ is an onto irreducible map, the direct image function $g_X[-] \colon \RC(\widehat{X}) \to \RC(X)$ that maps $F \in \RC(\widehat{X})$ to $g_X[F]$ is a boolean isomorphism (see~\cite[p.~454]{PW88}).
	We note that $\RC(\widehat{X}) = \Clop(\widehat{X})$ because $\widehat{X}$ is extremally disconnected.
	It follows from the proof in \cite[p.~454]{PW88} that the inverse of $g_X[-]$ is given by mapping each $F \in \RC(X)$ to $\cl (g_X^{-1}[\int (F)])$.
	
	By composing the isomorphism between $\RO(X)$ and $\RC(X)$ with the isomorphism between $\RC(X)$ and $\Clop(\widehat{X})$, we obtain the isomorphism between $\RO(X)$ and $\Clop(\widehat{X})$ described in the statement.
\end{proof}

\begin{definition} \label{d:rofunc}
Let $\rofunc \colon \KHausR \to \devS$ be defined by associating with each compact Hausdorff space $X$ the de Vries algebra $(\RO(X), \prec)$ of regular open subsets of $X$ (with the de Vries proximity defined by $U \prec V$ iff $\cl(U) \subseteq V$) and with each closed relation $R \colon X \to X'$ the subordination $\rofunc(R) \colon \RO(X) \to \RO(X')$ given by
\begin{center}
$U \mathrel{\rofunc(R)} V \iff R[\cl(U)] \subseteq V$.
\end{center}
\end{definition}

It is straightforward to see that $\rofunc$ is a well-defined functor.
Recalling the functors $\G \colon \KHausR \to \GleR$ and $\Clop \colon \GleR \to \devS$, we obtain:

\begin{theorem}\label{thm: equivalence}
	The functor $\rofunc$ is naturally isomorphic to $\Clop \circ \G$. 
\end{theorem}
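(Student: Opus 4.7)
The plan is to construct a natural isomorphism $\eta \colon \rofunc \to \Clop \circ \G$ whose $X$-component comes from the canonical boolean isomorphism $\varphi_X \colon \RO(X) \to \Clop(\widehat{X})$ of \cref{lem:iso RO(X) RC(X) Clop(GX)}, given by $U \mapsto \cl(g_X^{-1}[U])$.

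First I would check that $\varphi_X$ is structure-preserving, i.e., $U \prec V$ iff $\varphi_X(U) \mathrel{S_E} \varphi_X(V)$, where $E$ is the equivalence on $\widehat{X}$ defining $\G(X)$. Using the iso $\Clop(\widehat{X}) \cong \RC(X)$ of \cref{lem:iso RO(X) RC(X) Clop(GX)}, one computes $E[\varphi_X(U)] = g_X^{-1}[g_X[\varphi_X(U)]] = g_X^{-1}[\cl(U)]$, so the target condition becomes $g_X^{-1}[\cl(U)] \subseteq \cl(g_X^{-1}[V])$. The forward implication is immediate from the continuity of $g_X$. For the converse, suppose $\cl(U) \nsubseteq V$ and pick $x \in \cl(U) \setminus V$. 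Since $V$ is regular open, $X \setminus V$ is regular closed and under $\RC(X) \cong \Clop(\widehat{X})$ corresponds to the clopen $\widehat{X} \setminus \varphi_X(V)$; as $g_X$ maps this clopen onto $X \setminus V$, there is $y \in \widehat{X} \setminus \varphi_X(V)$ with $g_X(y) = x$, and this $y$ lies in $g_X^{-1}[\cl(U)]$ but not in $\varphi_X(V) = \cl(g_X^{-1}[V])$, a contradiction. By \cref{r:iso}\eqref{rem:isos in StoneER:item4}, $\varphi_X$ then yields an isomorphism $\eta_X$ in $\devS$, namely the relation defined by $U \mathrel{\eta_X} V \iff g_X^{-1}[\cl(U)] \subseteq V$.

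For naturality, given a closed relation $R \colon X \to X'$, I would show that both $\eta_{X'} \circ \rofunc(R)$ and $\Clop(\G(R)) \circ \eta_X$ coincide with the relation $T_R \colon \RO(X) \to \Clop(\widehat{X'})$ defined by $U \mathrel{T_R} V' \iff g_{X'}^{-1}[R[\cl(U)]] \subseteq V'$. One inclusion in each equality is immediate: if $R[\cl(U)] \subseteq U'$ with $U' \in \RO(X')$ and $g_{X'}^{-1}[\cl(U')] \subseteq V'$, then $g_{X'}^{-1}[R[\cl(U)]] \subseteq V'$; and if $g_X^{-1}[\cl(U)] \subseteq V$ with $V \in \Clop(\widehat{X})$ and $g_{X'}^{-1}[R[g_X[V]]] \subseteq V'$, then applying the surjection $g_X$ yields $\cl(U) \subseteq g_X[V]$, again giving $g_{X'}^{-1}[R[\cl(U)]] \subseteq V'$. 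For the reverse inclusions, assume $g_{X'}^{-1}[R[\cl(U)]] \subseteq V'$. In the first case, $R[\cl(U)]$ and $g_{X'}[\widehat{X'} \setminus V']$ are disjoint closed subsets of $X'$, so normality of $X'$ and passing to $U' = \int \cl W$ produces a regular open $U'$ with $R[\cl(U)] \subseteq U'$ and $\cl(U') \cap g_{X'}[\widehat{X'} \setminus V'] = \emptyset$. In the second case, $\cl(U)$ and $R^{-1}[g_{X'}[\widehat{X'} \setminus V']]$ are disjoint closed subsets of $X$ (the latter is closed because $R$ is a closed relation), and normality of $X$ produces a regular open $U_0 \supseteq \cl(U)$ with $\cl(U_0) \cap R^{-1}[g_{X'}[\widehat{X'} \setminus V']] = \emptyset$; then $V = \cl(g_X^{-1}[U_0]) \in \Clop(\widehat{X})$ contains $g_X^{-1}[\cl(U)]$ and satisfies $g_X[V] = \cl(U_0)$, so $g_{X'}^{-1}[R[g_X[V]]] \subseteq V'$.

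The main obstacle is the converse in the structure-preservation step, which hinges on the correspondence $\RC(X) \cong \Clop(\widehat{X})$ to locate a point of the Gleason fiber over $x$ outside $\cl(g_X^{-1}[V])$; the two normality arguments in the naturality step are then standard once one replaces a separating open by its regular-open interior $\int \cl(\cdot)$.
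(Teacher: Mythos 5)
Your construction is the same as the paper's: the component at $X$ is the boolean isomorphism $U \mapsto \cl(g_X^{-1}[U])$ of \cref{lem:iso RO(X) RC(X) Clop(GX)}, promoted to a $\devS$-isomorphism via \cref{r:iso}\eqref{rem:isos in StoneER:item4}, and your relation $U \mathrel{\eta_X} V \iff g_X^{-1}[\cl(U)] \subseteq V$ coincides with the paper's $S_X$. The arguments are correct; where you diverge is only in how the two verifications are carried out. The paper establishes a single master formula, $\eta_X(U) \mathrel{(\Clop\,\G R)} \eta_{X'}(V) \iff R[\cl(U)] \subseteq V$ for an arbitrary closed relation $R$, by pushing everything through the isomorphisms of \cref{lem:iso RO(X) RC(X) Clop(GX)}; structure preservation is then the case $R = 1_X$, and naturality becomes a purely formal chain of equivalences using that $\rofunc(R)$ and $\Clop\,\G R$ are compatible subordinations (the interpolation being absorbed into the well-definedness of $\rofunc$, which the paper leaves as routine). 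You instead prove structure preservation by a direct fiber argument in the Gleason cover and verify the two legs of the naturality square against the auxiliary relation $T_R$ by two explicit normality-plus-$\int\cl$ interpolation arguments. Your route is more point-set and self-contained (it does not presuppose compatibility of $\rofunc(R)$); the paper's buys a shorter naturality step at the cost of front-loading the topology into one computation and into \cref{d:rofunc} being a well-defined functor.
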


\begin{proof}
	By Lemma~\ref{lem:iso RO(X) RC(X) Clop(GX)}, for every compact Hausdorff space $X$, the function $\eta_X \colon \RO(X) \to \Clop(\G X)$ that maps $U$ to $\cl(g_X^{-1}[U])$ is a boolean isomorphism.	
	
If $R \colon X \to X'$ is a closed relation, then $\G R \colon \G X \to \G X'$ is given by $\G R = \convgxprime{g_{X'} } \circ R \circ g_X$, 
so $x \mathrel{\G R} x'$ iff $g_X(x) \R g_{X'}(x')$.
Moreover, $\Clop(\G R)$ is the $\devS$-morphism
$S_{\G R} \colon \Clop(\G X) \to \Clop(\G X')$ given by
$U \S_{\G R} V$ iff $(\G R)[U] \subseteq V$. Consequently, for all $U \in \RO(X)$ and $V \in \RO(X')$ we have
\begin{align*}
\eta_X(U) \mathrel{(\Clop\,\G R)} \eta_{X'}(V) & \iff \G R[\eta_X(U)] \subseteq \eta_{X'}(V)\\
& \iff \G R[\eta_X(U)] \cap (\G X' \setminus \eta_{X'}(V)) = \varnothing\\
& \iff g_{X'}^{-1}[R[g_X[\eta_X(U)]]] \cap (\G X' \setminus \eta_{X'}(V)) = \varnothing\\
& \iff R[g_X[\eta_X(U)]] \cap g_{X'}[\G X' \setminus \eta_{X'}(V)] = \varnothing.
\end{align*}
Since $U \in \RO(X)$ and $\cl(U) \in \RC(X)$, it follows from Lemma~\ref{lem:iso RO(X) RC(X) Clop(GX)}
that
\begin{align*}
R[g_X[\eta_X(U)]] = R[g_X[\cl(g_X^{-1}[\int (\cl(U))])]] = R[\cl(U)].
\end{align*}
Because $\eta_{X'}$ is an isomorphism,
\begin{align*}
g_{X'}[\G X' \setminus \eta_{X'}(V)]
= g_{X'}[\eta_{X'}(\int(X' \setminus V))] = g_{X'}[\cl(g_{X'}^{-1}[\int(X' \setminus V)])] = X' \setminus V,
\end{align*}
where the second equality follows from Lemma~\ref{lem:iso RO(X) RC(X) Clop(GX)}
since $X' \setminus V \in \RC(X')$.
Therefore,
\begin{align*}
\eta_X(U) \mathrel{(\Clop\,\G R)} \eta_{X'}(V) \iff R[\cl(U)] \cap (X' \setminus V) = \varnothing \iff R[\cl(U)] \subseteq V.
\end{align*}
	
When $X = X'$, taking $R$ to be the identity $1_X$ on $X$, 
 we have 
\begin{equation*}
\eta_X(U) \mathrel{(\Clop \, \G 1_X)} \eta_X(V) \iff U \prec V
\end{equation*} 
 for all $U, V \in \RO(X)$.
Thus, the isomorphism $\eta_X \colon \RO(X) \to \Clop(\G X)$ is a structure-preserving bijection. By Remark~\ref{r:iso}\eqref{rem:isos in StoneER:item4},
$\RO(X)$ and $\Clop(\G X)$ are isomorphic in $\devS$, and
the isomorphism is given by the relation $S_X \colon \RO(X) \to \Clop(\G{X})$ defined by 
\begin{align*}
U \S_X V \iff \eta_X(U) \mathrel{(\Clop \, \G 1_X)} V \iff U \prec \eta_X^{-1}(V)
\end{align*}
 (see Remark~\ref{r:iso}\eqref{rem:isos in StoneER:item4} and Lemma~\ref{lem:iso RO(X) RC(X) Clop(GX)}).

To prove naturality, we show that the following diagram commutes for every morphism $R \colon X \to X'$ in $\KHaus$.
\[
\begin{tikzcd}
\RO(X) \arrow{r}{S_X} \arrow[swap]{d}{\rofunc(R)} & \Clop(\G{X}) \arrow{d}{\Clop \, \G R}\\
\RO(X') \arrow{r}{S_{X'}} & \Clop(\G{X'})
\end{tikzcd}
\]
Let $U \in \RO(X)$ and $V \in \Clop(\G{X'})$.
Since $\Clop\,\G R$ and $\rofunc(R)$ are compatible subordinations, we have
\begin{align*}
U \mathrel{(S_{X'} \circ \rofunc(R))} V & \iff \exists C \in \RO(X') : U \mathrel{\rofunc(R)} C \S_{X'} V \\	
& \iff \exists C \in \RO(X') : U \mathrel{\rofunc(R)} C \prec \eta_{X'}^{-1}(V) \\	
& \iff U \mathrel{\rofunc(R)} \eta_{X'}^{-1}(V) \\
& \iff \eta_X(U) \mathrel{(\Clop \, \G R)} V \\
& \iff \exists D \in \Clop(\G X) : \eta_X(U) \mathrel{(\Clop \, \G 1_X)} D \mathrel{(\Clop \, \G R)} V \\
& \iff \exists D \in \Clop(\G X) : U \S_X D \mathrel{(\Clop \, \G R)} V\\
& \iff U \mathrel{((\Clop\,\G R) \circ S_X)} V. \qedhere
\end{align*}
\end{proof}

We thus obtain:

\begin{corollary} \label{c:equiv-explicit}
	The functor $\rofunc \colon \KHausR \to \devS$ is an equivalence of allegories.
\end{corollary}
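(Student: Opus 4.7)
The plan is to obtain \cref{c:equiv-explicit} immediately from \cref{thm: equivalence} together with the chain of equivalences established earlier. Since \cref{thm: equivalence} supplies a natural isomorphism $\rofunc \cong \Clop \circ \G$, it suffices to show that the composite $\Clop \circ \G \colon \KHausR \to \devS$ is an equivalence of allegories, and then to transfer this property along the natural isomorphism.

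For the composite, \cref{prop:replete} identifies $\G \colon \KHausR \to \GleR$ as a quasi-inverse of $\Qfunc$. Since $\Qfunc$ is an equivalence of allegories (by combining \cref{t:StoneER-KHausR-equivalent} with the equivalence $\GleR \simeq \StoneER$ from \cref{prop:replete}), \cref{rem:quasi-inverse allegories} ensures that $\G$ is itself an equivalence of allegories. By \cref{i:restricts}, the restriction $\Clop \colon \GleR \to \devS$ is likewise an equivalence of allegories, and composing two equivalences of allegories yields another.

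For the transfer, I would note first that natural isomorphism preserves essential surjectivity, fullness, and faithfulness, so $\rofunc$ inherits them from $\Clop \circ \G$. For the allegorical structure, each component $\eta_X$ of the natural isomorphism in \cref{thm: equivalence} is an isomorphism in $\devS$, whose inverse coincides with its dagger by \cite[p.~199]{FS90}. Writing $\rofunc(R)$ as $\eta_{X'}^{-1} \circ (\Clop \circ \G)(R) \circ \eta_X$, commutation with $(-)^\dagger$ and preservation of binary meets of morphisms then follow from the fact that conjugation by an isomorphism in an allegory commutes with the dagger and distributes over binary meets of hom-sets.

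The only (minor) obstacle is this last observation about conjugation by isomorphisms. It is a standard consequence of the allegory axioms: isomorphisms are in particular maps, so composition with them distributes over binary meets via the modular law in \cref{def:allegory}. This is the one place where the allegorical---rather than merely categorical---structure must be explicitly invoked.
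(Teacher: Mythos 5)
Your proposal is correct and follows the same route as the paper: both obtain the result from \cref{thm: equivalence} by noting that $\rofunc$ is naturally isomorphic to $\Clop \circ \G$, each factor being an equivalence of allegories (the paper cites \cref{rem: Clop Ult StoneER and SubSfive,prop:replete}, where you invoke \cref{i:restricts} and \cref{rem:quasi-inverse allegories}). Your explicit verification that the allegory-morphism property transfers along a natural isomorphism---via conjugation by the isomorphisms $\eta_X$, whose inverses are their daggers---is a correct filling-in of a step the paper leaves implicit.
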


\begin{proof}
	By Theorem~\ref{thm: equivalence}, $\rofunc$ is naturally isomorphic to the composition $\Clop \circ \G$, each of which is an equivalence of allegories (see \cref{rem: Clop Ult StoneER and SubSfive,prop:replete}).
\end{proof}

\section{Isomorphisms in $\devS$ and $\GleR$}\label{sec: 5}

It follows from \cref{rem:isos in StoneER} that isomorphisms in $\SubSfive$ and $\StoneER$ are not structure-preserving bijections. 
In this section we show that in $\devS$ and $\GleR$ isomorphisms become structure-preserving bijections, thus making the latter categories more convenient to work with. 

For a subset $E$ of a boolean algebra $B$, we write $U(E)$ and $L(E)$ for the sets of upper and lower bounds of $E$, respectively. 
We will freely use the fact that in a de Vries algebra $(A,S)$ we have $a=\bigwedge S[a]=\bigvee S^{-1}[a]$ for every $a \in A$.

\begin{lemma} \label{lem:two-sorted-irreducibility} \label{lem:normal-pair}
Let $(A, S_A)$ and $(B, S_B)$ be isomorphic objects in $\SubSfive$ with $T \colon A \to B$ and $Q \colon B \to A$ inverses of each other. Suppose that $(A, S_A)$ is a de Vries algebra.
\begin{enumerate}[label=\normalfont(\arabic*), ref = \arabic*]
\item\label{1} For all $a_1, a_2 \in A$, we have
\[
a_1 \leq a_2 \iff T[a_1] \supseteq T[a_2] \iff Q^{-1}[a_1] \subseteq Q^{-1}[a_2].
\]
\item\label{2} For all $a \in A$, we have
\begin{align*}
Q^{-1}[a] = S_B^{-1}[L(T[a])] \qquad \mbox{and} \qquad T[a] = S_B[U(Q^{-1}[a])].
\end{align*}
\end{enumerate}
\end{lemma}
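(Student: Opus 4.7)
The plan is to combine three sets of facts: the compatibility identities $T = T \circ S_A = S_B \circ T$ and $Q = S_A \circ Q = Q \circ S_B$ (which encode that $T$ and $Q$ are morphisms in $\SubSfive$); the inverse identities $Q \circ T = S_A$ and $T \circ Q = S_B$ (from $T$ and $Q$ being mutually inverse in $\SubSfive$); and the de Vries completeness formulas $a = \bigwedge S_A[a] = \bigvee S_A^{-1}[a]$, used together with \eqref{S5} to convert $S_A$-comparisons into ordinary inequalities.

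For part (1), the implications $a_1 \leq a_2 \Rightarrow T[a_1] \supseteq T[a_2]$ and $a_1 \leq a_2 \Rightarrow Q^{-1}[a_1] \subseteq Q^{-1}[a_2]$ will follow immediately from \eqref{S4}. For the converse $T[a_1] \supseteq T[a_2] \Rightarrow a_1 \leq a_2$, I will fix an arbitrary $a' \in S_A[a_2]$ and decompose $a_2 \S_A a'$ via $Q \circ T = S_A$ to produce $b \in B$ with $a_2 \T b \Q a'$; the hypothesis gives $b \in T[a_1]$, so $a_1 \T b \Q a'$ witnesses $a_1 \S_A a'$, whence $a_1 \leq a'$ by \eqref{S5}. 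Taking the meet over $a' \in S_A[a_2]$ yields $a_1 \leq a_2$. The implication $Q^{-1}[a_1] \subseteq Q^{-1}[a_2] \Rightarrow a_1 \leq a_2$ is dual, using $a_1 = \bigvee S_A^{-1}[a_1]$ in place of $a_2 = \bigwedge S_A[a_2]$.

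For part (2), to prove $Q^{-1}[a] = S_B^{-1}[L(T[a])]$, the inclusion $\subseteq$ goes as follows: given $b \Q a$, use $Q = Q \circ S_B$ to pick $b'$ with $b \S_B b' \Q a$; then for every $c \in T[a]$, the composite $b' \Q a \T c$ yields $b' \S_B c$ by $T \circ Q = S_B$, and hence $b' \leq c$ by \eqref{S5}. Thus $b' \in L(T[a])$, as required. For $\supseteq$, given $b \S_B b'$ with $b' \in L(T[a])$, I will decompose $b \S_B b'$ via $T \circ Q = S_B$ as $b \Q a''' \T b'$; it then suffices by \eqref{S4} to show $a''' \leq a$. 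To obtain this, fix $a'' \in S_A[a]$, decompose $a \S_A a''$ via $Q \circ T$ as $a \T b'' \Q a''$, observe that $b'' \in T[a]$ forces $b' \leq b''$ and hence $b' \Q a''$ by \eqref{S4}; then $a''' \T b' \Q a''$ yields $a''' \S_A a''$, so $a''' \leq a''$ by \eqref{S5}, and taking the meet gives $a''' \leq a$. The identity $T[a] = S_B[U(Q^{-1}[a])]$ will be proved by an entirely analogous argument using $a = \bigvee S_A^{-1}[a]$ and $T = S_B \circ T$.

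The main obstacle will be keeping straight the four compatibility identities and their composition directions---particularly recognizing that since $T \colon A \to B$ and $Q \colon B \to A$, the composite $T \circ Q$ lives on $B$ and equals $S_B$, while $Q \circ T$ lives on $A$ and equals $S_A$---and identifying the right moment to invoke de Vries completeness to collapse infinitely many \eqref{S5}-inequalities into a single inequality involving $a$ itself. Once this pattern is in place, each of the implications reduces to the same three-step routine: decompose an $S_A$- or $S_B$-comparison via an inverse identity, use the hypothesis to replace a factor, and apply \eqref{S5} before taking a meet or join.
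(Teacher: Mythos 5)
Your proposal is correct and follows essentially the same route as the paper: both reduce everything to the compatibility identities, the inverse identities $Q \circ T = S_A$ and $T \circ Q = S_B$, and the de Vries formulas $a = \bigwedge S_A[a] = \bigvee S_A^{-1}[a]$ combined with \eqref{S5} and \eqref{S4}. The only cosmetic difference is in the inclusion $S_B^{-1}[L(T[a])] \subseteq Q^{-1}[a]$, where the paper establishes $T[a] \subseteq T[a']$ and cites part (1), whereas you inline that argument by showing $a''' \leq a''$ for every $a'' \in S_A[a]$ and taking the meet.
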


\begin{proof}
	\eqref{1}. We only prove $a_1 \leq a_2 \iff T[a_1] \supseteq T[a_2]$; the equivalence $a_1 \leq a_2 \iff Q^{-1}[a_1] \subseteq Q^{-1}[a_2]$ is proved similarly.
	The left-to-right implication is immediate.
	For the right-to-left implication, suppose that $T[a_1] \supseteq T[a_2]$.
	Then $QT[a_1] \supseteq QT[a_2]$.
	Since $T$ and $Q$ are inverses of each other, $Q \circ T = S_A$, and hence $S_A[a_1]=QT[a_1] \supseteq QT[a_2]=S_A[a_2]$.
	Thus, $a_1= \bigwedge S_A[a_1] \le \bigwedge S_A[a_2]=a_2$ because $A$ is a de Vries algebra.

	\eqref{2}. We only prove $Q^{-1}[a] = S_B^{-1}[L(T[a])]$; the second equality is proved similarly.
	For the left-to-right inclusion, let $b \in Q^{-1}[a]$, so $b \Q a$.
	Since $Q$ is a compatible subordination, there is $b' \in B$ such that $b \S_B b' \Q a$.
	For every $b'' \in T[a]$ we have $b' \Q a \T b''$, so $b' \S_B b''$, and hence $b' \leq b''$ by (\ref{S5}).
	Therefore, $b' \in L(T[a])$, and so $b \in S_B^{-1}[L(T[a])]$.
	
	For the right-to-left inclusion, let $b \in S_B^{-1}[L(T[a])]$.
	Then there is $b' \in L(T[a])$ such that $b \S_B b'$.
	Since $Q$ and $T$ are inverses of each other, there is $a' \in A$ such that $b \Q a' \T b'$.
	But then $T[a] \subseteq T[a']$ because for every $c \in T[a]$, we have $a' \T b' \leq c$, and so $a' \T c$. 
	Thus, $a' \leq a$ by (\ref{1}). 
	Consequently, $b \Q a' \leq a$, so $b \Q a$, and hence $b \in Q^{-1}[a]$.
\end{proof}

\begin{lemma} \label{lem:T and Q iso in devS}
	Let $(A, S_A)$ and $(B, S_B)$ be isomorphic objects in $\devS$ with
	$T \colon A \to B$ and $Q \colon B \to A$ inverses of each other. 
	For all $a \in A$ and $b \in B$,
\begin{enumerate}[label=\normalfont(\arabic*), ref = \arabic*]
\item\label{lem:T and Q iso in devS:item1} $T[a] = S_B\mleft[\bigvee Q^{-1}[a]\mright] = S_B\mleft[\bigwedge T[a]\mright];$
\item\label{lem:T and Q iso in devS:item2} $Q[b] = S_A\mleft[\bigvee T^{-1}[b]\mright] = S_A\mleft[\bigwedge Q[b]\mright];$
\item\label{lem:T and Q iso in devS:item3} $T^{-1}[b] = S_A^{-1}\mleft[\bigwedge Q[b]\mright] = S_A^{-1}\mleft[\bigvee T^{-1}[b]\mright];$
\item\label{lem:T and Q iso in devS:item4} $Q^{-1}[a] = S_B^{-1}\mleft[\bigwedge T[a]\mright] = S_B^{-1}\mleft[\bigvee Q^{-1}[a]\mright].$
\end{enumerate}	
\end{lemma}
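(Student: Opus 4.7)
The plan is to bootstrap everything from the two equalities in \cref{lem:normal-pair}, namely $T[a] = S_B[U(Q^{-1}[a])]$ and $Q^{-1}[a] = S_B^{-1}[L(T[a])]$, by replacing the bound sets $U(Q^{-1}[a])$ and $L(T[a])$ with single canonical elements, namely $\bigvee Q^{-1}[a]$ and $\bigwedge T[a]$. Both exist because $A$ and $B$ are complete boolean algebras. The heart of the argument is the auxiliary identity $\bigvee Q^{-1}[a] = \bigwedge T[a]$, which will drop out of the de Vries axiom once the first reduction is in place.

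For \eqref{lem:T and Q iso in devS:item1}, I first aim to show $T[a] = S_B[\bigvee Q^{-1}[a]]$. Since $\bigvee Q^{-1}[a] \in U(Q^{-1}[a])$, we have $S_B[\bigvee Q^{-1}[a]] \subseteq S_B[U(Q^{-1}[a])] = T[a]$. Conversely, every $u \in U(Q^{-1}[a])$ satisfies $\bigvee Q^{-1}[a] \leq u$, so $u \S_B c$ forces $\bigvee Q^{-1}[a] \S_B c$ by \eqref{S4}, giving $T[a] \subseteq S_B[\bigvee Q^{-1}[a]]$. Applying the de Vries identity $b = \bigwedge S_B[b]$ at $b = \bigvee Q^{-1}[a]$ then yields $\bigvee Q^{-1}[a] = \bigwedge S_B[\bigvee Q^{-1}[a]] = \bigwedge T[a]$, which simultaneously supplies the second equality of \eqref{lem:T and Q iso in devS:item1} and records the key identity $\bigvee Q^{-1}[a] = \bigwedge T[a]$.

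For \eqref{lem:T and Q iso in devS:item4}, I dualize: $\bigwedge T[a]$ is the greatest element of $L(T[a])$, and for any $l \in L(T[a])$ with $b \S_B l$, the inequality $l \leq \bigwedge T[a]$ combined with \eqref{S4} gives $b \S_B \bigwedge T[a]$. Hence $Q^{-1}[a] = S_B^{-1}[L(T[a])] = S_B^{-1}[\bigwedge T[a]]$, and substituting $\bigvee Q^{-1}[a]$ for $\bigwedge T[a]$ yields the second equality of \eqref{lem:T and Q iso in devS:item4}.

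Items \eqref{lem:T and Q iso in devS:item2} and \eqref{lem:T and Q iso in devS:item3} then follow by repeating the arguments above with the roles of $(A, S_A, T)$ and $(B, S_B, Q)$ interchanged, which is legitimate because \cref{lem:normal-pair} applies symmetrically to the de Vries algebra $(B, S_B)$ in place of $(A, S_A)$. No step presents a genuine obstacle; the main point requiring care is the correct direction of inequality when applying \eqref{S4}, since the join lies above $Q^{-1}[a]$ whereas the meet lies below $T[a]$.
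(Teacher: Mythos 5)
Your proposal is correct and follows essentially the same route as the paper: both reduce everything to \cref{lem:normal-pair}\eqref{2}, replace the bound sets $U(Q^{-1}[a])$ and $L(T[a])$ by $\bigvee Q^{-1}[a]$ and $\bigwedge T[a]$ via \eqref{S4}, and use a de Vries identity to obtain $\bigvee Q^{-1}[a]=\bigwedge T[a]$, with the remaining items following by symmetry. The only cosmetic differences are the order in which the equalities are established and your use of $b=\bigwedge S_B[b]$ where the paper uses $c=\bigvee S_B^{-1}[c]$.
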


\begin{proof}
We only prove the first equality of \eqref{lem:T and Q iso in devS:item4} and the second equality of \eqref{lem:T and Q iso in devS:item1}. The rest are proved similarly.
To see that $Q^{-1}[a] = S_B^{-1}[\bigwedge T[a]]$, by Lemma~\ref{lem:normal-pair}(\ref{2}) it is sufficient to prove that $S_B^{-1}[L(T[a])] = S_B^{-1}[\bigwedge T[a]]$. But this is obvious because $\bigwedge T[a]$ is the greatest lower bound of $T[a]$. 
	
	To see that $S_B\mleft[\bigvee Q^{-1}[a]\mright] = S_B\mleft[\bigwedge T[a]\mright]$, since $(B, S_B)$ is a de Vries algebra, for each $c\in B$, we have $c = \bigvee S_B^{-1}[c]$. Thus, by the first equality of \eqref{lem:T and Q iso in devS:item4},
	\begin{align*}
	S_B\mleft[\bigvee Q^{-1}[a]\mright] = S_B\mleft[\bigvee S_B^{-1}\mleft[\bigwedge T[a]\mright]\mright] = S_B\mleft[\bigwedge T[a]\mright].&\qedhere
	\end{align*}
\end{proof}

\begin{lemma}\label{prop:isos}
	Let $(A, S_A)$ and $(B, S_B)$ be isomorphic objects in $\devS$ with $T \colon A \to B$ and $Q \colon B \to A$ inverses of each other. Define $f \colon A \to B$ and $g \colon B \to A$ by
	\begin{align*}
	f(a)= \bigwedge T[a] \quad \mbox{and}\quad g(b) = \bigwedge Q[b].
	\end{align*}
	Then $f$ and $g$ are structure-preserving bijections that are inverses of each other.
	Moreover, for each $a\in A$ and $b\in B$ we have
	\begin{align*}
	a \T b \mbox{ iff } f(a) \S_B b \quad \mbox{and} \quad b \Q a \mbox{ iff } g(b) \S_A a.
	\end{align*}
\end{lemma}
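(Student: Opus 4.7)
The plan is to first extract from Lemma~\ref{lem:T and Q iso in devS} the two equivalences asserted in the ``moreover'' clause, since they will also power the rest of the argument. From part~(\ref{lem:T and Q iso in devS:item1}) we read off $T[a] = S_B[f(a)]$, so that $a \T b$ iff $b \in T[a]$ iff $f(a) \S_B b$; symmetrically, part~(\ref{lem:T and Q iso in devS:item2}) gives $Q[b] = S_A[g(b)]$, yielding $b \Q a$ iff $g(b) \S_A a$.

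Next I would prove $g \circ f = \id_A$, with $f \circ g = \id_B$ following by a symmetric argument. The key computation is $Q[f(a)] = S_A[a]$: since $Q$ is a compatible subordination in $\SubSfive$ we have $Q \circ S_B = Q$, and combining this with $T[a] = S_B[f(a)]$ and $Q \circ T = S_A$ gives
\[
Q[f(a)] = Q[S_B[f(a)]] = Q[T[a]] = (Q \circ T)[a] = S_A[a].
\]
Hence $g(f(a)) = \bigwedge Q[f(a)] = \bigwedge S_A[a] = a$, using that $(A,S_A)$ is a de Vries algebra. Having $f$ and $g$ as mutually inverse bijections, both of which are monotone by Lemma~\ref{lem:normal-pair}(\ref{1}) applied to $T$ and to $Q$, it follows that $f$ is an order-isomorphism between boolean algebras, and hence a boolean isomorphism (negation being preserved automatically by uniqueness of complements).

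For the remaining equivalence $a \S_A a'$ iff $f(a) \S_B f(a')$, I would pair the ``source-side'' equivalence $a \T b$ iff $f(a) \S_B b$ with the ``target-side'' equivalence $a \T b$ iff $a \S_A g(b)$ coming from part~(\ref{lem:T and Q iso in devS:item3}), which states $T^{-1}[b] = S_A^{-1}[g(b)]$. Combining them gives $f(a) \S_B b$ iff $a \S_A g(b)$; substituting $b = f(a')$ and using $g(f(a')) = a'$ completes the proof. I do not foresee a major obstacle; the only subtlety is invoking the compatibility identities $Q \circ S_B = Q$ and $Q \circ T = S_A$ (and their symmetric counterparts) at the right moments, since these are precisely what make the relational compositions agree with the meet-based formulas defining $f$ and $g$.
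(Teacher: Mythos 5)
Your proposal is correct and follows essentially the same route as the paper's proof: the same key computation $Q[f(a)]=Q S_B[f(a)]=QT[a]=S_A[a]$ yields $gf=\id_A$, monotonicity comes from Lemma~\ref{lem:normal-pair}(\ref{1}), and the identities $T[a]=S_B[f(a)]$, $Q[b]=S_A[g(b)]$ from Lemma~\ref{lem:T and Q iso in devS} give the ``moreover'' equivalences. The only differences are cosmetic: you establish the ``moreover'' clause first and derive preservation of the proximity from item~(\ref{lem:T and Q iso in devS:item3}) rather than item~(\ref{lem:T and Q iso in devS:item4}), which is just the symmetric half of the same lemma.
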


\begin{proof}
Let $a \in A$. Since $Q$ is a compatible subordination and $T$ is its inverse, by Lemma~\ref{lem:T and Q iso in devS}\eqref{lem:T and Q iso in devS:item1} we have 
\[
Q[f(a)] = Q\mleft[\bigwedge T[a]\mright] = Q S_B\mleft[\bigwedge T[a]\mright] = QT[a] = S_A[a].
\]
Thus, since $S_A$ is a de Vries proximity, 
\begin{align*}
gf(a) = \bigwedge Q[f(a)] = \bigwedge S_A[a] = a.
\end{align*}
A similar proof yields that $fg(b)=b$ for each $b \in B$. 
It is an immediate consequence of Lemma~\ref{lem:two-sorted-irreducibility} that $f$ and $g$ are order-preserving. Therefore, $f,g$ are boolean isomorphisms that are inverses of each other.

We next show that $f$ preserves and reflects $S_A$. That $g$ preserves and reflects $S_B$ is proved similarly. Let $a,a' \in A$. As we saw above, $Q[f(a)] = S_A[a]$. Also, by Lemma~\ref{lem:T and Q iso in devS}\eqref{lem:T and Q iso in devS:item4}, $S_B^{-1}[f(a')]=S_B^{-1}[\bigwedge T[a']]=Q^{-1}[a']$. Therefore,
\begin{align*}
f(a) \S_B f(a') & \iff f(a) \in S_B^{-1}[f(a')] \iff f(a) \in Q^{-1}[a'] \iff a' \in Q[f(a)]\\
& \iff a' \in S_A[a] \iff a \S_A a'.
\end{align*}
Finally, let $a\in A$ and $b\in B$. To see that $a \T b$ iff $f(a) \S_B b$, it is sufficient to observe that Lemma~\ref{lem:T and Q iso in devS}\eqref{lem:T and Q iso in devS:item1} implies $S_B[f(a)]=S_B[\bigwedge T[a]]=T[a]$. A similar reasoning gives that $b \Q a$ iff $g(b) \S_A a$.
\end{proof}

As an immediate consequence of Remark~\ref{r:iso} and Lemma~\ref{prop:isos} we obtain:

\begin{theorem}\label{thm:isos}
Isomorphisms in $\devS$ are given by structure-preserving bijections.
\end{theorem}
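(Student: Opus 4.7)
The plan is to obtain \cref{thm:isos} as an immediate corollary combining \cref{r:iso}\eqref{rem:isos in StoneER:item4} with \cref{prop:isos}, noting that both directions of the correspondence are already fully established.

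For the direction from structure-preserving bijections to isomorphisms, I would appeal directly to \cref{r:iso}\eqref{rem:isos in StoneER:item4}. The argument there shows that any boolean isomorphism $f \colon B_1 \to B_2$ satisfying $a \S_1 b \iff f(a) \S_2 f(b)$ yields an isomorphism $T \colon B_1 \to B_2$ in $\SubSfive$ defined by $a \T b \iff f(a) \S_2 b$, with inverse $\inv{T}$. Since $\devS$ is a \emph{full} subcategory of $\SubSfive$, if $(B_1, S_1)$ and $(B_2, S_2)$ happen to be de Vries algebras, then $T$ is still a morphism (and isomorphism) in $\devS$.

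For the converse direction, let $T \colon (A, S_A) \to (B, S_B)$ be any isomorphism in $\devS$, and let $Q$ be its inverse (which coincides with $\inv{T}$ by \cref{r:iso}\eqref{i:E-inverse}, since $\devS$ inherits the allegory structure). \cref{prop:isos} provides precisely what we need: the function $f(a) = \bigwedge T[a]$ is a boolean isomorphism $A \to B$ that preserves and reflects the de Vries proximities, i.e., a structure-preserving bijection. Moreover, the same lemma shows that $T$ is recovered from $f$ via $a \T b \iff f(a) \S_B b$, matching exactly the formula from the first direction. This confirms that every isomorphism in $\devS$ arises in this canonical way from a structure-preserving bijection.

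There is essentially no obstacle here; all the substantive work has been done in \cref{lem:two-sorted-irreducibility}, \cref{lem:T and Q iso in devS}, and \cref{prop:isos}, where completeness of the underlying boolean algebra and axiom (\ref{S8}) were used (via $a = \bigwedge S_A[a] = \bigvee S_A^{-1}[a]$) to ensure that $f(a) = \bigwedge T[a]$ is well-defined and that $gf = \id_A$. The only care needed in writing the proof is to cleanly state both directions so that the phrase ``are given by'' is interpreted as the claimed bijective correspondence between isomorphisms of $\devS$-objects and structure-preserving bijections of the underlying de Vries algebras.
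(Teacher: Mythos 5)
Your proposal is correct and matches the paper's own argument, which derives \cref{thm:isos} as an immediate consequence of \cref{r:iso} and \cref{prop:isos} in exactly the two directions you describe. The only cosmetic quibble is that the fact that the inverse of $T$ is $\inv{T}$ is a general allegory fact (inverses are daggers) rather than literally \cref{r:iso}\eqref{i:E-inverse}, which is stated for $\StoneER$; this does not affect the argument.
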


\begin{remark}
The above theorem generalizes a similar result for the category $\dev$ of de Vries algebras (see \cite[Prop.~1.5.5]{deV62}).
\end{remark}

\begin{remark} \label{cor:iso}
An analogue of Theorem~\ref{thm:isos} is that isomorphisms in $\GleR$ are homeomorphisms that preserve and reflect the equivalence relation. 
\end{remark}

\section{An alternative approach to de Vries duality}\label{sec: functional}

In this final section we show that the equivalence between $\KHausR$ and $\devS$ restricts to an equivalence between $\KHaus$ and the wide subcategory $\devF$ of $\devS$ whose morphisms satisfy two additional conditions (the superscript $\mathsf{F}$ signifies that morphisms in $\KHaus$ are functions). This provides an alternative of de Vries duality. We finish the paper by giving a direct proof that $\dev$ is dually isomorphic to $\devF$. 
An advantage of $\devF$ over $\dev$ is that composition in $\devF$ is usual relation composition. 

\begin{definition}
We define the category $\StoneEf$ as $\Map(\StoneER)$.
Explicitly, this is the category whose objects are pairs $(X, E)$ where $E$ is a closed equivalence relation on a Stone space $X$, and whose morphisms from $(X_1,E_1)$ to $(X_2,E_2)$ are the compatible closed relations $R \colon X_1 \to X_2$ such that $E_1 \subseteq \conv{R} \circ R$ and $R \circ \conv{R} \subseteq E_2$.
\end{definition}

\begin{proposition} \label{prop:KH-StoneEf}
The categories $\KHaus$ and $\StoneEf$ are equivalent.
\end{proposition}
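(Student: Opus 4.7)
The plan is to derive this proposition from Theorem 3.6, which provides an equivalence of allegories $\Qfunc \colon \StoneER \to \KHausR$, by restricting both sides to their wide subcategories of maps. Recall that $\StoneEf$ is by definition $\Map(\StoneER)$, so it suffices to establish two facts: (a) the equivalence $\Qfunc$ restricts to an equivalence $\Map(\StoneER) \simeq \Map(\KHausR)$, and (b) $\Map(\KHausR)$ is equivalent to $\KHaus$.

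For (a), observe that an equivalence of allegories is a fully faithful essentially surjective functor that preserves meets and commutes with $(-)^\dagger$; as explained in Remark 2.11, such a functor is an order-isomorphism on hom-sets and also preserves and reflects identities, compositions, and the dagger. Since the map condition $\id_C \le f^\dagger f$, $f f^\dagger \le \id_D$ is expressed solely in these terms, $\Qfunc$ preserves and reflects it, so it restricts to an equivalence of the categories of maps. Concretely, a compatible closed relation $R \colon (X_1,E_1) \to (X_2,E_2)$ is a map in the allegory $\StoneER$ iff $E_1 \subseteq \conv{R} \circ R$ and $R \circ \conv{R} \subseteq E_2$, which is exactly the condition spelled out in the definition of $\StoneEf$.

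For (b), I unpack the map condition in $\KHausR$: a closed relation $R \colon X \to Y$ is a map iff $\id_X \subseteq \conv{R} \circ R$ (totality) and $R \circ \conv{R} \subseteq \id_Y$ (single-valuedness), and these two conditions together just say that $R$ is the graph of a function $f \colon X \to Y$. The only real content is then the standard fact that, for compact Hausdorff spaces $X$ and $Y$, a function $f \colon X \to Y$ is continuous iff its graph is closed in $X \times Y$. One direction is the classical observation that graphs of continuous maps into Hausdorff spaces are closed. The converse uses compactness of $Y$: the projection $\pi_X \colon X \times Y \to X$ is then closed, so for any closed $F \subseteq Y$ the set $f^{-1}(F) = \pi_X(G_f \cap (X \times F))$ is closed in $X$. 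Since identity relations and relation composition restricted to graphs of functions coincide with identity functions and ordinary function composition, we conclude $\Map(\KHausR) \simeq \KHaus$.

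Combining (a) and (b) yields $\StoneEf = \Map(\StoneER) \simeq \Map(\KHausR) \simeq \KHaus$, which is the desired equivalence. I do not foresee any real obstacle: the only substantive input is the closed-graph characterization of continuous maps between compact Hausdorff spaces, and everything else is a formal consequence of Theorem 3.6 together with the fact that equivalences of allegories restrict to equivalences of their subcategories of maps.
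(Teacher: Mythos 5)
Your proposal is correct and follows the same route as the paper: both derive the result from \cref{t:StoneER-KHausR-equivalent} by passing to the subcategories of maps, using that an equivalence of allegories restricts to an equivalence of the $\Map$ categories (the paper cites \cite[p.~204]{FS90} for this) and that $\Map(\KHausR)=\KHaus$. The only difference is that you spell out the two ingredients the paper leaves to references or treats as known, namely the restriction argument and the identification of maps in $\KHausR$ with continuous functions via the closed-graph characterization.
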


\begin{proof}
By \cref{t:StoneER-KHausR-equivalent}, $\KHausR$ and $\StoneER$ are equivalent allegories. Therefore, $\Map(\KHausR)$ and $\Map(\StoneER)$ are equivalent categories (see \cite[p.~204]{FS90}).
The result follows since $\KHaus = \Map(\KHausR)$ and $\StoneEf = \Map(\StoneER)$.
\end{proof}

\begin{remark}\label{rem:proof exact completion}
Since 
$\StoneEf = \Map(\StoneER) = \Map(\Split(\Eq_{\StoneR}))$
and $\Map(\Split(\Eq_{\StoneR}))$ is isomorphic to $\Map(\Split(\Eq_{\Rel(\Stone)}))$ (see \cref{rem:equiv from exact completion}),
we have that $\StoneEf$ is the exact completion of $\Stone$. Thus, as a consequence we obtain an alternative proof of the fact that $\KHaus$ is the exact completion of $\Stone$. 
\end{remark}

\begin{definition}
	We define the category $\SubSfivef$ as $\Map(\SubSfive)$.
	Explicitly, this is the category whose objects are pairs $(B, S)$ where $S$ is a $\mathsf{S5}$-subordination on a Boolean algebra $B$, and whose morphisms from $(A,S_A)$ to $(B,S_B)$ are the morphisms $T \colon (A, S_A) \to (B, S_B)$ in $\SubSfive$ satisfying $\inv{T} \circ T \subseteq S_A$ and $S_B \subseteq T \circ \inv{T}$.
\end{definition}

We next give a slightly more explicit description of morphisms in $\SubSfivef$.

\begin{lemma} \label{l:explicit}
	A morphism $T \colon (B_1,S_1) \to (B_2,S_2)$ in $\SubSfive$ is a morphisms in $\SubSfivef$ iff the following two conditions hold.
	\begin{enumerate}[label=\normalfont(\arabic*), ref=\arabic*]
		\item \label{i:total}
		$\forall a \in B_1 \ (a \T 0 \Rightarrow a = 0)$.

		\item \label{i:deterministic} 
		$\forall b_1, b_2 \in B_2 \ (b_1 \S_2 b_2 \Rightarrow \exists a \in B_1 : \neg a \T \neg b_1 \mbox{ and } a \T b_2)$.
	\end{enumerate}
\end{lemma}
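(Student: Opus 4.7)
The plan is to unpack the two map conditions $\inv{T} \circ T \subseteq S_1$ and $S_2 \subseteq T \circ \inv{T}$ defining $\SubSfivef$ in terms of $T$ directly, using the dagger formula $b \inv{T} a \iff \neg a \T \neg b$. A straightforward computation gives
\begin{align*}
(a, a') \in \inv{T} \circ T &\iff \exists b \in B_2 : a \T b \text{ and } \neg a' \T \neg b,\\
(b, b') \in T \circ \inv{T} &\iff \exists a \in B_1 : \neg a \T \neg b \text{ and } a \T b'.
\end{align*}
Hence the second map condition $S_2 \subseteq T \circ \inv{T}$ is a verbatim restatement of~\eqref{i:deterministic}, so the remaining task is to prove $\inv{T} \circ T \subseteq S_1 \Leftrightarrow \eqref{i:total}$.

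For the easy direction, assume $\inv{T} \circ T \subseteq S_1$ and let $a \T 0$. Taking the witness $b = 0$, the pair $(a, 0)$ lies in $\inv{T} \circ T$, since $a \T 0$ holds by hypothesis and $\neg 0 \T \neg 0$ (i.e., $1 \T 1$) holds by~\eqref{S1}. Hence $a \S_1 0$, and~\eqref{S5} applied to $S_1$ gives $a \le 0$, so $a = 0$.

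The reverse direction is the main technical step, and it is where I will crucially use the assumption that $T$ is already a morphism in $\SubSfive$, i.e., that $T \circ S_1 = T = S_2 \circ T$. Assume~\eqref{i:total} and suppose $a \T b$ and $\neg a' \T \neg b$. Using compatibility $T = T \circ S_1$, pick $a_1 \in B_1$ with $a \S_1 a_1$ and $a_1 \T b$, and pick $a_2 \in B_1$ with $\neg a' \S_1 a_2$ and $a_2 \T \neg b$. By~\eqref{S4}, both $(a_1 \wedge a_2) \T b$ and $(a_1 \wedge a_2) \T \neg b$ hold, so~\eqref{S3} yields $(a_1 \wedge a_2) \T 0$. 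Hypothesis~\eqref{i:total} then forces $a_1 \wedge a_2 = 0$, i.e., $a_1 \le \neg a_2$. Finally, applying~\eqref{S6} to $\neg a' \S_1 a_2$ gives $\neg a_2 \S_1 a'$, and chaining via~\eqref{S4} yields $a \S_1 a_1 \le \neg a_2 \S_1 a'$, hence $a \S_1 a'$. The main obstacle is identifying this correct combination of compatibility and the $\mathsf{S5}$ axioms; the key point is that $(a_1 \wedge a_2) \T 0$ is forced, so~\eqref{i:total} converts it into $a_1 \le \neg a_2$, which is precisely what allows~\eqref{S4} to bridge $a \S_1 a_1$ and $\neg a_2 \S_1 a'$.
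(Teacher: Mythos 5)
Your proof is correct and follows essentially the same route as the paper: unpack the two map conditions via the dagger, observe that $S_2 \subseteq T \circ \inv{T}$ is literally condition~(\ref{i:deterministic}), and reduce $\inv{T} \circ T \subseteq S_1$ to condition~(\ref{i:total}) using compatibility together with (S3)--(S4) to force an interpolant below $0$. The only (immaterial) difference is that the paper applies compatibility just once, to $a \T b$, and concludes directly without invoking (S6), whereas you decompose both relations; your final chaining step also tacitly uses (S5) to glue the two $S_1$-links, which is available since $S_1$ is an $\mathsf{S5}$-subordination.
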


\begin{proof}
	It is immediate that \eqref{i:deterministic} is equivalent to $S_2 \subseteq T \circ \inv{T}$.
	Moreover, 
	$\inv{T} \circ T \subseteq S_1$ is equivalent to the following condition:
	\begin{equation} \label{e:tot}
		\forall a_1,a_2 \in B_1\ \forall b \in B_2\ ((a_1 \T b \mbox{ and } \neg a_2 \T \neg b) \Rightarrow a_1 \S_1 a_2). \tag{$\ast \ast$}
	\end{equation}
	We show that \eqref{e:tot} is equivalent to \eqref{i:total}.
	Suppose that \eqref{e:tot} holds, $a \in B_1$, and $a \T 0$.
	Letting $a_1 = a$, $a_2 = 0$, and $b = 0$ in \eqref{e:tot} yields $a \S_1 0$. Therefore, $a = 0$ by \eqref{S5}.
	Conversely, suppose that \eqref{i:total} holds. 
	Let $a_1, a_2 \in B_1$ and $b \in B_2$ such that $a_1 \T b$ and $\lnot a_2 \T \lnot b$.
	Since $T$ is a compatible subordination, from $a_1 \T b$ it follows that there is $a \in B_1$ such that $a_1 \S_1 a \T b$.
	From $\lnot a_2 \T \lnot b$ and $a \T b$ it follows that $(\lnot a_2 \land a) \T (\lnot b \land b) = 0$.
	Therefore, \eqref{i:total} implies $\lnot a_2 \land a = 0$, so $a \leq a_2$.
	Thus, $a_1 \S_1 a \leq a_2$, and hence $a_1 \S_1 a_2$.
\end{proof}

\begin{proposition} \label{prop:StoneEf-SubSfivef}
The categories $\StoneEf$ and $\SubSfivef$ are equivalent.
\end{proposition}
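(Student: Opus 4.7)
The plan is to mirror the argument of \cref{prop:KH-StoneEf}: the equivalence of allegories between $\StoneER$ and $\SubSfive$ established in \cref{thm:StoneER and SubSfive equivalent} should automatically restrict to an equivalence between the wide subcategories of maps, which are $\StoneEf$ and $\SubSfivef$ by definition. This follows from the general fact that $\Map(-)$ sends equivalences of allegories to equivalences of categories (see \cite[p.~204]{FS90}).

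Spelled out, first I would note that a morphism of allegories $F \colon \A \to \A'$ sends maps to maps: if $f \colon C \to D$ satisfies $f^\dagger f \le \id_C$ and $\id_D \le f f^\dagger$, then preservation of composition, identities, dagger, and hom-set order gives $F(f)^\dagger F(f) = F(f^\dagger f) \le \id_{F(C)}$ and $\id_{F(D)} \le F(f f^\dagger) = F(f) F(f)^\dagger$. By \cref{rem:quasi-inverse allegories}, both $\Clop$ and $\Ult$ are morphisms of allegories, so they restrict to functors $\StoneEf \to \SubSfivef$ and $\SubSfivef \to \StoneEf$, which inherit fullness and faithfulness from $\Clop$ and $\Ult$ since $\StoneEf$ and $\SubSfivef$ are wide subcategories.

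To finish, I would argue that these restrictions remain quasi-inverses. The natural isomorphisms between the identity and $\Ult \circ \Clop$ (and between $\Clop \circ \Ult$ and the identity) consist of isomorphisms in the respective allegories, and every isomorphism in an allegory is a map since its inverse coincides with its dagger (\cite[p.~199]{FS90}). Hence these natural isomorphisms live entirely in the subcategories of maps, yielding the desired equivalence $\StoneEf \simeq \SubSfivef$. No substantive obstacle is expected, since the argument is purely formal and parallels the proof of \cref{prop:KH-StoneEf}; \cref{l:explicit} already provides an explicit description of the morphisms on the algebraic side if concrete hom-set descriptions of the restricted functors are desired.
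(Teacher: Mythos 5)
Your proposal is correct and takes essentially the same route as the paper: the paper's proof likewise invokes the equivalence of allegories from \cref{thm:StoneER and SubSfive equivalent} together with the identifications $\StoneEf = \Map(\StoneER)$ and $\SubSfivef = \Map(\SubSfive)$, citing \cite[p.~204]{FS90} for the fact that $\Map(-)$ turns an equivalence of allegories into an equivalence of categories (exactly as it does in the proof of \cref{prop:KH-StoneEf}). Your additional spelling-out of why morphisms of allegories preserve maps and why the unit and counit isomorphisms are themselves maps is a correct elaboration of the same formal argument, not a different approach.
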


\begin{proof}
	This follows from the fact that $\StoneER$ and $\SubSfive$ are equivalent allegories (see \cref{thm:StoneER and SubSfive equivalent}), together with the facts that $\StoneEf = \Map(\StoneER)$ and $\SubSfivef = \Map(\SubSfive)$.
\end{proof}

Putting \cref{prop:KH-StoneEf,prop:StoneEf-SubSfivef} together yields: 

\begin{corollary}
	The categories $\KHaus$ and $\SubSfivef$ are equivalent.
\end{corollary}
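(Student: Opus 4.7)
The corollary is a direct formal consequence of the two propositions immediately preceding it, so the proof I have in mind is essentially a one-line composition argument. The plan is to invoke \cref{prop:KH-StoneEf} to obtain an equivalence $F \colon \KHaus \to \StoneEf$, invoke \cref{prop:StoneEf-SubSfivef} to obtain an equivalence $G \colon \StoneEf \to \SubSfivef$, and then observe that the composite $G \circ F \colon \KHaus \to \SubSfivef$ is again an equivalence. The only background fact required is that the composition of two equivalences of categories is an equivalence of categories, which is standard (for instance, essential surjectivity, fullness, and faithfulness are all preserved under composition of functors).

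Concretely, one could even track the composite explicitly through the intermediate category. Starting from a compact Hausdorff space $X$, the functor $F$ produces an $\mathsf{S5}$-subordination space $(\widehat{X}, E)$ arising from a presentation of $X$ as a quotient of a Stone space (for example the Gleason cover, as in \cref{prop:replete}), and then $G$ maps this to the $\mathsf{S5}$-subordination algebra $(\Clop(\widehat{X}), S_E)$, with $S_E$ induced by $E$ as in \cref{rem: Clop Ult StoneER and SubSfive}. On morphisms, a continuous map $f \colon X \to Y$ is first lifted to a compatible closed relation between the chosen Stone quotients and then transported to a morphism in $\SubSfivef$ via $\Clop$; the restriction to maps (in the allegorical sense) is exactly what ensures that the output morphism lies in $\SubSfivef$ rather than merely in $\SubSfive$.

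There is no genuine obstacle here, as all the technical work has already been done: \cref{prop:KH-StoneEf} was obtained from the allegorical equivalence $\KHausR \simeq \StoneER$ of \cref{t:StoneER-KHausR-equivalent} by passing to the subcategories of maps, and likewise \cref{prop:StoneEf-SubSfivef} followed from the allegorical equivalence $\StoneER \simeq \SubSfive$ of \cref{thm:StoneER and SubSfive equivalent}. The only mildly delicate point worth flagging is the alignment of the morphism conditions: one should remember that morphisms in $\SubSfivef$ admit the explicit description given in \cref{l:explicit}, which makes it transparent how continuous functions between compact Hausdorff spaces correspond, through the chain of equivalences, to compatible subordinations $T$ between de Vries-like algebras satisfying totality and determinism conditions. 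Thus the final proof can be stated in two sentences.
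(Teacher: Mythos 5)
Your proposal matches the paper exactly: the corollary is obtained by composing the equivalences of \cref{prop:KH-StoneEf} and \cref{prop:StoneEf-SubSfivef}, which is precisely what the paper does (it states the corollary immediately after ``Putting \cref{prop:KH-StoneEf,prop:StoneEf-SubSfivef} together yields''). The extra tracking of the composite through Gleason covers is correct but not needed.
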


\begin{definition}
Following \cite[Def.~6.5]{BezhanishviliGabelaiaEtAl2019}, we let $\Gle$ denote the full subcategory of $\StoneEf$ whose objects are Gleason spaces.
We also let $\devF$ denote the full subcategory of $\SubSfivef$ consisting of de Vries algebras. 
\end{definition}

We have $\Gle = \Map(\GleR)$ and $\devF = \Map(\devS)$.

\begin{theorem} \label{t:all-f}
	The categories $\KHaus$, $\StoneEf$, $\Gle$, $\SubSfivef$, and $\devF$ are equivalent.
	\[
	\begin{tikzcd}
		&[-10pt] \StoneEf \arrow[r, shift left= 0.2em]{}  \arrow[ld, shift left= 0.2em]{} &[50pt] \SubSfivef \arrow[l, shift left = 0.2em]{}\\
		\KHaus \arrow[ur, shift left= 0.2em]{} \arrow[dr, shift left= 0.2em]{} & & \\
		& \Gle \arrow[uu, hookrightarrow] \arrow[ul, shift left= 0.2em]{} \arrow[r, shift left= 0.2em]{} & \devF \arrow[uu, hookrightarrow] \arrow[l, shift left= 0.2em]{}
	\end{tikzcd}
	\]
\end{theorem}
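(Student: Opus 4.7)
The plan is to derive the theorem directly from the allegory-theoretic work of the previous sections by applying the $\Map$ construction to the diagram of equivalences of allegories involving $\KHausR$, $\StoneER$, $\SubSfive$, $\GleR$, and $\devS$. The key tool, already invoked in the proof of \cref{prop:KH-StoneEf}, is that any equivalence of allegories $F \colon \A \to \A'$ restricts to an equivalence of categories $\Map(F) \colon \Map(\A) \to \Map(\A')$ (see \cite[p.~204]{FS90}).

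First I would record the identifications $\Gle = \Map(\GleR)$ and $\devF = \Map(\devS)$ asserted just before the theorem. Since $\GleR$ is a full sub-allegory of $\StoneER$, a compatible closed relation between Gleason spaces is a map in $\GleR$ iff it is a map in $\StoneER$; hence the full subcategory of $\StoneEf = \Map(\StoneER)$ on Gleason spaces coincides with $\Map(\GleR)$. The same argument, with $\devS \hookrightarrow \SubSfive$ in place of $\GleR \hookrightarrow \StoneER$, yields $\devF = \Map(\devS)$.

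Next, I would collect the equivalences of allegories already available: $\KHausR \simeq \StoneER$ from \cref{t:StoneER-KHausR-equivalent}, $\StoneER \simeq \SubSfive$ from \cref{thm:StoneER and SubSfive equivalent}, the inclusion $\GleR \hookrightarrow \StoneER$ from \cref{prop:replete}, and $\GleR \simeq \devS$ from \cref{i:restricts}. Applying $\Map$ to each, and using the identifications $\KHaus = \Map(\KHausR)$, $\StoneEf = \Map(\StoneER)$, $\SubSfivef = \Map(\SubSfive)$, $\Gle = \Map(\GleR)$, and $\devF = \Map(\devS)$, one obtains equivalences of categories $\KHaus \simeq \StoneEf$ (recovering \cref{prop:KH-StoneEf}), $\StoneEf \simeq \SubSfivef$ (recovering \cref{prop:StoneEf-SubSfivef}), $\Gle \hookrightarrow \StoneEf$ as an equivalence, and $\Gle \simeq \devF$. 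Composing around the pentagon then gives that $\devF \hookrightarrow \SubSfivef$ is an equivalence as well, completing the displayed diagram.

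There is essentially no obstacle at this stage: the allegory-theoretic preparation of \cref{sec:lifting equivalences,sec: main equivalences} does all the heavy lifting. The only point genuinely worth double-checking is the compatibility of $\Map$ with full sub-allegory inclusions, which is what secures the identifications $\Gle = \Map(\GleR)$ and $\devF = \Map(\devS)$ and makes the entire proof a routine transport of the diagram from \cref{cor:all-equivalent} through the $\Map$ construction.
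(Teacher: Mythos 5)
Your proposal is correct and follows essentially the same route as the paper: apply the $\Map(-)$ construction to the diagram of equivalences of allegories from \cref{cor:all-equivalent}, using the identifications $\KHaus = \Map(\KHausR)$, $\StoneEf = \Map(\StoneER)$, $\SubSfivef = \Map(\SubSfive)$, $\Gle = \Map(\GleR)$, and $\devF = \Map(\devS)$. Your extra remark justifying $\Gle = \Map(\GleR)$ and $\devF = \Map(\devS)$ via the fact that being a map is detected in any full sub-allegory is a welcome elaboration of a point the paper merely asserts.
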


\begin{proof}
	By \cref{cor:all-equivalent}, the allegories $\KHausR$, $\StoneER$, $\GleR$, $\SubSfive$ and $\devS$ are equivalent.
	Therefore, the categories $\Map(\KHausR)$, $\Map(\StoneER)$, $\Map(\GleR)$, $\Map(\SubSfive)$, and $\Map(\devS)$ are equivalent. Thus, $\KHaus$, $\StoneEf$, $\Gle$, $\SubSfivef$, and $\devF$ are equivalent. 
\end{proof}

In particular, the equivalence between $\KHaus$ and $\devF$ provides an alternative of de Vries duality. In the rest of the paper we show how to derive de Vries duality from this result. We start by recalling the definition of a de Vries morphism. From now on, following \cref{rem:prec de Vries}, we denote a de Vries proximity on a boolean algebra by $\prec$.

\begin{definition}
	A function $f \colon A \to B$ between de Vries algebras $(A, \prec)$ and $(B, \prec)$ is a {\em de Vries morphism} if it satisfies the following conditions:
	\begin{enumerate}[label = (M\arabic*), ref = M\arabic*]
		\item\label{M1} $f(0) = 0$;
		\item\label{M2} $f(a \land b) = f(a) \land f(b)$;
		\item\label{M3} $a \prec b$ implies $\lnot f(\lnot a) \prec f(b)$;
		\item\label{M4} $f(a) = \bigvee \{f(b) \mid b \prec a\}$.
	\end{enumerate}
	The composition of two de Vries morphisms $f \colon A \to B$ and $g \colon B \to C$ is the de Vries morphism $g * f \colon A \to C$ given by
	\[
		(g * f)(a) = \bigvee \{ gf(b) \mid b \prec a \}
	\]
	for each $a\in A$. Let $\dev$ be the category of de Vries algebras and de Vries morphisms, where identity morphisms are identity functions and composition is defined as above.
\end{definition}

\begin{remark}
Each de Vries morphism $f \colon A \to B$ satisfies that $a \prec b$ implies $f(a) \prec f(b)$ for each $a,b \in A$. This will be used in what follows.
\end{remark}

We recall from the introduction that de Vries duality is induced by the contravariant functor $\RO \colon \KHaus \to \dev$ that associates to each $X \in \KHaus$ the de Vries algebra $(\RO(X),\prec)$ of regular opens of $X$ where $U \prec V$ iff $\cl(U) \subseteq V$. The functor $\RO$ sends each continuous function $f \colon X \to Y$ between compact Hausdorff spaces to the de Vries morphism $f^* \colon \RO(Y) \to \RO(X)$ given by $f^*(V)=\int(\cl(f^{-1}[V]))$ for each $V \in \RO(Y)$.

We show that $\dev$ is dually isomorphic to $\devF$. 
The definition of contravariant functors between $\dev$ and $\devF$ is motivated by the following result.

\begin{proposition} \label{p:translation}
	Let $f \colon X \to Y$ be a continuous function between compact Hausdorff spaces.
	\begin{enumerate}[label=\normalfont(\arabic*), ref = \arabic*]
		\item \label{i:function} For every $V\in \RO(Y)$,
		\[
			\int(\cl(f^{-1}[V])) = \bigvee\{ U \in \RO(X) \mid f[\cl(U)] \subseteq V \},
		\]
		where the join is computed in $\RO(X)$.
		\item \label{i:relation} For every $U \in \RO(X)$ and $V \in \RO(Y)$,
		\[
			f[\cl(U)] \subseteq V \iff \exists V' \in \RO(Y) : \cl(V') \subseteq V \text{ and } U \subseteq \int(\cl(f^{-1}[V'])).
		\]
	\end{enumerate}
\end{proposition}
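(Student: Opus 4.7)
The plan is to derive both parts directly from three standard facts: the identity $\cl(\int(\cl(W))) = \cl(W)$ for any open $W$, the continuity inclusion $f[\cl(A)] \subseteq \cl(f[A])$, and the normality (equivalently, regularity) of the compact Hausdorff space $Y$.

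For (1), the inclusion $\supseteq$ in $\RO(X)$ is immediate: if $U \in \RO(X)$ and $f[\cl(U)] \subseteq V$, then $\cl(U) \subseteq f^{-1}[V]$, and since $U = \int(\cl(U)) \subseteq \int(\cl(f^{-1}[V]))$, each element of the set on the right lies below the left-hand side, so the join does as well. For the reverse inclusion, the idea is to cover $V$ by regular opens whose closures remain inside $V$: for each $y \in V$, by regularity pick $V_y \in \RO(Y)$ with $y \in V_y$ and $\cl(V_y) \subseteq V$, and set $U_y := \int(\cl(f^{-1}[V_y]))$. Then $\cl(U_y) = \cl(f^{-1}[V_y])$ by the identity above, so $f[\cl(U_y)] \subseteq \cl(f[f^{-1}[V_y]]) \subseteq \cl(V_y) \subseteq V$, which shows that each $U_y$ belongs to the set on the right-hand side of (1). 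Since $f^{-1}[V] = \bigcup_y f^{-1}[V_y] \subseteq \bigcup_y \cl(U_y) \subseteq \cl(\bigcup_y U_y)$, taking closure and then interior yields $\int(\cl(f^{-1}[V])) \subseteq \int(\cl(\bigcup_y U_y)) = \bigvee_y U_y$, completing (1).

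For (2), the $\Leftarrow$ direction follows by taking closures in $U \subseteq \int(\cl(f^{-1}[V']))$: one obtains $\cl(U) \subseteq \cl(f^{-1}[V'])$, hence $f[\cl(U)] \subseteq \cl(f[f^{-1}[V']]) \subseteq \cl(V') \subseteq V$. For $\Rightarrow$, since $f[\cl(U)]$ is a compact subset of the open set $V$, normality of $Y$ provides an open $W$ with $f[\cl(U)] \subseteq W \subseteq \cl(W) \subseteq V$. Setting $V' := \int(\cl(W))$ produces a regular open with $\cl(V') = \cl(W) \subseteq V$ and $f[\cl(U)] \subseteq W \subseteq V'$, so $\cl(U) \subseteq f^{-1}[V']$ gives $U \subseteq \int(\cl(f^{-1}[V']))$, as required. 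The main subtlety is in the $\subseteq$ direction of (1): one cannot expect $\cl(f^{-1}[V])$ to equal $\bigcup_y \cl(f^{-1}[V_y])$, and the trick is to collapse the union of closures into the single closure $\cl(\bigcup_y U_y)$ before taking interior to form the join in $\RO(X)$.
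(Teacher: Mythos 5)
Your proof is correct, but your treatment of part (1) takes a noticeably different (and longer) route than the paper's. For the inclusion $\subseteq$ in (1) you cover $V$ by regular opens $V_y$ of $Y$ with $\cl(V_y)\subseteq V$, pull each back to $U_y=\int(\cl(f^{-1}[V_y]))$, verify $f[\cl(U_y)]\subseteq V$ via the continuity inclusion $f[\cl(A)]\subseteq\cl(f[A])$, and then collapse $\bigcup_y\cl(U_y)$ into $\cl(\bigcup_y U_y)$ before applying $\int\cl$. The paper instead works entirely in $X$: since $f^{-1}[V]$ is open and $X$ is regular, $f^{-1}[V]=\bigcup\{U\in\RO(X)\mid\cl(U)\subseteq f^{-1}[V]\}$, and applying $\int\cl$ to both sides gives the stated equality in one step, using only the set-theoretic equivalence $\cl(U)\subseteq f^{-1}[V]\iff f[\cl(U)]\subseteq V$. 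Both arguments are valid; the paper's decomposition avoids the detour through $Y$ and the bookkeeping with the family $\{U_y\}$, while yours has the mild advantage of exhibiting an explicit cofinal subfamily of the index set. For part (2) the two proofs are essentially the same: your appeal to normality of $Y$ to separate the compact set $f[\cl(U)]$ from $Y\setminus V$ is the same device the paper packages as ``$V$ is the directed union of the $V'$ with $\cl(V')\subseteq V$, and $f[\cl(U)]$ is compact,'' and the $\Leftarrow$ directions differ only in whether one uses $f[\cl(A)]\subseteq\cl(f[A])$ or the closedness of $f^{-1}[\cl(V')]$.
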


\begin{proof}
	\eqref{i:function}. 
	Since $f^{-1}[V]$ is open, we have 
	\[
	f^{-1}[V] = \bigcup \{U \in \RO(X) \mid \cl(U) \subseteq f^{-1}[V]\}.
	\]
	Therefore, 
	\[
	\int(\cl(f^{-1}[V])) = \bigvee \{ U \in \RO(X) \mid \cl(U) \subseteq f^{-1}[V] \}\\
	= \bigvee \{ U \in \RO(X) \mid f[\cl(U)] \subseteq V \}.
	\]
	
	\eqref{i:relation}.
	To prove the left-to-right implication, suppose $f[\cl(U)] \subseteq V$.
	Since $f[\cl(U)]$ is closed and $V=\bigcup\{V'\in\RO(Y)\mid\cl(V')\subseteq V\}$, where the union is directed, 
	there is $V' \in \RO(Y)$ such that $f[\cl(U)] \subseteq V' \subseteq \cl(V') \subseteq V$.
	Therefore, $U \subseteq \cl(U) \subseteq f^{-1}[V'] \subseteq \int(\cl(f^{-1}[V']))$.
	
	To prove the right-to-left implication, suppose there is $V' \in \RO(Y)$ such that $\cl(V') \subseteq V$ and $U \subseteq \int(\cl(f^{-1}[V']))$.
	Then 
	\[
	\cl(U) \subseteq \cl(\int(\cl(f^{-1}[V']))) = \cl(f^{-1}[V']) \subseteq \cl(f^{-1}[\cl(V')])= f^{-1}[\cl(V')] \subseteq f^{-1}[V],
	\]
	which implies $f[\cl(U)] \subseteq V$.
\end{proof}

Proposition~\ref{p:translation} suggests the following definition of two contravariant functors providing a dual isomorphism between $\dev$ and $\devF$.

\begin{definition} \label{d:functors}
\mbox{}\begin{enumerate}
\item\label{fS} The contravariant functor from $\devF$ to $\dev$ is the identity on objects and maps a morphism $S \colon A \to B$ in $\devF$ to the function
$f_S \colon B \to A$ given by 
\[
f_S(b) = \bigvee \{ a \in A \mid a \S b \} = \bigvee S^{-1}[b].
\]
\item\label{Sf} The contravariant functor from $\dev$ to $\devF$ is the identity on objects and maps a de Vries morphism $f \colon A \to B$ to the relation $S_f \colon B \to A$ given by
\[
b \S_f a \iff \exists a' \in A : a' \prec a \text{ and } b \leq f(a').
\]
\end{enumerate}
\end{definition}

To show that the above functors are well defined, we need the following lemma.

\begin{lemma} \label{lem:space-dev}
Let $S \colon A \to B$ be a morphism in $\devF$ and let $b_1, b_2 \in B$ be such that $b_1 \prec b_2$. Then
$
f_S(b_1)\S b_2.
$
\end{lemma}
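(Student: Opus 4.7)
The plan is to use both parts of \cref{l:explicit} to produce a single witness $a \in A$ that dominates every element of $S^{-1}[b_1]$, and then apply \eqref{S4}.

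First, since $b_1 \prec b_2$, by \cref{l:explicit}\eqref{i:deterministic} there exists $a \in A$ with $\lnot a \S \lnot b_1$ and $a \S b_2$. I claim that $a' \leq a$ for every $a' \in S^{-1}[b_1]$. Fix such an $a'$, so $a' \S b_1$. Applying \eqref{S4} twice (with $a' \land \lnot a \leq a'$ and $a' \land \lnot a \leq \lnot a$), we obtain both $(a' \land \lnot a) \S b_1$ and $(a' \land \lnot a) \S \lnot b_1$. Then \eqref{S3} gives $(a' \land \lnot a) \S (b_1 \land \lnot b_1) = 0$. By \cref{l:explicit}\eqref{i:total}, this forces $a' \land \lnot a = 0$, i.e.\ $a' \leq a$.

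Since $A$ is a complete boolean algebra, this yields $f_S(b_1) = \bigvee S^{-1}[b_1] \leq a$. Combining with $a \S b_2$ and one more application of \eqref{S4} gives $f_S(b_1) \S b_2$, as required.

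The conceptual content is the second step, where the ``deterministic'' witness $a$ obtained from $b_1 \prec b_2$ has to be shown to be an upper bound of the entire preimage $S^{-1}[b_1]$; the small algebraic identity $b_1 \land \lnot b_1 = 0$ combined with the totality axiom \eqref{i:total} is what makes this work. Everything else is a routine use of the subordination axioms and the completeness of $A$.
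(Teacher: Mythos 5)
Your proof is correct and follows essentially the same route as the paper's: take the witness $a$ from \cref{l:explicit}\eqref{i:deterministic}, show it is an upper bound of $S^{-1}[b_1]$ via $(a' \wedge \lnot a) \S (b_1 \wedge \lnot b_1) = 0$ and the totality condition \eqref{i:total}, and conclude with \eqref{S4}. The only difference is that you spell out the intermediate uses of \eqref{S3} and \eqref{S4} that the paper leaves implicit.
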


\begin{proof}
By Lemma~\ref{l:explicit}, from $b_1 \prec b_2$ it follows that there is $a_0 \in A$ such that $\lnot a_0 \S \lnot b_1$ and $a_0 \S b_2$.
We show that 
$a \S b_1$ implies $a \leq a_0$.
From $a \S b_1$ and $\neg a_0 \S \neg b_1$ it follows that $(a \wedge \neg a_0) \S (b_1 \wedge \neg b_1) = 0$. By \cref{l:explicit}\eqref{i:total}, $a \wedge \neg a_0=0$, so $a \le a_0$.
Thus, $\bigvee\{a \in A \mid a \S b_1\} \leq a_0$, and hence $\bigvee\{a \in A \mid a \S b_1\} \leq a_0 \S b_2$, which gives $f_S(b_1) \S b_2$.
\end{proof}

\begin{lemma}
The assignment in Definition~\ref{d:functors}(\ref{fS}) is a well-defined contravariant functor from $\devF$ to $\dev$.
\end{lemma}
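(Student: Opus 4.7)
The plan is to verify that $f_S$ satisfies each of the de Vries axioms \eqref{M1}--\eqref{M4} for every morphism $S \colon A \to B$ in $\devF$, and then to check functoriality, i.e., preservation of identities and reversal of composition.

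First I would dispatch the easier axioms. For \eqref{M1}, condition \eqref{i:total} of \cref{l:explicit} forces $S^{-1}[0] = \{0\}$, so $f_S(0) = \bigvee S^{-1}[0] = 0$. For \eqref{M4}, the compatibility $S = \prec_B \circ S$ says every $a \S b$ factors as $a \S b' \prec b$ for some $b'$, whence $a \le f_S(b')$; the reverse inequality is immediate from monotonicity. For \eqref{M3}, given $b_1 \prec b_2$, I would use the density axiom \eqref{S7} to interpose $b_3$ with $b_1 \prec b_3 \prec b_2$, then apply \eqref{i:deterministic} to $b_1 \prec b_3$ to obtain $a \in A$ with $\neg a \S \neg b_1$ and $a \S b_3$; this yields $\neg f_S(\neg b_1) \le a \le f_S(b_3)$. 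From $b_3 \prec b_2$, \cref{lem:space-dev} gives $f_S(b_3) \S b_2$, and compatibility $S = \prec_A \circ S$ produces $a' \in A$ with $f_S(b_3) \prec a' \S b_2$, so $f_S(b_3) \prec f_S(b_2)$ by \eqref{S4}; chaining then delivers \eqref{M3}.

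The main obstacle is \eqref{M2}, $f_S(b_1 \wedge b_2) = f_S(b_1) \wedge f_S(b_2)$. The inequality $\le$ is monotonicity, but the reverse is delicate because $A$ need not be completely distributive, so the directed supremum $f_S(b_i) = \bigvee S^{-1}[b_i]$ does not automatically distribute over binary meet. The cleanest route is to invoke the equivalence $\devF \simeq \KHaus$ of \cref{t:all-f}: $S$ corresponds to a continuous function $\phi \colon X \to Y$ between compact Hausdorff spaces, and by \cref{p:translation}\eqref{i:function} the function $f_S$ then coincides with the classical pullback $\phi^*(V) = \int(\cl(\phi^{-1}[V]))$, which satisfies \eqref{M2} since $\phi^{-1}$ preserves binary intersections and the operator $\int \circ \cl$ preserves binary meets of open sets. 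A purely algebraic argument would combine \eqref{i:total}, \eqref{i:deterministic}, and compatibility to match suprema on each side, but this is the most technical calculation in the proof.

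For functoriality, the identity morphism on $(A, \prec_A)$ in $\devF$ is $\prec_A$ itself, and $f_{\prec_A}(a) = \bigvee\{a' \mid a' \prec_A a\} = a$ since $(A, \prec_A)$ is a de Vries algebra. For composition, given $S_1 \colon A \to B$ and $S_2 \colon B \to C$ in $\devF$, I would establish $f_{S_2 \circ S_1} = f_{S_1} \mathbin{*} f_{S_2}$ by two inequalities. For $(f_{S_1} \mathbin{*} f_{S_2})(c) \le f_{S_2 \circ S_1}(c)$: if $a \S_1 f_{S_2}(c')$ for some $c' \prec c$, then compatibility $S_1 = \prec_B \circ S_1$ provides $b \prec_B f_{S_2}(c')$ with $a \S_1 b$; since $c' \prec c$ yields $f_{S_2}(c') \S_2 c$ by \cref{lem:space-dev}, \eqref{S4} gives $b \S_2 c$, hence $a \le f_{S_2 \circ S_1}(c)$. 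For the reverse inequality: if $a \S_1 b \S_2 c$, then $S_2 = \prec_C \circ S_2$ produces $c'$ with $b \S_2 c' \prec c$, so $b \le f_{S_2}(c')$ and $a \S_1 f_{S_2}(c')$ by \eqref{S4}, giving $a \le f_{S_1}(f_{S_2}(c')) \le (f_{S_1} \mathbin{*} f_{S_2})(c)$.
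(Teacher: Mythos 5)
Your proposal is correct and, for \eqref{M1}, \eqref{M3}, \eqref{M4}, identities, and composition, follows essentially the same route as the paper: \eqref{M1} from \cref{l:explicit}\eqref{i:total}, \eqref{M4} from compatibility, \eqref{M3} from \cref{l:explicit}\eqref{i:deterministic} plus compatibility (your extra interpolation of $b_3$ is harmless), and your two-inclusion argument for $f_{S_2\circ S_1}=f_{S_1}*f_{S_2}$ is exactly the content of the paper's Claim~\ref{cl:correspondence}. The one genuine divergence is \eqref{M2}, which you single out as the main obstacle and propose to handle by transporting $S$ to a continuous map via \cref{t:all-f} and \cref{p:translation}. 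That detour is available at this point in the paper (the equivalence is established earlier) and can be made rigorous using \cref{thm:isos} to transfer along isomorphisms, but it is much heavier than needed and somewhat against the grain of this section, whose point is to extract de Vries duality algebraically. The worry about complete distributivity is misplaced: one only needs the \emph{frame} distributive law $x \wedge \bigvee_i y_i = \bigvee_i (x \wedge y_i)$, which holds in every complete boolean algebra. Concretely, if $a_1 \S b_1$ and $a_2 \S b_2$, then $a_1 \wedge a_2 \S b_1$ and $a_1 \wedge a_2 \S b_2$ by \eqref{S4}, hence $a_1 \wedge a_2 \S (b_1 \wedge b_2)$ by \eqref{S3}; therefore
\[
f_S(b_1) \wedge f_S(b_2) \;=\; \bigvee \{\, a_1 \wedge a_2 \mid a_1 \S b_1,\ a_2 \S b_2 \,\} \;\le\; \bigvee S^{-1}[b_1 \wedge b_2] \;=\; f_S(b_1 \wedge b_2),
\]
the first equality being the frame law applied twice, and the reverse inequality being monotonicity. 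This is why the paper dismisses \eqref{M2} as straightforward. (Minor point: your compatibility equations should read $S_1 = S_1 \circ {\prec_A} = {\prec_B} \circ S_1$; as written, some of your composites do not type-check, though the intended uses are all correct.)
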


\begin{proof}
Let $S \colon A \to B$ be a morphism in $\devF$. We show that $f_S$ is a de Vries morphism. That $f_S$ satisfies (\ref{M1}) follows from Lemma~\ref{l:explicit}\eqref{i:total}. The proof of (\ref{M2}) is straightforward and (\ref{M4}) follows from the fact that $S$ is a compatible subordination. We prove (\ref{M3}). Suppose $b_1 \prec b_2$. We must show that $\lnot f_S(\lnot b_1) \prec f_S(b_2)$, i.e. $\lnot \bigvee S^{-1}[\lnot b_1] \prec \bigvee S^{-1}[b_2]$, which is equivalent to 
$\bigwedge \{ \neg c \mid c \in S^{-1}[\lnot b_1] \} \prec \bigvee S^{-1}[b_2]$.
By Lemma~\ref{l:explicit}\eqref{i:deterministic}, there is $a \in A$ such that $\lnot a \mathrel{S} \lnot b_1$ and $a \mathrel{S} b_2$.
Since $S$ is compatible, $a \mathrel{S} b_2$ implies that there is $a' \in A$ such that $a \prec a' \mathrel{S} b_2$.
Because $\neg a \in S^{-1}[\lnot b_1]$ and $a' \in S^{-1}[b_2]$, we have $\bigwedge \{ \neg c \mid c \in S^{-1}[\lnot b_1] \} \leq a \prec a' \leq \bigvee S^{-1}[b_2]$. This proves that $f_S$ is a de Vries morphism.

Let $S_1 \colon A \to B$ and $S_2 \colon B \to C$ be morphisms in $\devF$.
We prove that $f_{S_2 \circ S_1} = f_{S_1} * f_{S_2}$.

\begin{claim} \label{cl:correspondence}
For every $a \in A$ and $c \in C$, 
\[
a \mathrel{(S_2 \circ S_1)} c \iff \exists c' \in C : c' \prec c \mbox{ and } a \leq f_{S_1}f_{S_2}(c').
\]
\end{claim}
\begin{proof}[Proof of claim]
For the left-to-right implication, suppose $a \mathrel{(S_2 \circ S_1)} c$.
Then there is $b \in B$ such that $a \S_1 b \S_2 c$.
From $a \S_1 b$ and the definition of $f_{S_1}$ it follows that $a \leq f_{S_1}(b)$.
Since $S$ is compatible, from $b \S_2 c$ it follows that there is $c' \in C$ such that $b \S_2 c' \prec c$.
Therefore, $b \leq f_{S_2}(c')$, and so
$a \leq f_{S_1}(b) \leq f_{S_1}f_{S_2}(c')$.
		
For the right-to-left implication, suppose that there is $c' \in C$ such that $c' \prec c$ and $a \leq f_{S_1}f_{S_2}(c')$.
By Lemma~\ref{lem:space-dev}, 
$f_{S_2}(c') \S_2 c$. Since $S$ is compatible, there is $b \in B$ such that $f_{S_2}(c') \prec b \S_2 c$.
Applying Lemma~\ref{lem:space-dev} again, $a \leq f_{S_1}f_{S_2}(c') = \bigvee \{ a \in A \mid a \S_1 f_{S_2}(c') \} \S_1 b \S_2 c$, which implies $a \S_1 b \S_2 c$.
Thus, $a \mathrel{(S_2 \circ S_1)} c$.
\end{proof}
For every $c \in C$, we have
\begin{align*}
(f_{S_1}*f_{S_2})(c) & = \bigvee \{ f_{S_1}f_{S_2}(c') \mid c' \prec c \} \\
& = \bigvee\{a \in A \mid \exists c' \in C : c' \prec c \mbox{ and } a  \leq f_{S_1}f_{S_2}(c')\} \\
& = \bigvee\{a \in A \mid a \mathrel{(S_2 \circ S_1)} c\} && \text{(by Claim~\ref{cl:correspondence})} \\
& = f_{S_2 \circ S_1}(c).
\end{align*}
This proves $f_{S_2 \circ S_1} = f_{S_1} * f_{S_2}$.

Let $(A,\prec)$ be a de Vries algebra. Since for every $a \in A$, we have $a = \bigvee\{b \in A \mid b \prec a\}$, the identity on $(A,\prec)$ in $\devF$ is mapped to the identity on $(A,\prec)$ in $\dev$.
\end{proof}

\begin{lemma}
The assignment in Definition~\ref{d:functors}(\ref{Sf}) is a well-defined contravariant functor from $\dev$ to $\devF$.
\end{lemma}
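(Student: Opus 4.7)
The plan is to verify the claim in three stages: (a) $S_f$ is a subordination, (b) $S_f$ satisfies compatibility and the map conditions, and (c) the assignment is functorial.

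For (a), axioms \eqref{S1} and \eqref{S4} are immediate from (M1) and (S5) in $A$, noting that $f(1)=1$ follows from (M3) applied to $1 \prec 1$ (namely $\neg f(0) \prec f(1)$ gives $1 \le f(1)$). Axiom \eqref{S2} combines two witnesses $a_1',a_2' \prec a$ using (S2) in $A$ together with monotonicity of $f$ via (M2); axiom \eqref{S3} uses (M2). For compatibility, the inclusions $S_f \circ \prec_B \subseteq S_f$ and $\prec_A \circ S_f \subseteq S_f$ are immediate from (S4) and (S5) in $A$. For $S_f \subseteq \prec_A \circ S_f$, given $b \le f(a')$ with $a' \prec a$, interpolate $a' \prec a'' \prec a$ to witness $b \S_f a'' \prec_A a$. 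For $S_f \subseteq S_f \circ \prec_B$, use the same interpolation together with the fact (derivable from (M3)) that de Vries morphisms preserve $\prec$: $f(a') \prec_B f(a'')$, so $b \prec_B f(a'')$ and $f(a'') \S_f a$.

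For (b), totality is easy: if $b \S_f 0$, then any witness $a' \prec 0$ equals $0$ by (S5), forcing $b \le f(0)=0$ by (M1). The main obstacle lies in verifying the determinism condition of Lemma~\ref{l:explicit}\eqref{i:deterministic}. Given $a_1 \prec_A a_2$, by repeated interpolation there exist $a_1',a',a''$ with $a_1 \prec a_1' \prec a' \prec a'' \prec a_2$. I would set $b = f(a')$. Then $b \S_f a_2$ is witnessed by $a''$ since $f(a') \le f(a'')$ and $a'' \prec a_2$. For $\neg b \S_f \neg a_1$, apply \eqref{S6} to $a_1' \prec a'$ to obtain $\neg a' \prec \neg a_1'$, then apply (M3) to get $\neg f(a') \prec f(\neg a_1')$, so $\neg b \le f(\neg a_1')$ while $\neg a_1' \prec \neg a_1$ by \eqref{S6} applied to $a_1 \prec a_1'$.

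For (c), $S_{\id_A} = \prec_A$ is immediate from interpolation together with (S4) and (S5). For composition, I would show $S_{g*f} = S_f \circ S_g$ by double inclusion. For $S_f \circ S_g \subseteq S_{g*f}$, unpacking yields $b' \prec_B b$, $a' \prec_A a$, $c \le g(b')$, and $b \le f(a')$; monotonicity of $g$ and $f$ gives $c \le gf(a')$, and interpolating $a' \prec a^* \prec a$ yields $gf(a') \le (g*f)(a^*)$ directly from the definition of $*$, so $c \S_{g*f} a$. The converse is the subtler direction: given $c \le (g*f)(a^*)$ with $a^* \prec a$, the key observation is that $(g*f)(a^*) \le gf(a^*)$ (each join summand $gf(a'')$ with $a'' \prec a^*$ satisfies $a'' \le a^*$ by (S5), hence $gf(a'') \le gf(a^*)$). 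Interpolating $a^* \prec a^{**} \prec a$ and setting $b = f(a^{**})$, $b' = f(a^*)$, we have $b' \prec_B b$ since $f$ preserves $\prec$, and $c \le gf(a^*) = g(b')$, witnessing $c \S_g b \S_f a$.

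The main obstacles are paragraph two's determinism condition, where the right negated witness comes from combining interpolation with (M3) and \eqref{S6}, and paragraph three's converse composition inclusion, which requires extracting a single witness from the directed join defining $g*f$; the inequality $(g*f)(a^*) \le gf(a^*)$ bypasses the need for a compactness-style extraction.
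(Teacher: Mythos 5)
Your proposal is correct and follows essentially the same route as the paper's proof: interpolation via (\ref{S7}) for compatibility and for the identity, the combination of (\ref{M3}) with (\ref{S6}) to produce the negated witness in the determinism condition, and the inequality $(g*f)(a^*) \le gf(a^*)$ together with preservation of $\prec$ by de Vries morphisms for the two inclusions establishing $S_{g*f} = S_f \circ S_g$. The only differences are cosmetic (extra explicit verification of the easier subordination axioms and a slightly longer interpolation chain in the determinism step).
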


\begin{proof}
Let $f \colon A \to B$ be a de Vries morphism. We show that $S_f \colon B \to A$ is a morphism in $\devF$. It is straightforward to see that $S_f$ is a subordination. We only verify (\ref{S2}). Suppose that $b_1, b_2 \S_f a$. Then there exist $a_1,a_2 \prec a$ such that $b_1 \le f(a_1)$ and $b_2 \le f(a_2)$. Therefore, $(a_1 \vee a_2) \prec a$ and $b_1 \vee b_2 \le f(a_1) \vee f(a_2) \le f(a_1 \vee a_2)$ because $f$ is order-preserving. Thus, $(b_1 \vee b_2) \S_f a$. We next show that $S_f$ is compatible. For this we need to show that $S_f \circ {\prec} = S_f = {\prec} \circ S_f$. To see that $S_f \subseteq S_f \circ {\prec}$, let $b \S_f a$. Then there is $a' \in A$ such that $a' \prec a$ and $b \le f(a')$. By (\ref{S7}), there is $a''\in A$ such that $a' \prec a'' \prec a$. 
Therefore, $b \le f(a') \prec f(a'')$, so $f(a'') \S_f a$, and hence $S_f \subseteq S_f \circ {\prec}$.
The other inclusions are proved similarly. Finally, we show that $S_f$ satisfies the two conditions of \cref{l:explicit}.
Condition \eqref{i:total} is immediate from the definition of $S_f$ and the fact that $f(0)=0$. For \eqref{i:deterministic}, let $a_1 \prec a_2$. By (\ref{S7}), there is $a \in A$ such that $a_1 \prec a \prec a_2$. By (\ref{M3}), $\neg f(\neg a_1) \prec f(a)$. Set $b=f(a)$. It is left to show that $\neg b \S_f \neg a_1$ and $b \S_f a_2$. The latter is obvious because $a \prec a_2$ and $b=f(a)$. 
We prove the former. By (\ref{S7}), there is $c \in A$ such that $a_1 \prec c \prec a$. Then $\neg b = \neg f(a) \prec f(\neg c)$ by (\ref{M3}) and (\ref{S6}), and hence $\neg b \le f(\neg c)$ by (\ref{S5}). Moreover, $\neg c \prec \neg a_1$ by (\ref{S6}). Thus, $\neg b \S_f \neg a_1$. This proves that $S_f$ is a morphism in $\devF$.

Let $f \colon A \to B$ and $g \colon B \to C$ be de Vries morphisms.
We prove that $S_{g * f} = S_f \circ S_g$.
To see that $S_f \circ S_g \subseteq S_{g * f}$, let $a \in A$ and $c \in C$ be such that $c \mathrel{(S_f \circ S_g)} a$.
Then there is $b \in B$ such that $c \S_g b \S_f a$.
Since $c \S_g b$, it follows from the definition of $S_g$ that there is $b' \in B$ such that $b' \prec b$ and $c \leq g(b')$.
Also, since $b \S_f a$, there is $a' \in A$ such that $a' \prec a$ and $b \leq f(a')$.
By (\ref{S7}), there is $a'' \in A$ such that $a' \prec a'' \prec a$. Therefore, $c \leq g(b') \leq g(b) \leq gf(a') \leq (g * f)(a'')$.
This proves $c \S_{g * f} a$.
To see that $S_{g * f} \subseteq S_f \circ S_g$, let $a \in A$ and $c \in C$ be such that $c \S_{g * f} a$.
Then there is $a' \in A$ such that $a' \prec a$ and $c \leq (g * f)(a') \le gf(a')$.
By (\ref{S7}), there is $a'' \in A$ such that $a' \prec a'' \prec a$.
Therefore, $f(a') \prec f(a'')$, and so $c \S_g f(a'') \S_f a$. 
Thus, $c \mathrel{(S_f \circ S_g)} a$.

Let $(A, \prec)$ be a de Vries algebra. If $f$ is the identity on $(A, \prec)$ in $\dev$, then $S_f= {\prec}$, and hence it is the identity on $(A, \prec)$ in $\devF$.
\end{proof}

\begin{theorem} \label{thm:iso-dev-devF}
The contravariant functors described in Definition~\ref{d:functors} establish a dual isomorphism between $\devF$ and $\dev$.
\end{theorem}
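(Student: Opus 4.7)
The plan is to verify that the two contravariant functors of \cref{d:functors} are mutually inverse. Since both assignments are the identity on objects by construction, it suffices to show that their compositions act as the identity on morphisms; namely, $S_{f_S} = S$ for every morphism $S \colon A \to B$ of $\devF$, and $f_{S_f} = f$ for every de Vries morphism $f \colon A \to B$. Functoriality of each assignment has already been verified in the two preceding lemmas, so establishing these two identities will complete the proof of the dual isomorphism.

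I would first dispatch $f_{S_f} = f$, which is the easier direction. Unfolding the definitions, $f_{S_f}(a)$ is the supremum of those $b \in B$ such that $b \le f(a')$ for some $a' \prec a$. Every such $b$ is bounded above by some $f(a')$ with $a' \prec a$, and conversely each such $f(a')$ itself belongs to the set. Hence the supremum equals $\bigvee \{ f(a') \mid a' \prec a \}$, which is $f(a)$ by (\ref{M4}).

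The identity $S_{f_S} = S$ is the substantive step. Unfolding definitions, $a \S_{f_S} b$ means there is $b' \in B$ with $b' \prec b$ and $a \le f_S(b') = \bigvee S^{-1}[b']$. For the inclusion $S_{f_S} \subseteq S$, given such a $b'$ I would apply \cref{lem:space-dev} to $b' \prec b$ to obtain $f_S(b') \S b$, and then combine this with $a \le f_S(b')$ via (\ref{S4}) to conclude $a \S b$. For the reverse inclusion $S \subseteq S_{f_S}$, starting from $a \S b$ I would invoke the compatibility of $S$ --- namely the identity $S = {\prec} \circ S$ --- to interpolate an element $b' \in B$ with $a \S b'$ and $b' \prec b$; then $a \in S^{-1}[b']$ forces $a \le f_S(b')$, and together with $b' \prec b$ this gives $a \S_{f_S} b$.

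The main conceptual input is \cref{lem:space-dev}, which is precisely what allows one to pass from the join defining $f_S$ back into the subordination $S$; the other key ingredient is the compatibility of $S$, used to produce the intermediate element $b'$. Once these two inputs are in hand, no further obstacle arises, and the whole argument amounts to a short and clean verification rather than requiring a new idea.
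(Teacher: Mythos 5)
Your proposal is correct and follows essentially the same route as the paper's proof: both directions of $S_{f_S}=S$ are handled exactly as in the paper (Lemma~\ref{lem:space-dev} plus (\ref{S4}) for one inclusion, compatibility $S = {\prec}\circ S$ for the other), and the verification of $f_{S_f}=f$ via (\ref{M4}) matches as well. No gaps.
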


\begin{proof}
It is sufficient to show that for each morphism $S \colon A \to B$ in $\devF$ we have $S_{f_S} = S$, and that for each morphism $f \colon A \to B$ in $\dev$ we have $f_{S_f} = f$. 

Let $S \colon A \to B$ be a morphism in $\devF$.
Suppose $a \in A$, $b \in B$, and $a \S_{f_S} b$.
Then there is $b' \in B$ such that $b' \prec b$ and $a \leq f_S(b')$.
By Lemma~\ref{lem:space-dev}, $a \leq f_S(b') \S b$,
so $a \S b$.
This proves $S_{f_S} \subseteq S$.
For the other inclusion, suppose $a \in A$, $b \in B$, and $a \S b$.
Since $S$ is compatible, there is $b' \in B$ such that $a \S b' \prec b$.
We have $a \leq \bigvee\{c \in A \mid c \S b'\} = f_S(b')$.
Therefore, the element $b'$ witnesses that we have $a \S_{f_S} b$.
Thus, $S \subseteq S_{f_S}$, and hence $S_{f_S} = S$.

Let $f \colon A \to B$ be a de Vries morphism. For $a\in A$ we have $f_{S_f}(a) = \bigvee \{ b \in B \mid b \S_f a \}$. Also, $f(a) = \bigvee \{ f(a') \mid a' \prec a \}$ by (\ref{M4}). If $a'\prec a$, then $f(a') \prec f(a)$, so $f(a') \S_f a$. Therefore, $f(a')$ is one of the $b\in B$ such that $b \S_f a$, and hence $f(a) \le f_{S_f}(a)$. On the other hand, if $b \S_f a$, then  there is $a'\in A$ such that $a'\prec a$ and $b \le f(a')$. Therefore, $b\le f(a')\prec f(a)$, and so $f_{S_f}(a)\le f(a)$. Thus, $f_{S_f}(a)=f(a)$, and hence $f_{S_f} = f$.
\end{proof}

\begin{remark}
Combining Theorems~\ref{t:all-f} and~\ref{thm:iso-dev-devF} yields de Vries duality. Consequently, all the categories that appear in Theorem~\ref{t:all-f} are dually equivalent to $\dev$.
\end{remark}

\section*{Acknowledgements}

We thank Sergio Celani for drawing our attention to \cite{Cel18}.
We are also thankful to the referee for suggesting the machinery of order-enriched categories, which has considerably shortened our proofs. \cref{rem:equiv from exact completion,rem:proof exact completion} were inspired by the referee.

Marco Abbadini and Luca Carai were supported by the Italian Ministry of University and Research through the PRIN project n.\ 20173WKCM5 \emph{Theory and applications of resource sensitive logics}.
Luca Carai acknowledges partial support from the Juan de la Cierva-Formaci\'on 2021 programme (FJC2021-046977-I) funded by MCIN/AEI/10.13039/501100011033 and by the European Union ``NextGenerationEU''/PRTR.

\providecommand{\bysame}{\leavevmode\hbox to3em{\hrulefill}\thinspace}
\providecommand{\MR}{\relax\ifhmode\unskip\space\fi MR }
\providecommand{\MRhref}[2]{%
  \href{http://www.ams.org/mathscinet-getitem?mr=#1}{#2}
}
\providecommand{\href}[2]{#2}

\end{document}